\documentclass[11pt]{amsart}
\usepackage{rlepsf,  
graphicx,
 amssymb, epstopdf, 
amsmath, amssymb,
psfrag, pinlabel, rotating,hyperref
}

\usepackage{fullpage}
\usepackage[active]{srcltx}

\theoremstyle{plain}
\newtheorem{thm}{Theorem}[subsection]
\newtheorem{cor}[thm]{Corollary}

\newtheorem{prop}[thm]{Proposition}
\newtheorem{lemma}[thm]{Lemma}

\newtheorem*{thmC}{Theorem~\ref{thm:bigdepth}}
\newtheorem*{thmD}{Theorem~\ref{thm:tensionone}}

\newtheorem*{thmF}{Proposition~\ref{prop:mintbLOSS}}
\newtheorem*{thmG}{Theorem~\ref{thm:depth1binding}}
\newtheorem*{thmH}{Theorem~\ref{thm:rationalbennquinineq}}
\newtheorem*{thmI}{Theorem~\ref{thm:rationalselflinking}}

\theoremstyle{definition}
\newtheorem{defn}[thm]{Definition}
\newtheorem{remark}[thm]{Remark}

\newtheorem{example}[thm]{Example}

\theoremstyle{definition}
\newtheorem{question}{Question}[section]
\newtheorem{problem}[question]{Problem}

\newcommand{\calN}{\ensuremath{{\mathcal N}}}

\newcommand{\calK}{\ensuremath{{\mathcal K}}}

\newcommand{\comment}[1]{}
\newcommand{\lk}{\ell k}

\newcommand{\bdry}{\ensuremath{\partial}}

\DeclareMathOperator{\nbhd}{\calN}

\newcommand{\N}{\ensuremath{\mathbb{N}}}
\newcommand{\Q}{\ensuremath{\mathbb{Q}}}
\newcommand{\R}{\ensuremath{\mathbb{R}}}

\newcommand{\frL}{\ensuremath{\mathfrak{L}}}

\newcommand{\tw}{\ensuremath{{\mbox{\tt tw}}}}
\newcommand{\tb}{\ensuremath{{\mbox{\tt tb}}}}
\newcommand{\rot}{\ensuremath{{\mbox{\tt rot}}}}
\renewcommand{\sl}{\ensuremath{{\mbox{\tt sl}}}}
\newcommand{\std}{\ensuremath{{\mbox{\small{\tt std}}}}}
\newcommand{\cyl}{\ensuremath{{\mbox{\small{\tt cyl}}}}}

\newcommand{\ort}{\ensuremath{{\mbox{\small{\tt o}}}}}
\newcommand{\OT}{\ensuremath{{\mbox{\tiny{\tt OT}}}}}

\setcounter{tocdepth}{3}
\begin{document}
\title{Non-looseness of non-loose knots}
\author{Kenneth L.\ Baker}
\address{
Department of Mathematics,
University of Miami,
Coral Gables, FL 33124-4250, USA}
\email{k.baker@math.miami.edu}

\author{S\.{i}nem Onaran}
\address{
Department of Mathematics, 
Hacettepe University, 
06800 Beytepe-Ankara, Turkey}
\email{sonaran@hacettepe.edu.tr}

\begin{abstract}
A Legendrian or transverse knot in an overtwisted contact $3$--manifold is non-loose if its complement is tight and loose if its complement is overtwisted.  We define three measures of the extent of non-looseness of a non-loose knot and show they are distinct.
\end{abstract}

\maketitle


\section{Introduction}

A contact $3$--manifold $(M,\xi)$ is {\em overtwisted} if it contains an overtwisted disk and {\em tight} otherwise.
A Legendrian or transverse knot $K$ in an overtwisted contact $3$--manifold is called {\em non-loose}  (or {\em exceptional} as in \cite{EF}) if the restriction of the contact manifold to its complement is tight.  If instead the complement is overtwisted, then $K$ is called {\em loose}.   That is, $K$ is non-loose if it intersects every overtwisted disk, while $K$ is loose if it is disjoint from some overtwisted disk.   In this article we develop and examine notions of the extent of non-looseness of Legendrian and transverse knots in overtwisted contact structures.  Throughout, our ambient $3$--manifolds will be closed, compact, connected, and oriented and our contact structures will be co-oriented.

For an unoriented Legendrian knot $L$ in a closed overtwisted contact $3$--manifold $(M,\xi)$ we define three invariants, two geometric and one algebraic.  
\begin{itemize}
\item The {\em depth} of $L$, $d(L)$, is the minimum of $|L\cap D|$ over all overtwisted disks $D$ in $M$.  
\item The {\em tension} of $L$, $t(L)$, is the minimum number of stabilizations required to loosen $K$. 
\item The {\em order} of $L$, $\bar{o}(L)$, is the sum of the orders of the $U$--torsion of the LOSS invariant $\mathfrak{L}$  (defined in \cite{LOSS}) of the two orientations on $L$.  Presently this is only defined when $L$ is null-homologous.

\end{itemize}
 If $L$ is loose, then all three of these are $0$.  If $L$ is non-loose then both its depth and the tension are non-zero by definition, though its order may be $0$.

\begin{thm}\label{thm:mainineq}
If $L$ is a Legendrian knot in an overtwisted contact $3$--manifold, then
\[\bar{o}(L) \leq t(L) \leq d(L)\]
where we only consider $\bar{o}(L)$ if $L$ is null homologous. 
\end{thm}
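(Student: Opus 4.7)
The plan is to prove the two inequalities separately, treating $t(L) \leq d(L)$ first via a geometric stabilization-off-an-overtwisted-disk argument, and $\bar{o}(L) \leq t(L)$ second via the stabilization behavior of the LOSS invariant. Both inequalities are proved as ``realization'' statements: given a configuration witnessing the right-hand side, I produce a sequence of stabilizations (respectively a chain of $U$-multiplications on $\mathfrak{L}$) of the appropriate length.

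For $t(L) \leq d(L)$, I take an overtwisted disk $D$ realizing the depth, so that $|L \cap D| = d(L)$, and may assume all intersections are transverse. At each puncture point $p \in L \cap D$, the key local lemma I would prove is that a single stabilization of $L$ supported in an arbitrarily small neighborhood of $p$ suffices to let the arc of $L$ through $p$ be Legendrian-isotoped off $D$, without creating new intersections. Carrying this out at all $d(L)$ punctures in disjoint neighborhoods produces a Legendrian $L'$ obtained from $L$ by exactly $d(L)$ stabilizations whose complement still contains the now-untouched overtwisted disk $D$. Hence $L'$ is loose and $t(L) \leq d(L)$ by definition of tension.

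For $\bar{o}(L) \leq t(L)$, I rely on the stabilization formulas for the LOSS invariant established in \cite{LOSS}: for a given orientation $o$ of $L$, a positive stabilization with respect to $o$ sends $\mathfrak{L}(L,o)$ to $U \cdot \mathfrak{L}(L,o)$, while a negative stabilization with respect to $o$ preserves $\mathfrak{L}(L,o)$. An unoriented stabilization $S$ of $L$ is positive with respect to one of the two orientations of $L$ and negative with respect to the other, so $S$ multiplies exactly one of the two LOSS classes $\mathfrak{L}(L,o_1), \mathfrak{L}(L,o_2)$ by $U$ and leaves the other unchanged. Multiplication by $U$ drops the $U$-torsion order by at most one, so the sum $\bar{o}$ decreases by at most one per stabilization. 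Performing $t(L)$ stabilizations renders $L$ loose, at which point (as noted in the setup) both LOSS classes vanish and $\bar{o} = 0$; iterating the one-step bound gives $\bar{o}(L) \leq t(L)$.

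The main obstacle will be the local geometric lemma used for the first inequality: a single stabilization really suffices to remove one transverse intersection of a Legendrian knot with an overtwisted disk, without disturbing the remaining intersections. My plan for this is to put the arc of $L$ through $p$ into a standard Darboux model relative to a small neighborhood of $p$ in $D$, where a stabilization manifestly inserts a zig-zag that Legendrian-isotopes to one side of $D$ within the chosen neighborhood. Once this local model is in place, both inequalities reduce to bookkeeping over $d(L)$ punctures (for the first) and $t(L)$ stabilization steps plus the cited formulas in \cite{LOSS} (for the second).
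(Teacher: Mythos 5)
Your overall decomposition is the same as the paper's: $t(L)\leq d(L)$ is proved by removing the $d(L)$ intersection points with a depth-realizing overtwisted disk one stabilization at a time (the paper's Theorem~\ref{thm:stabtoloose} and Lemma~\ref{lem:stabboundsdepth}), and $\bar{o}(L)\leq t(L)$ follows from the stabilization formulas for $\frL$ (the paper's Lemmas~\ref{lem:stabLOSS} and~\ref{lem:tensionLOSS}). Your treatment of the second inequality is correct and essentially identical to the paper's: a loosening sequence of $t(L)$ stabilizations consists of $a$ positive and $b$ negative ones with respect to a chosen orientation, giving $\frL(L_{+a,-b})=U^{a}\cdot\frL(L)=0$ and hence $o(L)\leq a$ and $o(-L)\leq b$, so $\bar{o}(L)\leq a+b=t(L)$; your step-by-step version is the same computation.

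The gap is in the local lemma you propose for the first inequality. As stated --- a single stabilization supported in an arbitrarily small neighborhood of an interior intersection point $p\in L\cap D$, after which the strand is isotoped off $D$ \emph{within that neighborhood} --- it is false for purely topological reasons: in a small ball $B$ about $p$, the disk $D\cap B$ is properly embedded and separates $B$, so any arc entering one component of $B\setminus D$ and exiting the other must meet $D$, no matter how it is modified rel endpoints inside $B$. No amount of zig-zagging confined near $p$ can remove the intersection; the strand has to be dragged out across $\bdry D$. This is exactly why the paper's proof of Theorem~\ref{thm:stabtoloose} works not in a ball about $p$ but in a neighborhood of a radial arc $\lambda$ running from $p$ out to $\bdry D_{\OT}$ (chosen to miss the origin and the other intersection points), where a contactomorphism takes $D$ to the half-plane $\{y\geq 0,\, z=0\}$ and $L$ to a transversal line; the stabilization of Figure~\ref{fig:legstabtoloosen} then pulls the strand across the edge of the half-plane. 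With that correction your bookkeeping goes through: the arcs $\lambda$ for distinct intersection points can be chosen disjoint, so the $d(L)$ stabilizations neither interfere with one another nor create new intersections, and one also recovers the sign refinement of Remark~\ref{rmk:stabsign}.
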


\begin{proof}
The second inequality is Lemma~\ref{lem:stabboundsdepth}.   When $L$ is also null-homologous, the first inequality is Lemma~\ref{lem:tensionLOSS}.
\end{proof}

Indeed, these three invariants are all distinct.

\begin{thm}\label{thm:tensionlessthandepth}
There exist non-loose Legendrian knots $L$ with \[1=t(L)<d(L).\]
\end{thm}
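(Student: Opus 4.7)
The plan is to exhibit an explicit non-loose Legendrian knot $L$ in an overtwisted contact $3$--manifold for which $t(L)=1$ and $d(L)\ge 2$.

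First I would construct a candidate $L$ sitting near a model overtwisted disk that meets it in exactly two points. A convenient way is to start from a tight $(M,\xi_0)$ containing a transverse knot $T$, perform a single Lutz twist along $T$ to obtain an overtwisted structure $\xi_1$ agreeing with $\xi_0$ outside a tubular neighborhood of $T$, and take $L$ to be a Legendrian approximation of $T$ in $(M,\xi_1)$. The complement of $L$ then inherits the original tight structure, so $L$ is non-loose, while the Lutz tube contains an overtwisted disk $D$ meeting $L$ transversely in two points of opposite signs. Contact-surgery models or open-book constructions with a designated binding component offer analogous flexibility should this specific model prove inconvenient.

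Given such $L$ and $D$, the upper bound $t(L)\le 1$ follows from the standard stabilization--intersection tradeoff underlying Lemma~\ref{lem:stabboundsdepth}: a single stabilization of $L$ along an arc on $D$ joining its two oppositely-signed intersection points removes both intersections at once and exhibits the stabilization as loose. Combined with the non-looseness of $L$, this forces $t(L)=1$.

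The main obstacle is then the lower bound $d(L)\ge 2$: one must rule out \emph{any} overtwisted disk $D'\subset M$ with $|D'\cap L|=1$. My primary approach would be a homological parity argument: the algebraic intersection $D'\cdot L$ equals $\lk(\bdry D',L)$, so it suffices to ensure that $\bdry D'$ is null-homologous in $M\cut L$ for every overtwisted $D'$, forcing the geometric intersection to be even. I would try to arrange this either (a) by choosing the construction so that the class of $\bdry D'$ in $H_1(M\cut L)$ is rigidly controlled by the class of $L$ together with the Lutz tube, or (b) by using convex surface theory on the tight complement $M\cut\nbhd(L)$ to constrain the characteristic foliation of a putative $D'$ with a single intersection. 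If a purely homological route fails for the chosen $L$, the fallback is a direct case analysis classifying possible overtwisted disks meeting $L$ once and ruling them out via the tight structure on the complement. This parity/classification step is where I expect the real work to lie.
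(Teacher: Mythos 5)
There are genuine gaps in all three stages of your plan, and the paper's route is quite different. On the construction: after a Lutz twist along $T$, the complement of a Legendrian approximation $L$ of $T$ does \emph{not} ``inherit the original tight structure'' --- the Lutz modification is performed on the whole tube, essentially all of which survives in $M\setminus L$, so tightness of the complement is a nontrivial claim requiring proof. Moreover the standard overtwisted disks in a Lutz tube are meridional-type disks through the core, which a Legendrian approximation of the core meets algebraically once (its boundary links $L$ once), so the picture of ``two points of opposite signs'' is not what that model produces. On the upper bound: the loosening move of Theorem~\ref{thm:stabtoloose} and Remark~\ref{rmk:stabsign} removes \emph{one} intersection point per stabilization, and oppositely-signed intersections require stabilizations of opposite signs; a single stabilization sweeping away both points at once is not a move you have available, so $t(L)\le 1$ does not follow from your setup (it would give $t(L)\le 2$ at best).

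The most serious problem is the lower bound $d(L)\ge 2$, which you correctly identify as the crux. The homological parity argument cannot work: in $H_1(M\setminus L)$ one has $[\bdry D'] = (D'\cdot L)\,[\mu]$ where $\mu$ is the meridian, so ``$\bdry D'$ is null-homologous in $M\setminus L$ for every overtwisted disk'' is literally equivalent to ``every overtwisted disk has zero algebraic intersection with $L$'' --- it restates the conclusion rather than providing a mechanism, and nothing constrains that coefficient a priori (indeed non-loose unknots are null-homologous and meet an overtwisted disk exactly once by Lemma~\ref{lem:nonlooseunknotdt}, so no general parity obstruction exists). The paper's actual argument is convex-surface-theoretic: Theorem~\ref{thm:bigdepth} shows that if $L^*$ is the dual of $(+1)$--surgery on a knot $L$ with tight complement, then $d(L^*)=1$ forces $L$ to be a stabilization, because the once-punctured overtwisted disk becomes, after surgering back, an annulus with twisting $0$ and $+1$ on its two boundary components whose dividing set contains a boundary-parallel arc. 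Taking $L$ non-destabilizable therefore gives $d(L^*)\ge 2$. The bound $t(L^*)=1$ is then obtained not by an intersection count but by computing $\tb_\Q$ and $\rot_\Q$ of one positive stabilization of $L^*$ and showing they violate the rational Bennequin inequality of Theorem~\ref{thm:rationalbennquinineq}, so that stabilization is loose. Explicit examples are maximal-$\tb$ negative torus knots with suitably negative rotation number, whose $(+1)$--surgeries are overtwisted by Lisca--Stipsicz. If you want to salvage your plan, you would need to replace the parity step with an argument of this convex-surface type and the one-stabilization step with a classical-invariant obstruction.
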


\begin{proof}[Sketch of Proof]
The following two theorems give a surgery characterization of non-loose Legendrian knots with depth $1$ and a surgery construction of non-loose Legendrian knots with tension $1$.  In particular, if $(+1)$--surgery on  a non-destabilizable Legendrian knot $L$ in $(S^3,\xi_\std)$ satisfying $\tb(L) \leq -2$, $\rot(L)<0$, $-\chi (L) < -(\rot(L)+\tb(L)+2)$ is overtwisted, then the surgery dual is a non-loose Legendrian knot $L^*$ with $1=t(L^*)<d(L^*)$. Etnyre-Honda \cite{etnyrehonda-torus} show there is a Legendrian torus knot satisfying these classical constraints that, according to Lisca-Stipsicz \cite{liscastipsicz}, has an overtwisted $(+1)$--surgery. A more detailed proof is given after the proof of Theorem~\ref{thm:tensionone}.
\end{proof}

\begin{thmC}
Suppose $(+1)$--surgery on a Legendrian knot $L$ with tight complement yields an overtwisted manifold with surgery dual knot $L^*$.  Then $d(L^*)=1$ if and only if $L$ is a stabilization.
\end{thmC}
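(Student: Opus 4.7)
Write $M$ for the ambient contact manifold and $M'$ for the result of contact $(+1)$-surgery on $L$, so $L^* \subset M'$ is the surgery dual. The complements $X := M \setminus N(L)$ and $M' \setminus N(L^*)$ are canonically identified as contact manifolds; since this complement is tight by hypothesis, $L^*$ is also non-loose in $M'$ and $d(L^*) \geq 1$.

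For the forward direction, the plan is to produce an overtwisted disk locally. When $L = S_\pm(L_0)$, the destabilization is witnessed by a convex ``basic slice'' thickened torus in $X$ bounded by $T := \partial N(L)$ (dividing slope $\tb(L)$) and a parallel convex torus $T_0$ (dividing slope $\tb(L)+1$). Inside this basic slice one locates a convex annulus $A$ connecting a meridian of $L^*$ on $T$ to a Legendrian curve $\gamma$, with $\tb(\gamma, A) = 0$. Capping $A$ off with the meridional disk of $L^*$ in $M'$ then yields an overtwisted disk intersecting $L^*$ transversely in one point, so $d(L^*) \leq 1$.

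For the converse, let $D \subset M'$ be an overtwisted disk with $|D \cap L^*| = 1$ transverse. Isotope $D$ to meet $\partial N(L^*)$ transversely, then use innermost-disk arguments in the tight $X$ to reduce $D \cap N(L^*)$ to a single meridional disk $D_1$. The piece $D' := \overline{D \setminus D_1}$ is a properly embedded annulus in $X$ with Legendrian boundary $\partial D$ on one side and the meridian $\mu^*$ of $L^*$ on $T$ on the other. Make $T$ and $D'$ convex with $\partial D'$ Legendrian. The overtwisted disk condition $\tb(\partial D, D) = 0$ forces no dividing arcs of $\Gamma_{D'}$ to meet $\partial D$, and tightness of $X$ forbids closed dividing curves, so $\Gamma_{D'}$ consists only of arcs with both endpoints on $\mu^*$, necessarily boundary-parallel. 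At least one such arc cuts off a half-disk that provides a bypass for $T$ from the $X$-side; choosing this bypass appropriately, its attachment changes the dividing slope of $T$ to $\tb(L)+1$, yielding a convex torus bounding an enlarged standard neighborhood of $L$ inside which one locates a Legendrian $L_0$ with $\tb(L_0) = \tb(L)+1$. Thus $L$ is a stabilization of $L_0$.

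The main obstacle is the reverse direction's convex surface analysis: one must deduce the combinatorics of $\Gamma_{D'}$ from the boundary data and then identify a boundary-parallel bypass whose attaching arc on $T$ genuinely destabilizes rather than merely thickening or, worse, forming a ``trivial'' bypass. Tightness of $X$ is used throughout, both to forbid closed dividing curves on $D'$ and to rule out pathological configurations that would obstruct the destabilization.
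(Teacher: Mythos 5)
Your argument is correct in outline, and your forward direction is essentially the paper's: the paper takes the annulus $A$ between $L$ and a Legendrian push-off $L'$ of its destabilization, with $\tw_\xi(L',A)=0$ and $\tw_\xi(L,A)=+1$, and caps it with a meridional disk of the surgery solid torus to produce an overtwisted disk met once by $L^*$ (following Lemma~3.1 of \cite{liscastipsicz}); your basic-slice annulus is this same object. Where you genuinely diverge is the converse. Rather than staying in the complement $X$ with the meridian $\mu^*$ as a boundary component, the paper performs $(-1)$--surgery on $L^*$ to return to $(M,\xi)$, so that $D$ minus the meridional disk becomes an annulus with Legendrian boundary $\bdry D \cup L$ --- with $L$ itself on one side --- satisfying $\tw_\xi(\bdry D,A)=0$ and $\tw_\xi(L,A)=+1$. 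Making this annulus convex forces a single dividing arc with both endpoints on $L$; such an arc is automatically boundary-parallel in an annulus, and a boundary-parallel dividing arc meeting $L$ exhibits $L$ as a stabilization directly (Lemma~2.20 of \cite{etnyrelegandtrans}). This buys a real simplification over your route: no bypass attachment, no Twist Number Lemma slope computation, no need to rule out trivial bypasses, and no appeal to the thickened-neighborhood characterization of destabilization --- precisely the obstacles you flag at the end. Your route should still go through, but two details need care. First, tightness of $X$ forbids only homotopically trivial closed dividing curves on $D'$ (Giroux's criterion), so closed curves parallel to the core of the annulus may well be present; they are harmless, since the two points of $\mu^*\cap\Gamma_{D'}$ against the zero points on $\bdry D$ still force a boundary-parallel arc based at $\mu^*$, but your blanket exclusion is not justified as stated. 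Second, $\mu^*$ meets the two dividing curves of $\bdry N(L)$ in only two points, so extending the half-disk's boundary arc to an attaching arc crossing $\Gamma_{\bdry N(L)}$ three times wraps most of the way around $\mu^*$; the non-triviality of the resulting bypass and the claim that attachment moves the dividing slope to the $\tb(L)+1$ framing deserve an explicit verification.
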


\begin{thmD}
Suppose $(+1)$--surgery on a Legendrian knot $L$ in $(S^3,\xi_\std)$ yields an overtwisted manifold with surgery dual knot $L^*$.  If  $\tb(L) < -1$, $\rot(L)<0$, and $\tb(L) + \rot(L)+2 < \chi(L)$, then $t(L^*) = 1$.
\end{thmD}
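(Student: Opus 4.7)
The plan is to establish $t(L^*)\geq 1$ and $t(L^*)\leq 1$ separately.

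For the lower bound, the complement of $L^*$ in $(M,\xi)$ is canonically identified, as a contact manifold, with the complement of $L$ in $(S^3,\xi_\std)$, since only the surgery solid torus has been altered. As a contact submanifold of the tight $(S^3,\xi_\std)$, this complement is tight, so $L^*$ is non-loose and $t(L^*)\geq 1$.

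The essential content is the upper bound. The plan is to exhibit an overtwisted disk in the complement of a single stabilization of $L^*$. Since $\rot(L)<0$, I would try the negative stabilization $S_-(L^*)$.

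The approach is to manipulate the contact surgery diagram by contact Kirby calculus. Beginning with the $(+1)$--contact surgery description of $(M,\xi)$, I would introduce a cancelling pair of contact $(\pm 1)$--surgeries on a Legendrian unknot linked once with $L$, which does not change $(M,\xi)$. A sequence of Legendrian handle slides then transforms the presentation into one in which the Legendrian knot carrying the $(+1)$--surgery is a stabilized Legendrian knot $\widetilde L = S_-(L_0)$ for some auxiliary $L_0 \subset (S^3, \xi_\std)$, with the new surgery dual corresponding to $S_-(L^*)$ in the original diagram. It is a standard consequence of the contact surgery framework that contact $(+1)$--surgery on a stabilized Legendrian knot produces an overtwisted contact manifold in which the surgery dual is loose, the overtwisted disk being explicitly provided by the stabilizing zigzag. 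Pulling this overtwisted disk back through the handle moves then yields an overtwisted disk in $(M,\xi)$ disjoint from $S_-(L^*)$, and thus $t(L^*)\leq 1$.

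The main obstacle is to carry out these handle moves and to identify the transformed surgery dual with $S_-(L^*)$. The hypotheses enter precisely here. Writing $\chi(L) = 1 - 2g(L)$, the condition $\tb(L) + \rot(L) + 2 < \chi(L)$ becomes $\tb(L) + \rot(L) < -2g(L) - 1$, which measures how strictly the pair $(\tb(L), \rot(L))$ lies below the Bennequin line. This slack is what should allow the required Legendrian handle slides to be realized, while the conditions $\tb(L)<-1$ and $\rot(L)<0$ determine the sign of the stabilization used for both $\widetilde L$ and $S_-(L^*)$. The delicate step will be the classical-invariant bookkeeping through the handle moves that certifies $\widetilde L$ is genuinely a stabilization and that the new dual is $S_-(L^*)$.
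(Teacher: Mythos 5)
Your lower bound is fine, but the upper bound rests on a claim that is false. You assert that contact $(+1)$--surgery on a stabilized Legendrian knot yields an overtwisted manifold in which the surgery dual is \emph{loose}, with the overtwisted disk supplied by the stabilizing zigzag. The Lisca--Stipsicz disk is the stabilization annulus capped off by a meridian disk of the surgery solid torus, and the surgery dual intersects it exactly \emph{once}; this is precisely the content of Theorem~\ref{thm:bigdepth}, which says the dual has depth $1$ and is therefore non-loose whenever the surgered knot has tight complement. So even if your Kirby-calculus manipulation could be carried out and the transformed dual identified with a stabilization of $L^*$, the disk you pull back would meet that stabilization once rather than miss it, and you would have proved nothing about looseness. (Separately, it is far from clear that $S_-(L^*)$, which is not the core of the surgery torus, can be realized as the surgery dual of any transformed $(+1)$--diagram; and the hypotheses on $\tb$, $\rot$, $\chi$ never actually enter your argument except as unquantified ``slack.'')

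The sign of your stabilization is also the wrong one for these hypotheses. The paper's proof is indirect: it computes the rational classical invariants of the \emph{positive} stabilization $L^*_+$ (presented by $L\cup L'_+$ with $(+1)$--surgery on $L$, where $L'$ is a Legendrian push-off) via the surgery formulas of Geiges--Onaran, obtaining $\tb_\Q(L^*_+)=\tfrac{-1}{\tb(L)+1}$ and $\rot_\Q(L^*_+)=\tfrac{\rot(L)+\tb(L)+1}{\tb(L)+1}$, and shows that non-looseness of $L^*_+$ would force $\rot(L)+\tb(L)+2\geq\chi(L)$ via the rational Bennequin inequality of Theorem~\ref{thm:rationalbennquinineq}, contradicting the hypothesis; hence $L^*_+$ is loose and $t(L^*)=1$. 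The conditions $\rot(L)<0$ and $\tb(L)+1<0$ are exactly what make the signs in that computation work out, and they do so for the positive stabilization only: for instance with $\tb(L)=-2$, $\rot(L)=-1$ one finds $\rot_\Q(L^*_-)=0$ and $|\tb_\Q(L^*_-)|=1$, so $L^*_-$ satisfies the inequality and no contradiction arises. (Compare also Proposition~\ref{prop:tensionrefinement}, where it is \emph{negative} stabilizations that loosen duals of \emph{positively} stabilized knots --- the pairing of signs is opposite to your guess.) To repair your write-up along the paper's lines, replace the explicit-disk construction by the obstruction argument: stabilize positively, compute $\tb_\Q$ and $\rot_\Q$ of the stabilized dual from the surgery diagram, and contradict Theorem~\ref{thm:rationalbennquinineq}.
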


\begin{thm}\label{thm:unknotorder}
A non-loose Legendrian unknot $L$ satisfies \[\bar{o}(L)=0 \mbox{ while } t(L)=d(L)=1.\]
\end{thm}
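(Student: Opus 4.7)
The plan is to dispatch $\bar{o}(L) = 0$ from the algebraic structure of $HFK^-$ of a topological unknot, to construct an overtwisted disk of depth one using the Lutz-twist model of non-loose Legendrian unknots, and then to read $t(L) = 1$ off Theorem~\ref{thm:mainineq}. The depth calculation is the main substantive step.

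The equality $\bar{o}(L) = 0$ is immediate from the fact that $L$ is a topological unknot in $S^3$: the knot Floer homology $HFK^-(-S^3, L)$ is isomorphic to the free $\mathbb{F}[U]$--module $\mathbb{F}[U]$, which has no $U$--torsion. The LOSS invariant of either orientation of $L$ lies in this torsion-free module, so its $U$--torsion is trivial and both summands contributing to $\bar{o}(L)$ vanish.

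For $d(L) = 1$, non-looseness yields $d(L) \geq 1$. For the reverse inequality, I would invoke the classification of non-loose Legendrian unknots due to Eliashberg--Fraser, with subsequent refinements by Etnyre and by Geiges--Onaran: up to Legendrian isotopy and iterated stabilization, every non-loose Legendrian unknot is a Legendrian approximation of the transverse core $T$ of a solid torus on which a full Lutz twist has been performed inside an otherwise standard tight $(S^3, \xi_\std)$. The Lutz twist produces an overtwisted disk $D$ in the twisted solid torus meeting $T$ transversely in a single point. Since any sufficiently small Legendrian approximation $L_0$ of $T$ is $C^0$-close to $T$, a local isotopy of $D$ arranges $|L_0 \cap D| = 1$; subsequent stabilizations, being supported in Darboux balls, can be taken disjoint from a parallel copy of $D$. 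Thus $d(L) \leq 1$, and combining with Theorem~\ref{thm:mainineq} and non-looseness we obtain $1 \leq t(L) \leq d(L) = 1$.

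The hard part is the depth bound: arranging the Lutz-twist overtwisted disk to intersect $L$ (rather than merely its transverse core $T$) transversely in exactly one point requires careful choice of the Legendrian approximation and of the subsequent stabilizing Darboux balls, in order to prevent extra intersections from arising across all values of $(\tb, \rot)$ allowed by the classification.
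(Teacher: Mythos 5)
Your proposal has two genuine gaps, the more serious of which is exactly the step you flag as ``the hard part.'' The depth argument runs in the wrong direction along the classification. A ``sufficiently small'' Legendrian approximation $L_0$ of the transverse core $T$ of the Lutz tube is, by construction, a highly negatively stabilized representative: as the concentric torus shrinks, $\tb(L_0)\to-\infty$, and by Theorem~\ref{thm:EF} any Legendrian unknot with $\tb\leq 0$ is already loose (consistently, Corollary~\ref{cor:finiteloosen}(4) says the transverse unknot $T$ itself is loose). The non-loose unknots you must handle are those with $(\tb,\rot)=(n,\pm(n-1))$ for \emph{all} $n>0$, and these are obtained from your $C^0$-close approximations by \emph{de}stabilization, not stabilization. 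Destabilization is not supported in a Darboux ball and does not preserve $|L\cap D|$, so the intersection count cannot be propagated from $L_0$ to the rest of the family; your closing sentence concedes this but does not supply the missing argument. The paper closes this gap by a completely different mechanism: Plamenevskaya's surgery presentations exhibit each non-loose unknot as the surgery dual to $(+1)$--surgery on a \emph{stabilized} Legendrian unknot, whereupon Theorem~\ref{thm:bigdepth} gives $d=1$ exactly (the push-off annulus of the destabilization caps off to an overtwisted disk meeting the dual once).

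The second gap is generality of the ambient manifold. The theorem, via Lemma~\ref{lem:nonlooseunknotdt} and Corollary~\ref{cor:unknotorder}, concerns non-loose unknots in an arbitrary overtwisted $(M,\xi)$, which the paper reduces to $(S^3,\xi_{-1})$ through the connected-sum decomposition of Corollary~\ref{cor:nonlooseunknot}; your argument is pinned to $S^3$ throughout. This matters most for the order: $HFK^-(-M,L)$ for an unknot in a general $M$ contains the reduced Floer homology of $M$ and need not be a free $\mathbb{F}[U]$--module, so the torsion-freeness argument does not transfer. (In $S^3$ your computation is fine and is arguably slicker than the paper's, granted the fact from \cite{LOSS} that $\frL$ is a $U$--torsion class when $\xi$ is overtwisted --- without that input, a nonzero class in a free module would have infinite order, not order zero.) The paper instead derives $\bar{o}(L)=0$ from Proposition~\ref{prop:mintbLOSS}: Theorem~\ref{thm:EF} gives the lower bound $\tb\geq 1$ on non-loose Legendrian unknots in any overtwisted manifold, so sufficiently many negative stabilizations loosen $L$ without changing $\frL$, forcing $\frL(L)=0$. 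That argument is manifold-independent and avoids any computation of $HFK^-$.
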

\begin{proof}
Lemma~\ref{lem:nonlooseunknotdt} shows any non-loose Legendrian unknot has depth $1$ and hence, by Theorem~\ref{thm:mainineq} (or just Lemma~\ref{lem:stabboundsdepth}), also tension $1$.  Corollary~\ref{cor:unknotorder} shows they all have order $0$.
\end{proof}

The proof above relies upon Theorem~\ref{thm:EF}, the characterization of non-loose unknots in $S^3$ of Eliashberg-Fraser \cite{EF}, and Corollary~\ref{cor:nonlooseunknot}, its implication for other manifolds.  The key step to Lemma~\ref{lem:nonlooseunknotdt} is the application of Theorem~\ref{thm:bigdepth} to the surgery diagrams given by Plamenevskaya \cite{plamenevskaya} of these non-loose unknots in $S^3$. Corollary~\ref{cor:unknotorder} is a consequence of the following proposition which exploits the behavior of the LOSS invariant under stabilizations.   Here we state it for just $\bar{o}$, but the actual proposition addresses $\frL$ and related invariants as well.

\begin{thmF}
If a null-homologous knot type $\calK$ has a lower bound on the Thurston-Bennequin numbers of its non-loose Legendrian representatives in a given overtwisted contact structure, then $\bar{o}(L) =0$ for each Legendrian representative $L$.
\end{thmF}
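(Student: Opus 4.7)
The plan is to argue by contradiction: assume that $\calK$ has a lower bound on the Thurston-Bennequin numbers of its non-loose Legendrian representatives in $(M,\xi)$ but that some Legendrian representative $L$ of $\calK$ satisfies $\bar{o}(L)>0$. By the definition of $\bar{o}$, at least one of the two orientations of $L$ must then have LOSS invariant $\frL$ with positive $U$-torsion order; in particular $\frL(L)\neq 0$ for that orientation.

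Next I would invoke the stabilization behavior of $\frL$ from \cite{LOSS}: for a fixed orientation, negative stabilization preserves $\frL$ under the natural identification of the underlying knot Floer homology, while positive stabilization multiplies $\frL$ by $U$. Fixing the orientation of $L$ for which $\frL(L)\neq 0$ and letting $L_k$ denote the $k$-fold negative stabilization of $L$ taken with respect to this orientation, one obtains $\frL(L_k)=\frL(L)\neq 0$ for every $k\geq 0$. Because $\frL$ vanishes on any loose Legendrian knot, each $L_k$ must be non-loose. But $\tb(L_k)=\tb(L)-k$, which contradicts the assumed lower bound once $k$ is sufficiently large. Hence $\bar{o}(L)=0$; in fact, the same argument yields the stronger conclusion $\frL(L)=0$ for both orientations of every non-loose representative of $\calK$, which should correspond to the full version of the proposition hinted at in the excerpt concerning $\frL$ and related invariants.

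The main delicacy is the orientation-dependence of stabilization: the same topological stabilization is negative with respect to one orientation and positive with respect to the opposite one, so iterating will multiply the LOSS of the reversed orientation by ever-higher powers of $U$ and may eventually annihilate it. This is harmless for the argument, because non-vanishing of $\frL$ for a single orientation already forces non-looseness, which is all that is needed to contradict the tb bound. Beyond this, I would need only to verify that the stabilization formulas of \cite{LOSS} continue to apply in an overtwisted ambient manifold; since Legendrian stabilization is a local operation and $\frL$ is defined for any null-homologous Legendrian regardless of whether the ambient $\xi$ is tight or overtwisted, this should transfer without trouble, and I expect it to be the most routine part of the write-up rather than a genuine obstacle.
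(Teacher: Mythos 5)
Your proposal is correct and is essentially the paper's own argument, merely phrased as a proof by contradiction: the paper directly negatively stabilizes $L$ (in each orientation) until $\tb$ drops below the assumed bound, concludes the result is loose, and then uses invariance of $\frL$ under negative stabilization to get $\frL(L)=0$ and hence $o(L)=o(-L)=0$. The ingredients (negative stabilization preserves $\frL$, looseness kills $\frL$, stabilization decreases $\tb$) and the stronger conclusion $\frL(L)=0$ for both orientations are identical in both write-ups.
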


We also consider refinements of the above invariants for oriented Legendrian knot and their analogues for transverse knots.
The binding of an open book is naturally a transverse link in the contact structure supported by the open book. If the open book is a negative Hopf stabilization of another open book, then the binding has depth $1$.
\begin{thmG}
Assume an open book with connected binding is a negative Hopf stabilization.  Then the binding $T$, as the non-loose transverse knot in the overtwisted contact structure the open book supports, has $d(T)=t(T)=1$.
\end{thmG}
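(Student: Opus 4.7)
The plan is to prove $d(T) = t(T) = 1$ by showing (i) $T$ is non-loose, giving $d(T), t(T) \geq 1$; (ii) there exists an overtwisted disk meeting $T$ transversely once, giving $d(T) \leq 1$; and (iii) $t(T) \leq d(T)$ via a transverse analogue of Theorem~\ref{thm:mainineq}. Write the stabilization as $(\Sigma', \phi') = (\Sigma \cup H, \phi \circ D_c^{-1})$, where $H$ is the attached $1$-handle and $c \subset \Sigma'$ is the core circle of the added Hopf band; since $T = \partial \Sigma'$ is connected, $H$ must connect two distinct components $B_1, B_2$ of $\partial \Sigma$ that merge into $T$.

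For non-looseness, the complement $M' \setminus T$ is a mapping torus $\Sigma' \times_{\phi'} S^1$ and the restriction $\xi'|_{M' \setminus T}$ is the canonical compatible contact structure on this mapping torus, which is universally tight by standard properties of open books (built by gluing tight $[0,1]$-invariant pieces $\Sigma' \times [0,1]$ via $\phi'$, with $\Sigma'$ having non-empty boundary). Hence $T$ is non-loose, so $d(T), t(T) \geq 1$.

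For the overtwisted disk, I work in a local neighborhood of $H$ in $M'$. The naive overtwisted disk bounded by a Legendrian realization of the core $c$ has $\lk(c, T) = 0$: pushing $c$ off its page to an adjacent page produces a curve disjoint from the original page, which is a Seifert surface for $T$, so the algebraic intersection is zero; equivalently, the two strands of $T$ crossing $H$ inherit opposite orientations from $\partial \Sigma'$ and cancel in the linking calculation. Thus any disk with $\partial D = c$ must meet $T$ in at least two points. I instead construct an overtwisted disk $D$ whose boundary $\partial D$ is a Legendrian unknot in the local model encircling only one of the two strands of $T$ crossing $H$, so $|\lk(\partial D, T)| = 1$, with $\tb(\partial D) \geq 0$ using the overtwistedness introduced by $D_c^{-1}$. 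Then $|D \cap T| = 1$, giving $d(T) \leq 1$.

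Combining, $d(T) = 1$. The transverse analogue of Lemma~\ref{lem:stabboundsdepth} gives $t(T) \leq d(T) = 1$, and with $t(T) \geq 1$ from non-looseness we conclude $t(T) = 1$. The main obstacle is the construction of $\partial D$ in step (ii): since a standard meridian of a transverse knot has $\tb = -1$, realizing a Legendrian loop around a single strand of $T$ in the $1$-handle neighborhood with $\tb \geq 0$ requires a careful local contact-geometric analysis exploiting the change in contact framing caused by the negative Dehn twist $D_c^{-1}$, which effectively shifts the page framing relative to the Seifert framing of such a meridian in the handle region.
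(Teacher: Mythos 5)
Your outline is sound in steps (i) and (iii), but the heart of the theorem is step (ii), and there you have not actually produced the disk: you explicitly defer ``the main obstacle,'' namely constructing a Legendrian unknot encircling a single strand of $T$ in the handle region with non-negative twisting, to ``a careful local contact-geometric analysis'' that is never carried out. That deferred step is precisely where all the content lies, and it is not routine: a small meridian of a transverse knot has $\tb=-1$, an overtwisted disk requires $\tb(\bdry D)=0$ exactly (not merely $\tb(\bdry D)\geq 0$ --- a Bennequin-violating unknot certifies overtwistedness abstractly but does not hand you an embedded overtwisted disk in a controlled position relative to $T$), and it is not clear how to Legendrian-realize such a curve with the required framing inside a neighborhood of the $1$--handle alone. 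The paper resolves exactly this difficulty by a different device: it first plumbs a \emph{positive} Hopf band onto the stabilized page along a boundary-parallel arc, which leaves the supported contact structure unchanged but replaces the binding by $T\cup\mu$ with $\mu$ a transverse meridian; it then invokes Lemma~\ref{lem:twohopfs}, which exhibits an explicit convex overtwisted disk in the open book $(S^3, H^-\sharp H^+)$ whose boundary is a $0$--framed curve parallel to one boundary component of the pair of pants $H^-\sharp H^+$ (realized via the Legendrian Realization Principle on the convex double page) and whose interior meets each of the other two boundary components once. Transplanting that disk into $(M,\Sigma\sharp_\alpha H^+)$ gives a disk meeting $T$ once (and $\mu$ once), whence $d(T)\leq 1$. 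Note that the paper's disk boundary is \emph{not} a meridian of a single strand, which is why the framing problem you ran into does not arise there.

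Two smaller points. First, your non-looseness argument asserts tightness of the binding complement because it is ``built by gluing tight $[0,1]$--invariant pieces''; gluing tight pieces does not in general yield a tight manifold, so this needs to be replaced by a citation of the actual result that the binding complement of any open book is universally tight (as in Lemma~\ref{lem:nonloosebinding}). Second, your observation that the core $c$ of the negative Hopf band has $\lk(c,T)=0$ is correct and is good motivation, but $c$ need not bound any disk in $M$ at all, so the ``naive overtwisted disk bounded by $c$'' is not generally available even as a foil.
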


Dymara \cite{dymara} and Eliashberg-Fraser \cite{EF} established fundamentals about non-loose knots with a focus on non-loose unknots.   Dymara attributes \'Swi\c{a}towski with a Bennequin type inequality for non-loose Legendrian knots which we recall in Theorem~\ref{thm:bennequinineq}.   In his study of the coarse classification of non-loose Legendrian and transverse knots, Etnyre \cite{etnyreovertwisted} gives the associated Bennequin inequality for non-loose transverse knots, Theorem~\ref{thm:selflinking}.    These two theorems are both for null-homologous knots; in the vein of \cite{BE}, we extend them to rationally null-homologous knots.

\begin{thmH}
For a non-loose rationally null-homologous Legendrian knot $L$ of homological order $r$ with rational Seifert surface $\Sigma$,
\[ -|\tb_\Q(L)| + |\rot_\Q(L)| \leq -\frac{1}{r} \chi(\Sigma). \]
\end{thmH}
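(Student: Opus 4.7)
The plan is to deduce the statement from the null-homologous case (Theorem~\ref{thm:bennequinineq}, \'Swi\c{a}towski) by passing to an $r$-fold cyclic branched cover, in the spirit of the rational extension of the classical Bennequin inequality by Baker-Etnyre \cite{BE}.

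First, I would construct the cover. Let $\pi\colon \tilde M \to M$ be the $r$-fold cyclic cover of $M$ branched along $L$, determined by the homomorphism $H_1(M\setminus L) \to \Z/r$ that sends the meridian of $L$ to a generator. Away from $\tilde L = \pi^{-1}(L)$, pull back $\xi$ and extend smoothly across $\tilde L$ to obtain a contact structure $\tilde \xi$ on $\tilde M$ in which $\tilde L$ is Legendrian. Since $\pi$ restricted to $\tilde M \setminus \tilde L$ is an honest $r$-fold cover of the tight manifold $M\setminus L$, the complement of $\tilde L$ is tight; on the other hand any overtwisted disk in $(M,\xi)$ can be isotoped off of $L$ and then lifted to $\tilde M$, showing $\tilde \xi$ is overtwisted. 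Hence $\tilde L$ is a null-homologous non-loose Legendrian knot in $(\tilde M, \tilde \xi)$.

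Next, I would compare invariants. The rational Seifert surface $\Sigma$ for $L$ lifts to an ordinary Seifert surface $\tilde \Sigma$ of $\tilde L$ of the same Euler characteristic, and the Seifert framing of $\tilde L$ is the pullback of the rational Seifert framing on $L$. Under this identification, $\tb(\tilde L) = r\, \tb_\Q(L)$ and $\rot(\tilde L) = r\, \rot_\Q(L)$, so applying Theorem~\ref{thm:bennequinineq} to $\tilde L$ yields
\[
-|r\, \tb_\Q(L)| + |r\, \rot_\Q(L)| \leq -\chi(\tilde \Sigma) = -\chi(\Sigma),
\]
and dividing through by $r$ produces the claim.

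The main obstacle is the contact branched-cover step: one must verify that the pulled-back contact structure extends smoothly across $\tilde L$ in such a way that $\tilde L$ is Legendrian and its classical Thurston-Bennequin and rotation numbers scale as above. If that construction proves too delicate, a natural alternative is to emulate \cite{BE} directly and run \'Swi\c{a}towski's convex-surface argument on $\Sigma$ inside the tight complement $M\setminus L$, incorporating the $r$-fold multiplicity of $\partial \Sigma$ on $\partial \nbhd(L)$ into the dividing-curve count and the Poincar\'e-Hopf accounting before dividing by $r$.
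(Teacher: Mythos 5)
There is a genuine gap in the main route, in fact two. First, the branched cover you want need not exist: a $\Z/r$ branched cover of $M$ along $L$ that "unwraps" $L$ requires a homomorphism $H_1(M\setminus L)\to\Z/r$ sending the meridian $\mu$ to a generator, and for a rationally null-homologous knot of order $r$ the meridian can fail to admit such a map. Concretely, if $L$ is the core of a Heegaard solid torus in $L(r,1)$ (a rational unknot of order $r$, exactly the kind of example this paper cares about via Geiges--Onaran), then $M\setminus \nbhd(L)$ is a solid torus with $H_1\cong\Z$ generated by its core $c$, and $\mu=\pm r c$; every homomorphism to $\Z/r$ kills $\mu$. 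The natural map given by intersection with $\Sigma$ also sends $\mu$ to $r$, not to a generator. Second, even when the cover exists, the assertion that the cover of the tight manifold $M\setminus L$ is tight is unjustified: tightness does not pull back under covering maps (an overtwisted disk upstairs only pushes down to an immersed disk), and virtually overtwisted tight contact structures are known to exist, e.g.\ on lens spaces. This is precisely the distinction between tight and universally tight, so you would need $M\setminus L$ to be universally tight, which is not part of the hypothesis. The Euler characteristic and framing bookkeeping in the cover ($\chi(\tilde\Sigma)=\chi(\Sigma)$, $\tb(\tilde L)=r\tb_\Q(L)$) would also need a Riemann--Hurwitz--type justification rather than assertion.

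Your fallback sentence points in the direction the paper actually takes, but it is not developed. The paper's proof stays downstairs: it takes $L'=\bdry N\cap\Sigma$, the ``Seifert cable'' of $L$ realized as ruling curves (or Legendrian divides) on the convex boundary of a standard neighborhood $N$, observes that $L'$ is an honest null-homologous Legendrian link in the tight exterior of $L$ with Seifert surface $\Sigma'=\Sigma-N$, computes $\tb(L')=-r|\tb_\Q(L)|$ from the dividing-curve count on $\bdry N$ and $\rot(L')=r\rot_\Q(L)$ from a trivialization of $\xi$ near $L$, and then applies the Bennequin inequality for tight contact manifolds to $L'$. This sidesteps every issue above: no cover is constructed, and tightness of the exterior is used exactly where it is available. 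If you want to salvage your write-up, develop that second paragraph into the actual argument and discard the branched cover.
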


\begin{thmI}
For a rationally null-homologous non-loose transverse knot $T$ of homological order $r$ with rational Seifert surface $\Sigma$,
\[  \sl_\Q(T) \leq -\frac{1}{r} \chi(\Sigma).\]
\end{thmI}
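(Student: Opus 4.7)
The strategy is to reduce Theorem~\ref{thm:rationalselflinking} to the Legendrian version, Theorem~\ref{thm:rationalbennquinineq}, by passing to sufficiently stabilized Legendrian approximations of $T$. This mirrors Etnyre's passage in \cite{etnyreovertwisted} from Dymara's Legendrian inequality to its transverse analogue, with the rational bookkeeping carried over from \cite{BE}.

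First I would select a Legendrian approximation $L_0$ of $T$ chosen $C^0$-close to $T$, so that $T$ lies in a small standard neighborhood of $L_0$. Any overtwisted disk in $M \setminus L_0$ is, by compactness, disjoint from some such neighborhood and hence from $T$; since $T$ is non-loose this is impossible, so $L_0$ is non-loose. Now form the iterated negative stabilizations $L_n := S_-^n(L_0)$, each performed inside an arbitrarily small neighborhood of its predecessor. Because stabilization can be localized, each $L_n$ can be taken $C^0$-close to $L_0$ and hence to $T$; the same compactness argument then shows each $L_n$ is non-loose. Since negative stabilization preserves the positive transverse pushoff, $T_+(L_n) = T$ for all $n$.

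The rational analogues $\tb_\Q(L_n) = \tb_\Q(L_0) - n$ and $\rot_\Q(L_n) = \rot_\Q(L_0) - n$ of the classical stabilization identities carry over from the integer case, as does the pushoff formula $\sl_\Q(T) = \tb_\Q(L_n) - \rot_\Q(L_n)$; see \cite{BE}. Choosing $n$ large enough that both $\tb_\Q(L_n)$ and $\rot_\Q(L_n)$ are negative, Theorem~\ref{thm:rationalbennquinineq} applied to $L_n$ reads
\[-|\tb_\Q(L_n)| + |\rot_\Q(L_n)| \;\leq\; -\tfrac{1}{r}\chi(\Sigma),\]
which becomes $\tb_\Q(L_n) - \rot_\Q(L_n) \leq -\tfrac{1}{r}\chi(\Sigma)$ after expanding the absolute values. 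The left-hand side equals $\sl_\Q(T)$, yielding the desired inequality.

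The main obstacle is to justify that each $L_n$ remains non-loose, because stabilizing a non-loose Legendrian in isolation can produce a loose one. The point that rescues us is that we are free to realize each $L_n$ as a $C^0$-close Legendrian approximation of the \emph{given} non-loose transverse knot $T$, and non-looseness of a transverse knot always propagates to its $C^0$-close Legendrian approximations by the compactness argument above. Once this correspondence is in place, the rational adaptation is routine.
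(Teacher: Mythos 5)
Your proof is correct and follows essentially the same route as the paper: reduce to Theorem~\ref{thm:rationalbennquinineq} by passing to a sufficiently negatively stabilized Legendrian approximation of $T$, using $\sl_\Q(T)=\tb_\Q(L_n)-\rot_\Q(L_n)$ and the fact that Legendrian approximations of a non-loose transverse knot remain non-loose; the paper merely runs the identical argument contrapositively (assume the inequality fails, stabilize until $\tb_\Q\leq 0$, conclude $L$ and hence $T$ is loose). One caution: your compactness justification that each $L_n$ is non-loose has the quantifiers in the wrong order --- an overtwisted disk disjoint from $L_n$ is disjoint from \emph{some} neighborhood of $L_n$, which need not contain the fixed representative of $T$; the correct repair is that a push-off of $L_n$ taken inside that smaller neighborhood is still transversely isotopic to $T$, and this is precisely Proposition~\ref{prop:looseLeghasloosepushoff}, which you should cite in place of the ad hoc argument.
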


\subsection{Outline}
We recall the basic concepts and tools for dealing with contact structures and the Legendrian and transverse knots in them in section~\ref{sec:prelim} while section~\ref{sec:OT} covers the basics of overtwisted manifolds and non-loose knots.  Section~\ref{sec:Leg} develops our invariants and main results for non-loose Legendrian knots; section~\ref{sec:trans} addresses non-loose transverse knots. We conclude with a handful of problems and questions in section~\ref{sec:prob}.

\subsection{Acknowledgements}
We would like to thank Keiko Kawamuro and David Shea Vela-Vick for helpful conversations and Steven Sivek for his input.  KB was partially supported by a grant from the Simons Foundation (\#209184 to Kenneth L.\ Baker).  SO is supported by the Scientific and Technological Research Council of Turkey 
(TUBITAK 3501-\# 112T994 to Sinem Onaran) and SO thanks the Max Planck Institute for Mathematics, Bonn for their hospitality.

\section{Preliminaries}\label{sec:prelim}
\subsection{Contact structures}

A contact structure $\xi$ on a $3$--manifold $M$ is a nowhere integrable $2$--plane field, and the pair $(M,\xi)$ forms a contact $3$-manifold.  Locally $\xi$ is orientation preserving diffeomorphic to $\ker \alpha$ for some $1$--form $\alpha$ satisfying $\alpha \wedge d\alpha \neq 0$.  We restrict attention to positive contact structures, those for which $\alpha \wedge d\alpha >0$.   By Darboux's Theorem, e.g.\ \cite{geigesbook}, every point in a (positive) contact manifold $(M,\xi)$ has a neighborhood admitting an orientation preserving diffeomorphism to an open subset of $(\R^3, \xi_{\std})$, the standard contact structure on $\R^3$ where $\xi_{\std} = \ker( dz -y\, dx)$.

\subsection{Legendrian and transverse knots} 
 A particular smooth embedding of an oriented knot $K$ in a contact $3$--manifold $(M,\xi)$ is {\em Legendrian} if its tangent vectors lie in the contact planes:  $T_p K \subset \xi_p$ for every $p \in K$.   
 On the other hand, the knot $K$ is {\em transverse} if its tangent vectors are not in the contact planes: $T_p K \oplus \xi_p \cong T_p M$ for every $p \in K$.  The co-orientation of $\xi$ naturally orients $K$, and hence we always regard transverse knots as oriented knots.

An isotopy through Legendrian embeddings is a {\em Legendrian isotopy}, and an isotopy through transverse embeddings is a {\em transverse isotopy}.
A {\em Legendrian knot} is a Legendrian isotopy equivalence class, and a {\em transverse knot} is a transverse isotopy equivalence class.
A good reference for the fundamentals of Legendrian knots and transverse knots is \cite{etnyrelegandtrans}.

\subsubsection{Classical invariants}
The most basic invariant of a Legendrian or transverse knot is its topological {\em knot type}. 

 The contact structure endows a Legendrian knot $L$ with a natural framing $\lambda_\xi$ called its {\em contact framing}. Given a surface $\Sigma$ embedded in $(M,\xi)$ that contains $L$ (or even just properly embedded in $M-\nbhd(L)$ and radially extended to $L$ in $\nbhd(L)$), the twist number of $\Sigma$ along $L$ relative to $\xi$ measures how $\Sigma$ twists along $L$ relative to $\xi$.   That is,  $\tw_\xi(L,\Sigma)$ is the slope $(p/q)$ of the curve $\sigma = \Sigma \cap \bdry \nbhd(L)$ where for some orientation $[\sigma] = p[\mu] + q [\lambda_\xi]$.   Here we view the meridian $\mu$ of $L$ and the framing $\lambda_\xi$ in the boundary of a regular neighborhood of $L$, $\bdry \nbhd(L)$ and orient them so that if $L$ is oriented to be parallel to $\lambda_\xi$ then $L$ links $\mu$ once positively.  

{\sc Caution}: When $\Sigma$ is an orientable surface containing $L$, it is common to measure the twisting of $\xi$ along $L$ relative to $\Sigma$ instead (as in the definition of the Thurston-Bennequin number below).   This means our twist number has its sign opposite from what may be more traditional.

 The {\em Thurston-Bennequin number} $\tb(L)$ of a null-homologous Legendrian knot $L$ is the discrepancy between this contact framing and the framing induced by its Seifert surfaces; if $\Sigma$ is a Seifert surface for $L$ then $\tb(L) = -\tw_\xi(L,\Sigma)$.  The {\em rotation number} $\rot(L)$ of an oriented null-homologous Legendrian knot is the winding number of $TL$ after trivializing the contact structure along a Seifert surface.  The {\em self-linking number} $\sl(T)$ of a null-homologous transverse knot $T$ is the linking number of $T$ with a push-off of $T$ in the direction of a nowhere-zero section over $T$ after trivializing the contact structure along a Seifert surface.  These are the ``classical'' invariants of Legendrian and transverse knots (for further details, see \cite{etnyrelegandtrans} for example).
These invariants have been generalized to rational versions for rationally null-homologous knots \cite{BE,ozturk} and to relative versions for any knot in relation to a chosen homologous knot \cite{chernov}. We will use the rational versions $\tb_\Q$ and $\rot_\Q$ in this article.

\subsubsection{Stabilizations}
Let $L$ be an oriented Legendrian knot in a contact $3$-manifold.  By Darboux's Theorem, e.g.\ \cite{geigesbook}, for each point on $L$ there is a neighborhood $N$ with contactomorphism to $(\R^3, \xi_{\std})$
  sending $L\cap N$ to the $x$--axis. In this manner we locally represent $L$ with the front diagram (the projection to the $xz$--plane)
 as in the left-hand side of Figure~\ref{fig:legstab}.  The modification of $L$ to another Legendrian knot $L_+$ as shown in the top right-hand side of Figure~\ref{fig:legstab} is called a {\em positive stabilization} of $L$.  Similarly, $L_-$ shown in the lower right-hand side of Figure~\ref{fig:legstab} is a {\em negative stabilization} of $L$.  We will also write $L_{+a,-b}$ to indicate the Legendrian knot $L$ with $a$ positive stabilizations and $b$ negative stabilizations, for $a,b \geq 0$.  Note it does not matter the order in which these stabilizations are done and it does not matter where along $L$ these stabilizations are done; the results are Legendrian isotopic. However, as demonstrated by the contactomorphism of $\xi_{\std}$ resulting from the $\pi$ rotation about the $z$--axis reversing the image of $L$, whether a stabilization is positive or negative depends on the orientation of $L$: $(-L)_\pm = -(L_\mp)$.

\begin{figure}
\footnotesize
\centering
\psfrag{L}[][]{$L$}
\psfrag{P}[][]{$L_+$}
\psfrag{N}[][]{$L_-$}
\includegraphics[height=1.5in]{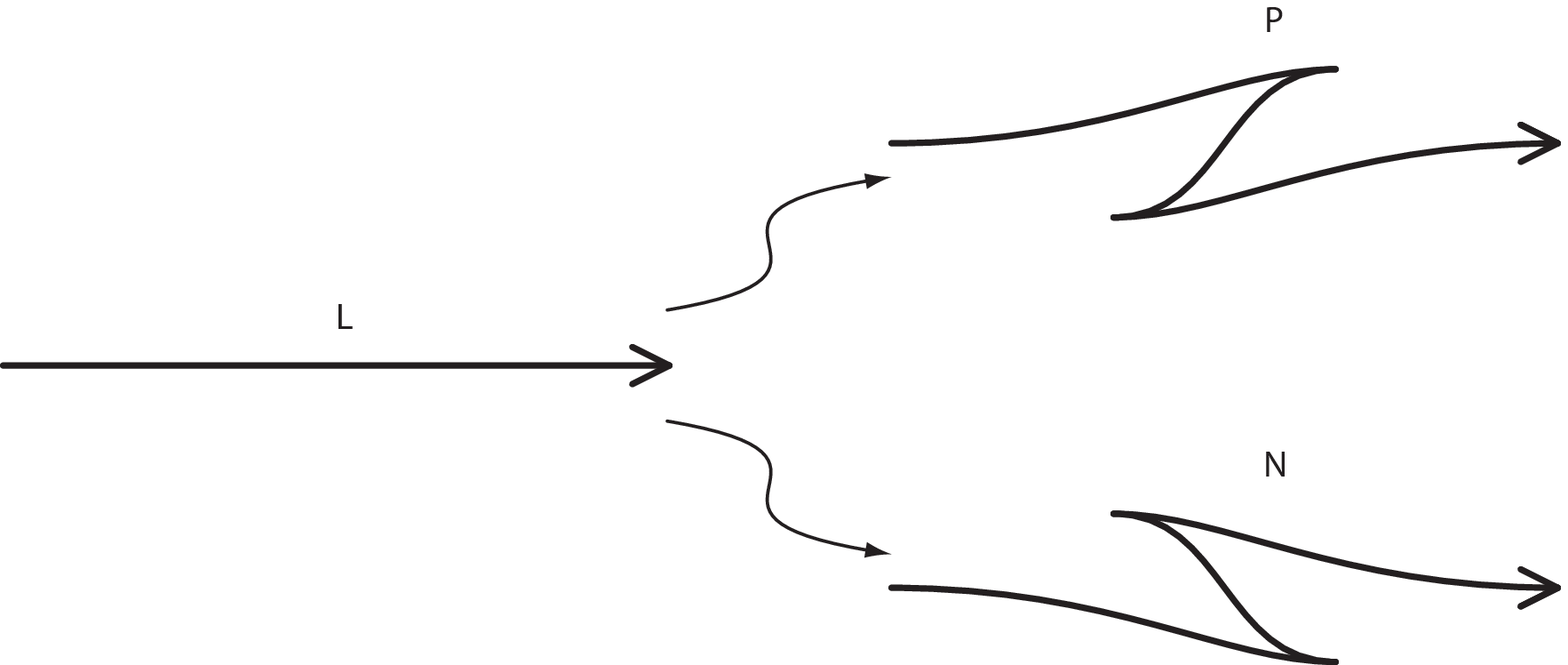}
\caption{}
\label{fig:legstab}
\end{figure}

The effect of stabilizations on the classical invariants for Legendrian knots are 
\[\tb(L_\pm) = \tb(L)-1 \quad \mbox{ and } \quad \rot(L_\pm) = \rot(L)\pm1.\]  
Note that $\tb$ is actually an invariant of unoriented Legendrian knots while $\rot$ requires an orientation.   Indeed, if $-L$ denotes $L$ with the opposite orientation, then $\tb(-L) = \tb(L)$ and $\rot(-L) = -\rot(L)$.    (Let us also note that $\tw_\xi(L_\pm,\Sigma) = \tw_\xi(L,\Sigma)+1$.)

\subsubsection{Transverse push-offs and Legendrian approximations}\label{sec:transverse}
An oriented Legendrian knot $L$, has a positive and negative {\em transverse push-off}, $T_+(L)$ and $T_-(L)$ defined as follows.    Choose a vector field $X$ on $\xi \vert_L$ such that at each point $p \in L$ we have the positive oriented basis $(X(p), T_p(L))$ for $\xi_p$.   Then a small push-off of $L$ in the direction of $X$ gives the transverse knot $T_+(L)$ which is oriented parallel to $L$ by the co-orientation of $\xi$.   A small push-off of $L$ in the direction of $-X$ gives the transverse knot $T_-(L)$ which is oriented parallel to $-L$.   We say the positive transverse push-off $T_+(L)$ is {\em the} transverse push-off.   Negative stabilizations of Legendrian knots do not change the transverse push-off, so any invariant of Legendrian knots that is not altered by negative stabilizations gives an invariant of transverse knots.

Note that, by definition, reversing the orientation of a transverse knot does not produce another transverse knot since its orientation no longer agrees with the co-orientation of the contact structure. 

The contact manifold $(\R^3, \xi_\cyl)$ with $\xi_\cyl = \ker(dz + r^2 \, d\theta)$ is the cylindrical model of the standard contact structure on $\R^3$.  There is a contactomorphism $(\R^3, \xi_\cyl) \to (\R^3, \xi_\std)$.  A regular $\epsilon$--neighborhood of the $z$--axis in this cylindrical model, modulo $z \mapsto z+1$, gives a standard solid torus neighborhood for a transverse knot $T$.  A Legendrian curve $L$ on a concentric torus that is topologically isotopic to the core $T$ of this solid torus is a {\em Legendrian approximation} of $T$.     While there are many different Legendrian approximations to $T$, any two have common negative stabilizations.  Furthermore, the (positive) transverse push-off of a Legendrian approximation of $T$ is again $T$.  See \cite{EFM} for further details.

\subsection{Convex surfaces}

An embedded surface $\Sigma$ in a contact manifold $(M,\xi)$ is  {\em convex} if there is an embedded product neighborhood $\Sigma \times (-1, 1)$ with $\Sigma =  \Sigma \times \{0\}$ such that $\xi$ is preserved by flow in the product direction within this product.  This notion is due to Giroux \cite{girouxconvex}.  He also shows that any surface $\Sigma$ embedded in a contact manifold so that each component $L$ of $\bdry \Sigma$ is Legendrian with $\tw_\xi(L,\Sigma) \geq 0$ admits a $C^0$--isotopy fixing its boundary to a convex representative.  Such an isotopy may be found in any neighborhood of the surface.

Given an oriented convex surface $\Sigma$ with (possibly empty) Legendrian boundary, let $X$ be a vector field given by a vertical flow in the product direction of a product neighborhood of $\Sigma$ (for example $X =\partial_t$ where $t$ is the interval parameter).  
For each point $x \in \Sigma$ the vector $X(x) \in T_x M$ projects to a positive, zero, or negative multiple of the co-orientation of $\xi_x$. The {\em dividing set} $\Gamma$ of $\Sigma$ is the properly embedded $1$--manifold consisting of points for which this is the zero multiple, that is $\Gamma = \{ x \in \Sigma \colon X(x) \in \xi_x\}$.  Then $\Sigma_+$ is the component of $\Sigma - \Gamma$ where the multiple is positive, and $\Sigma_-$ is the component where the multiple is negative.   If $\Gamma'$ is smoothly isotopic to $\Gamma$ in $\Sigma$, there is a smooth isotopy of $\Sigma$, fixing its boundary, through convex surfaces realizing $\Gamma'$ as a dividing set.

If $\Sigma$ is a convex surface with dividing set $\Gamma$ and $L$ is a Legendrian knot in $\Sigma$ (such as component of $\bdry \Sigma$) then $L$ is necessarily transverse to $\Gamma$ and $\tw_\xi(L,\Sigma) = \tfrac{1}{2} |L \cap \Gamma|$.

\subsection{Contact surgery}
A Legendrian knot $L$ has a standard tight neighborhood $\nbhd(L)$ with convex boundary having two dividing curves.  Contact surgery of slope $(p/q)$ is a Dehn surgery on $L$ producing a new contact manifold by replacing $\nbhd(L)$ with another tight contact solid torus having the same boundary but with a meridian of slope $(p/q)$ with respect to the contact framing.  (If $L$ is null-homologous, a $(p/q)$--contact surgery is topologically a Dehn surgery of slope $p/q+\tb(L)$.) While generically there are multiple contact solid tori with the required boundary data, this Dehn surgery is unique when $p = \pm1$.  See \cite{geigesbook} for a more detailed discussion.  In this article we primarily concern ourselves with $(\pm1)$--surgeries.  

After $(1/n)$--surgery on $L$, the core curve of the attached solid torus is again a Legendrian knot $L^*$ called the {\em surgery dual}.  For such surgeries, both $L$ and $L^*$ are isotopic through their solid tori to a curve $L'$ in $\bdry \nbhd(L)$.
We say $L'$ is a {\em Legendrian push-off} of $L$ and the annulus they cobound is a {\em push-off annulus}.  In the surgered manifold, $L'$ may also be viewed as a Legendrian push-off of $L*$.  Through this push-off, an orientation on $L$ confers a natural orientation upon  $L^*$.

\section{Basics on Knots in Overtwisted Contact Structures}\label{sec:OT}

\subsection{Overtwisted disks, loose and non-loose knots}\label{sec:otdisk}
A smoothly embedded disk $D$ with Legendrian boundary such that $\tb(\bdry D)=0$ is an {\em overtwisted disk}.  We orient $D$ so that it induces the orientation on $\bdry D$ with $\rot(\bdry D)>0$. 
 A contact manifold is {\em overtwisted} if it contains an overtwisted disk, and it is {\em tight} otherwise.  

For a convex overtwisted disk, since $\tb(\bdry D) = 0$ it follows that the dividing set $\Gamma$ is disjoint from $\bdry D$ and hence is a collection of simple closed curves in the interior of $D$.

A {\em standard overtwisted disk} is one with neighborhood contactomorphic to that of $D_{\OT} = \{ (r,\theta, 0) \colon r \leq \pi \}$ in the contact manifold $(\R^3, \xi_{\OT})$ where $\xi_\OT = \ker( \cos r \, dz + r \sin r \, d\theta)$.  This disk is convex and has one dividing curve: use the vector field $X = \partial_z$ so that $\Gamma = \{(\pi/2, \theta,0) \}$. Observe that by orienting $D_{\OT}$ so that with the boundary orientation $\rot(\bdry D_{\OT})>0$, the origin is a positive elliptic singularity of the characteristic foliation.

\begin{prop}[Proof of Proposition 4.6.28~\cite{geigesbook}] \label{prop:sot}
Any overtwisted disk $D$ admits a slight isotopy fixing its boundary to an overtwisted disk $D'$ that either is a standard overtwisted disk or properly contains a standard overtwisted disk. \qed
\end{prop}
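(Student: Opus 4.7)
The plan is to put $D$ in convex form, locate an innermost component of its dividing set, and then extract a standard overtwisted disk via the Legendrian realization principle together with a Giroux-style neighborhood theorem. First, since $\bdry D$ is Legendrian with $\tw_\xi(\bdry D, D) = -\tb(\bdry D) = 0 \geq 0$, Giroux's convexity theorem provides a $C^0$-small isotopy of $D$, fixing $\bdry D$ and supported in any preassigned neighborhood of $D$, to a convex disk. After this isotopy, the dividing set $\Gamma$ is disjoint from $\bdry D$ and consists of a nonempty finite collection of simple closed curves in $\Int(D)$.

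Next I pick an innermost component $\gamma$ of $\Gamma$ and choose a simple closed curve $L \subset \Int(D)$ parallel to and disjoint from $\gamma$, positioned just outside the innermost subdisk bounded by $\gamma$, so that $L$ bounds a subdisk $D' \subset D$ with $D' \cap \Gamma = \{\gamma\}$. Because $L$ is disjoint from $\Gamma$ and every component of $D \setminus (\Gamma \cup L)$ abuts some dividing curve, the non-isolating hypothesis of Giroux's Legendrian realization principle holds for $L$. Applying the principle, I $C^0$-isotope $D$ further through convex surfaces, fixed near $\bdry D$, so that $L$ becomes Legendrian. Then $\tw_\xi(L, D') = \tfrac{1}{2}|L \cap \Gamma| = 0$, whence $\tb(\bdry D') = 0$, so $D'$ is itself an overtwisted disk whose dividing set is the single curve $\gamma$.

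To promote $D'$ to a standard overtwisted disk, I appeal to Giroux's neighborhood uniqueness theorem for convex surfaces: the germ of $\xi$ along a convex surface is determined, up to isotopy rel boundary, by the isotopy class of the dividing set. Since $D_\OT \subset (\R^3, \xi_\OT)$ is itself a convex disk with Legendrian boundary of $\tb = 0$ and a single dividing curve, this yields a contactomorphism of a neighborhood of $D'$ onto a neighborhood of $D_\OT$ identifying the two disks. If $\Gamma$ originally consists of a single component, pushing $L$ out to coincide with $\bdry D$ gives $D' = D$ and $D$ itself becomes standard; otherwise $L$ lies in $\Int(D)$ and $D$ properly contains the standard disk $D'$. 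I expect the main obstacle to be the careful verification of the non-isolating hypothesis for Legendrian realization and the precise identification of the contact germ on $D'$ with that on $D_\OT$; both are routine in convex surface theory but require care to keep the isotopies small and boundary-fixing so that the conclusion is genuinely a slight isotopy of the original $D$.
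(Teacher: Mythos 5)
Your argument is correct and is essentially the proof the paper points to: the statement carries no argument of its own beyond citing the proof of Proposition~4.6.28 in Geiges's book, which is exactly this convex-surface argument (perturb $D$ rel boundary to a convex disk, note the dividing set is a nonempty collection of circles in the interior, take an innermost one, Legendrian-realize a parallel curve bounding a subdisk with a single dividing circle, and invoke the uniqueness of the contact germ determined by the dividing set to identify that subdisk with $D_{\OT}$). The one point to tidy is the order of your case analysis: when $\Gamma$ is a single circle the non-isolating hypothesis genuinely fails for an interior curve $L$ (the outer annulus between $L$ and $\partial D$ meets no dividing curve), so the Legendrian realization step should be invoked only in the multi-component case, with the connected case handled directly by taking $L=\partial D$ as you do at the end.
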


This gives us an immediate corollary.

\begin{cor}\label{cor:passtosmallOT}
Let $K$ be a Legendrian or transverse knot in an overtwisted contact $3$--manifold.    Then $K$ intersects every overtwisted disk if and only if $K$ intersects every standard overtwisted disk.  \qed
\end{cor}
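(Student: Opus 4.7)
The forward direction is immediate: a standard overtwisted disk is an overtwisted disk, so if $K$ meets every overtwisted disk then in particular it meets every standard one. The content of the corollary lies in the reverse implication, which I would prove by contrapositive.

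The plan is as follows. Suppose $K$ fails to intersect every overtwisted disk; choose an overtwisted disk $D$ with $K \cap D = \emptyset$. Since $K$ is a compact embedded $1$--submanifold of $M$ and $D$ is a compact embedded disk disjoint from $K$, there is an open tubular neighborhood $U \supset D$ with $\overline{U} \cap K = \emptyset$. The goal is to produce a standard overtwisted disk inside $U$, which will then automatically be disjoint from $K$.

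To do this I invoke Proposition~\ref{prop:sot}: $D$ admits a slight isotopy, fixing $\bdry D$, to an overtwisted disk $D'$ that either is a standard overtwisted disk or properly contains one. The crucial point I would need to verify (and which I expect to be the only real subtlety) is that the word ``slight'' there genuinely allows the isotopy to be realized inside any preassigned neighborhood of $D$. Inspecting the proof of Proposition 4.6.28 in~\cite{geigesbook}, the modification is local around the singular points of the characteristic foliation and the construction of the neighborhood contactomorphic to the model $(\R^3,\xi_\OT)$ proceeds within an arbitrarily small ambient neighborhood of $D$; hence we may arrange that $D' \subset U$. Consequently a standard overtwisted disk $D_0 \subseteq D' \subset U$ exists, and $D_0 \cap K \subseteq U \cap K = \emptyset$.

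This produces a standard overtwisted disk disjoint from $K$, contradicting the hypothesis that $K$ meets every standard overtwisted disk. Thus $K$ must in fact meet every overtwisted disk, completing the proof. The main obstacle, as noted, is ensuring that the isotopy from Proposition~\ref{prop:sot} can be confined to the chosen neighborhood $U$; once this is in hand, the rest of the argument is a direct application.
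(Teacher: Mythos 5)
Your proof is correct and is exactly the argument the paper intends: the paper derives the corollary immediately from Proposition~\ref{prop:sot}, and your write-up simply spells out the contrapositive step (a disk missed by $K$ can be perturbed, within a neighborhood still disjoint from the compact knot $K$, to one containing a standard overtwisted disk). Your care about confining the ``slight'' isotopy to a preassigned neighborhood is the right point to check, and it holds since the perturbation in Proposition 4.6.28 of \cite{geigesbook} can be performed in any neighborhood of the disk.
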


 As mentioned in the introduction, a Legendrian or transverse knot $K$ in an overtwisted contact $3$--manifold $(M,\xi)$ is {\em non-loose} if $K$ intersects every overtwisted disk (so that its complement is tight) and is {\em loose} if it is disjoint from some overtwisted disk (so that its complement is overtwisted).   Corollary~\ref{cor:passtosmallOT} lets us rephrase this: $K$ is non-loose if it intersects every standard overtwisted disk and loose if it is disjoint from some standard overtwisted disk.   Note that while the complement of any knot in a tight manifold is tight, the term {\em non-loose} is used to imply that the ambient contact manifold is overtwisted.  Furthermore, observe that if some contact surgery on a Legendrian knot is tight then the complement of the knot is tight --- this can be a convenient way to detect non-looseness.

\subsection{Classical invariants and non-loose knots}

If $L$ were a null-homologous Legendrian knot in a tight contact manifold, then the Bennequin Inequality \cite{Benn83,Eli93}  would hold true: $\tb(L) + |\rot(L)| \leq -\chi(L)$.  In an overtwisted contact manifold, one may find Legendrian knots that violate this inequality.  Loose Legendrian knots of arbitrarily large $\tb$ may be constructed by exploiting the overtwisted disks in their complements.   On the other hand, non-loose Legendrian knots could potentially violate the Bennequin Inequality if  $\tb$ were positive and sufficiently large.

\begin{thm}[\'Swi\c{a}towski \cite{dymara,etnyreovertwisted}]\label{thm:bennequinineq}
For a non-loose null-homologous Legendrian knot $L$ with Seifert surface $\Sigma$,
\[-|\tb(L)| + |\rot(L)| \leq -\chi(\Sigma).\]
\end{thm}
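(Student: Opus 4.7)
My plan is to put a Seifert surface $\Sigma$ of $L$ into convex position and derive the inequality from a dividing-curve Euler-characteristic count, with tightness of the complement $M\setminus L$ as the crucial input. Because Giroux's perturbation requires $\tw_\xi(L,\Sigma)=-\tb(L)\ge 0$, I will split on the sign of $\tb(L)$.

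In the case $\tb(L)\le 0$ I first invoke Giroux's theorem to isotope $\Sigma$ rel $\bdry$ to a convex surface whose dividing set $\Gamma$ meets $L$ transversely in $2|\tb(L)|$ points, cutting $L$ into $|\tb(L)|$ positive and $|\tb(L)|$ negative arcs. Because $M\setminus L$ is tight, Giroux's criterion applied to a convex collar neighborhood of any closed component of $\Gamma$ lying in the interior of $\Sigma$ rules out that component bounding a disk in $\Sigma^\circ$: such a disk would produce an overtwisted disk inside the tight complement. Hence every component of $\Sigma_+$ and $\Sigma_-$ either meets $L$ in an arc (so contributes at most $+1$ to its Euler characteristic) or has non-positive Euler characteristic. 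Combining $\chi(\Sigma)=\chi(\Sigma_+)+\chi(\Sigma_-)$ with the standard identity $\rot(L)=\chi(\Sigma_+)-\chi(\Sigma_-)$ and the component-count bound produced above yields the classical Bennequin-type inequality $\tb(L)+|\rot(L)|\le -\chi(\Sigma)$, which coincides with the claim when $\tb(L)\le 0$.

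For $\tb(L)>0$ I cannot put $\Sigma$ into convex position rel $\bdry$ directly, so my plan is to reduce to the previous case by stabilization. Negatively stabilizing $L$ exactly $k=\tb(L)$ times produces a Legendrian $L'$ with $\tb(L')=0$ and $\rot(L')=\rot(L)-k$, and, after adjusting $\Sigma$ in a collar of its boundary, the same surface serves as a Seifert surface for $L'$ with unchanged Euler characteristic. If $L'$ remains non-loose, the first case applied to $L'$ yields $|\rot(L)-\tb(L)|\le -\chi(\Sigma)$; the symmetric positive-stabilization argument gives $|\rot(L)+\tb(L)|\le -\chi(\Sigma)$. Combining either with the sign of $\rot(L)$ gives $|\rot(L)|-\tb(L)\le -\chi(\Sigma)$, which is $-|\tb(L)|+|\rot(L)|\le -\chi(\Sigma)$ in this regime.

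The main obstacle is precisely this reduction: stabilizing a non-loose Legendrian knot can produce a loose one, so I cannot automatically invoke the first-case inequality after $k$ stabilizations. My contingency is to bypass convex perturbations in this case and work directly with the characteristic foliation $\xi\cap T\Sigma$ on $\Sigma$, as in the Bennequin--Eliashberg analysis. Tightness of $M\setminus L$ still constrains the signed counts of elliptic and hyperbolic singularities in the interior of $\Sigma$, and the resulting Poincar\'e--Hopf-type identity between these counts and $\chi(\Sigma)$, $\tb(L)$, $\rot(L)$ is insensitive to the sign of $\tb(L)$; this approach would treat both cases uniformly at the cost of a more delicate singularity-elimination argument.
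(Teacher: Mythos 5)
There is a genuine gap, and it sits exactly where the theorem has content. Your case $\tb(L)\le 0$ is fine: there the convex-surface dividing-set count gives $\tb(L)+|\rot(L)|\le-\chi(\Sigma)$, which agrees with the claim. But for $\tb(L)>0$ your reduction by stabilization cannot be repaired. Stabilizing a non-loose Legendrian knot routinely produces a loose one --- this is the whole point of the ``tension'' invariant in this paper, and Theorem~\ref{thm:stabtoloose} guarantees that \emph{every} non-loose knot becomes loose after finitely many stabilizations --- so the hypothesis ``$L'$ remains non-loose'' that your argument needs is unavailable precisely when it matters (e.g.\ a non-loose unknot has $\tb=n>0$ and tension $1$, so a single stabilization already looses it). Your contingency plan is also not viable as stated: a Poincar\'e--Hopf/singularity count on $\Sigma$ that is ``insensitive to the sign of $\tb(L)$'' would produce the classical inequality $\tb(L)+|\rot(L)|\le-\chi(\Sigma)$, which is \emph{false} for non-loose knots of positive $\tb$ (the non-loose unknots of Theorem~\ref{thm:nonlooseunknotS3} have $\tb=n$, $\rot=\pm(n-1)$, $\chi=1$, so $\tb+|\rot|=2n-1>-1$). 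The appearance of $-|\tb(L)|$ rather than $\tb(L)$ is not a cosmetic rewriting; it is the theorem.

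The argument the paper relies on (Etnyre's, reproduced in the proof of Theorem~\ref{thm:rationalbennquinineq} and specialized to $r=1$) avoids both problems by never trying to make $\Sigma$ convex rel $L$. Instead one takes a standard convex neighborhood $N$ of $L$ and replaces $L$ by the ruling curve $L'=\partial N\cap\Sigma$ of slope given by the Seifert framing. On the convex torus $\partial N$ with two dividing curves of contact-framing slope, such a curve meets the dividing set $2|\tb(L)|$ times, so $\tw_\xi(L',\partial N)=-|\tb(L)|$ and hence $\tb(L')=-|\tb(L)|$ \emph{regardless of the sign of} $\tb(L)$, while $\rot(L')=\rot(L)$ and $\chi(\Sigma-N)=\chi(\Sigma)$. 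Now $L'$ is a null-homologous Legendrian knot in the exterior of $L$, which is tight by the non-looseness hypothesis, so the ordinary Bennequin inequality applies there and yields $-|\tb(L)|+|\rot(L)|\le-\chi(\Sigma)$. If you want to salvage your write-up, replace your entire second case by this boundary-of-neighborhood trick; your first case then becomes redundant.
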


\begin{thm}[\cite{etnyreovertwisted}]\label{thm:selflinking}
For a non-loose null-homologous transverse knot $T$ with Seifert surface $\Sigma$,
\[ \sl(T) \leq -\chi(\Sigma). \]
\end{thm}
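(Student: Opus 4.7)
The plan is to reduce Theorem~\ref{thm:selflinking} to the Legendrian Bennequin-type inequality of Theorem~\ref{thm:bennequinineq} via Legendrian approximation. Recall from Section~\ref{sec:transverse} that a transverse knot $T$ admits Legendrian approximations $L$, all of which share common negative stabilizations, and that $T_+(L) = T$. A standard calculation in the standard cylindrical neighborhood of $T$ shows the classical invariants are related by $\sl(T) = \tb(L) - \rot(L)$, and under a negative stabilization of $L$ we have $\tb(L_-) - \rot(L_-) = (\tb(L)-1) - (\rot(L)-1) = \tb(L) - \rot(L)$, consistent with the fact that $L_-$ is still a Legendrian approximation of the same $T$.

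First I would establish that \emph{any} Legendrian approximation $L$ of a non-loose transverse knot $T$ is itself non-loose. Given a standard transverse neighborhood $\nbhd(T)$ in which $L$ sits on a concentric torus, the complement $M\setminus L$ decomposes as the tight complement $M\setminus \nbhd(T)$ glued to the tight piece $\nbhd(T)\setminus \nbhd(L)$ along a convex torus; using that the latter is a subset of a standard tight solid torus and invoking Corollary~\ref{cor:passtosmallOT} to push any putative standard overtwisted disk off of $\nbhd(T)$, one argues an overtwisted disk in $M\setminus L$ would force an overtwisted disk in $M\setminus T$, contradicting non-looseness of $T$.

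Next, fix a Legendrian approximation $L$ of $T$ and take enough negative stabilizations so that $\tb(L) \leq 0$ and $\rot(L) \leq 0$; the stabilized knot remains a Legendrian approximation of $T$ (negative stabilizations do not alter the transverse push-off), so by the first step it is still non-loose. Apply Theorem~\ref{thm:bennequinineq} to this stabilized $L$:
\[
-|\tb(L)| + |\rot(L)| \leq -\chi(\Sigma).
\]
With $\tb(L), \rot(L) \leq 0$ we have $-|\tb(L)| = \tb(L)$ and $|\rot(L)| = -\rot(L)$, so this reads $\tb(L) - \rot(L) \leq -\chi(\Sigma)$. Since $\sl(T) = \tb(L) - \rot(L)$ is preserved through the negative stabilizations, we conclude $\sl(T) \leq -\chi(\Sigma)$.

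The main obstacle is the first step, the inheritance of non-looseness by Legendrian approximations. The subtlety is that while the complement of $T$ being tight controls the contact geometry outside an arbitrarily thin standard neighborhood of $T$, an overtwisted disk in $M\setminus L$ is only a priori confined to $M\setminus L$ and could wander through $\nbhd(T)\setminus \nbhd(L)$. The resolution is to replace an arbitrary overtwisted disk with a standard one (Proposition~\ref{prop:sot}/Corollary~\ref{cor:passtosmallOT}) and then use the local tightness of the standard model around $T$ together with a convex surface isotopy near $\partial \nbhd(T)$ to push it entirely into $M\setminus \nbhd(T)$, contradicting non-looseness of $T$. Once this foundational fact is in hand, the remainder of the argument is the purely bookkeeping computation above.
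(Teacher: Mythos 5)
Your reduction --- take a Legendrian approximation $L$ with $\sl(T)=\tb(L)-\rot(L)$, stabilize negatively until $\tb(L)\le 0$, and apply Theorem~\ref{thm:bennequinineq} --- is exactly the paper's route (it is how the rational generalization, Theorem~\ref{thm:rationalselflinking}, is proved there, phrased contrapositively), and your bookkeeping in the second and third paragraphs is correct. The genuine gap is in your first step. The fact you need (non-looseness of $T$ forces non-looseness of every Legendrian approximation) is true, but the mechanism you propose for proving it would fail. Tightness is not preserved by gluing along tori, so you cannot deduce tightness of $M\setminus L$ from tightness of the two pieces $M\setminus\nbhd(T)$ and $\nbhd(T)\setminus\nbhd(L)$; and there is no convex-surface principle that isotopes a standard overtwisted disk off the convex torus $\bdry\nbhd(T)$ within $M\setminus L$ (Corollary~\ref{cor:passtosmallOT} only lets you assume the disk is standard; it supplies no such isotopy, and cut-and-paste along intersection curves with the torus need not return an overtwisted disk). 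Indeed, if your ``push the disk to one side'' step were valid in this generality, it would prove that any contact manifold obtained by gluing two tight pieces along a torus is tight. That is false, and its failure is the subject of much of this paper: $(+1)$--surgery on a Legendrian knot with tight complement glues a tight solid torus to a tight knot complement along a convex torus, yet is frequently overtwisted (Theorems~\ref{thm:bigdepth} and \ref{thm:tensionone}).

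The repair is to move the knot rather than the disk; this is precisely Proposition~\ref{prop:looseLeghasloosepushoff}, which is what the paper invokes at this point. In contrapositive form: if some Legendrian approximation of $T$, or a negative stabilization of one, were loose, choose an overtwisted disk $D$ disjoint from it; the positive transverse push-off of that Legendrian may be taken inside an arbitrarily small neighborhood of it, hence disjoint from $D$; and this push-off is transversely isotopic to $T$ (negative stabilizations do not change the push-off), so $T$ itself would be loose, a contradiction. With this locality argument in place of your gluing argument, the remainder of your proof goes through verbatim.
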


This follows from the relation
\[ \sl(T_\pm(L)) = \tb(L) \mp \rot(L)\]
where $T_\pm(L)$ denotes the positive and negative transverse push-offs of $L$.

We give an extension of these bounds to rationally null-homologous non-loose knots modeled on the proof of these given in \cite{etnyreovertwisted}.  See \cite{BE} for details on the rational classical invariants and \cite{sinemgeiges} for ways to compute these from contact surgery diagrams.

Recall that a rationally null-homologous knot $K$ in a $3$--manifold $M$ has a {\em rational Seifert surface} (or generalized Seifert surface) which may be obtained from a properly embedded orientable connected surface $\Sigma$ in $M-\nbhd(K)$ such that, when oriented, $\bdry \Sigma$ is a collection of coherently oriented essential curves in the torus $\bdry \nbhd(K)$.  The surface $\Sigma$ may then be radially extended through $\nbhd(K)$ to a singular surface where $K$, the core of $\nbhd(K)$, is the singular set to obtain the rational Seifert surface.  It's convenient however to regard $\Sigma$ as the rational Seifert surface and we say $\chi(\Sigma)$ is the Euler characteristic of the rational Seifert surface.   Observe that the multiplicity $r$ with which $\bdry \Sigma$ covers $K$ is the order of $K$ in homology, $r[K]=0 \in H_1(M)$.

\begin{thm} \label{thm:rationalbennquinineq}
For a non-loose rationally null-homologous Legendrian knot $L$ of homological order $r$ with rational Seifert surface $\Sigma$,
\[ -|\tb_\Q(L)| + |\rot_\Q(L)| \leq -\frac{1}{r} \chi(\Sigma). \]
\end{thm}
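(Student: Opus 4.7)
The plan is to adapt the convex-surface proof of Theorem~\ref{thm:bennequinineq} (the null-homologous case due to \'Swi\c{a}towski and Etnyre) to rational Seifert surfaces, tracking the rational invariants as in \cite{BE}. First I would fix a standard tight convex neighborhood $\nbhd(L)$ of $L$ with two dividing curves of slope $\lambda_\xi$ on $\bdry\nbhd(L)$ and choose a rational Seifert surface $\Sigma\subset M\setminus\Int\nbhd(L)$ whose boundary $\bdry\Sigma$ is $r$ parallel coherently oriented Legendrian curves on $\bdry\nbhd(L)$. After isotoping $\bdry\Sigma$ within $\bdry\nbhd(L)$ and, if needed, attaching bypasses to arrange $\tw_\xi\geq 0$ along $\bdry\Sigma$, I would apply Giroux's flexibility theorem to perturb $\Sigma$ to a convex surface with dividing set $\Gamma$.

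The crucial step is to use tightness of $M\setminus L$ to constrain $\Gamma$. If $\Gamma$ had a homotopically trivial closed component, an innermost such would bound a subdisk $D\subset\Sigma$ disjoint from the rest of $\Gamma$; pushing $D$ off $\Sigma$ into $\Sigma_+$ or $\Sigma_-$ produces an overtwisted disk with Legendrian boundary of $\tb=0$ contained in $M\setminus\nbhd(L)\subset M\setminus L$, contradicting non-looseness. A parallel argument via the Legendrian Realization Principle rules out arcs of $\Gamma$ cobounding bigons with $\bdry\Sigma$. Consequently every component of $\Sigma_\pm$ has non-positive Euler characteristic except possibly for disk regions bounded alternately by a single arc of $\Gamma$ and a single arc of $\bdry\Sigma$, whose count on each side is at most $\tfrac{1}{2}|\Gamma\cap\bdry\Sigma|$.

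Next I would assemble the Euler characteristic bookkeeping via the rational convex-surface identities established in \cite{BE},
\[
r\,\rot_\Q(L)=\chi(\Sigma_+)-\chi(\Sigma_-),\qquad \chi(\Sigma)=\chi(\Sigma_+)+\chi(\Sigma_-),
\]
together with the twist--slope relation $|\Gamma\cap\bdry\Sigma|=-2r\,\tb_\Q(L)$ (valid when $\tb_\Q(L)\leq 0$). The essentiality of $\Gamma$ from the previous step then forces
\[
r\,|\rot_\Q(L)|=|\chi(\Sigma_+)-\chi(\Sigma_-)|\leq -\chi(\Sigma)+r\,|\tb_\Q(L)|,
\]
which upon dividing through by $r$ is the desired inequality.

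The main obstacle is the case $\tb_\Q(L)>0$, where $\tw_\xi$ is negative along $\bdry\Sigma$ and $\Sigma$ cannot be directly convexified. Mirroring the workaround used in \cite{etnyreovertwisted} in the null-homologous setting, I would replace $\bdry\Sigma$ by a Legendrian push-off of $L$ on $\bdry\nbhd(L)$ whose slope makes the boundary twist non-negative, at the cost of adding a meridional correction to $\Sigma$ whose contribution must be fed back through the rational linking and rotation identities. In the rational setting the principal delicacy is ensuring the factor of $r$ propagates consistently through every identity so that the corrected bound is exactly the stated one, rather than some shifted variant.
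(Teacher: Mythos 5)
Your proposal takes a genuinely different route from the paper's. The paper does not convexify the rational Seifert surface and redo the Euler-characteristic bookkeeping; instead it reduces to the already-stated integral inequality, Theorem~\ref{thm:bennequinineq}, applied in the tight exterior of $L$. Concretely, it takes $L'=\Sigma\cap\bdry N$ to be Legendrian ruling curves (or Legendrian divides) on the convex boundary of a standard neighborhood $N$ of $L$, notes that $L'$ is a null-homologous Legendrian link in the tight manifold $M-N$ with Seifert surface $\Sigma'=\Sigma-N$ (so $\chi(\Sigma')=\chi(\Sigma)$), computes $\tb(L')=-r|\tb_\Q(L)|$ from $[L']=r\lambda+r\tb_\Q(L)\mu$ and $\rot(L')=r\rot_\Q(L)$ from a trivialization of $\xi$ near $L$, and then quotes the integral bound for $L'$. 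Your route essentially re-proves that quoted bound directly on the rational surface; carried out in full it is more self-contained, while the paper's route buys a much shorter argument and, crucially, uniformity in the sign of $\tb_\Q(L)$.

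That last point is where your proposal has a genuine gap. The step ``attaching bypasses to arrange $\tw_\xi\geq 0$ along $\bdry\Sigma$'' is not a legitimate move: bypasses must be found, not decreed, and in a tight exterior you cannot in general adjust the twisting along a prescribed boundary curve without changing the curve (stabilizing it), which alters exactly the quantities you are trying to bound. Likewise your workaround for $\tb_\Q(L)>0$ (``a meridional correction to $\Sigma$'' fed back through the linking identities) is left unspecified precisely at the hardest point. In the paper's setup this difficulty does not arise at all: the boundary of $\Sigma'$ sits at the Seifert slope on the convex torus $\bdry N$, whose dividing curves have the contact-framing slope, so its twisting relative to $\bdry N$ (hence relative to $\Sigma'$, since the two framings agree) is $|r\tb_\Q(L)|\geq 0$ for either sign of $\tb_\Q(L)$, and $\Sigma'$ can always be made convex with no case division. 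A smaller inaccuracy: $\bdry\Sigma$ need not consist of $r$ parallel longitude-like curves; in general it is $c$ parallel curves each covering $L$ with multiplicity $r/c$, which is why the paper computes with the total class $[L']=r\lambda+r\tb_\Q(L)\mu$ rather than component by component. Your total-class identities survive this, but the phrase ``$r$ parallel curves'' should not be taken literally.
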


\begin{proof}
We follow Etnyre's proof for integrally null-homologous knots in \cite{etnyreovertwisted}. The idea is to take the ``Seifert cable'' $L'$ of $L$ as a set of ruling curves on a convex neighborhood of $L$.  Then we have a null-homologous link $L'$ in the tight exterior of $L$ and may relate $\tb(L')$ to $r\tb_\Q(L)$ and $\rot(L')$ to $r\rot_\Q(L)$.  An application of the Bennequin type inequality to $L'$ will yield the desired result.

Let $\Sigma$ be a rational Seifert surface for $L$, and let $N$ be a standard convex neighborhood of $L$ such that either $L'=\bdry N \cap \Sigma$ is a collection of (Legendrian) ruling curves or a collection of Legendrian divides.  
In the exterior of $L$, which is tight, $L'$ is a null-homologous Legendrian link with Seifert surface $\Sigma' = \Sigma - N$. 

Let $\lambda$ be the homology class of a dividing curve in $\bdry N$ oriented parallel to $L$ and let $\mu$ be the homology class of a meridian in $\bdry N$ linking $L$ positively.   Then $[L'] = r \lambda + r\tb_\Q(L) \mu \in H_1(\bdry N)$.  Assuming that $L'$ is a collection of ruling curves and there are just two dividing curves, the twisting of $L'$ with respect to $\bdry N$ is then $-\frac{1}{2}| 2\lambda \cdot [L']| = -|r\tb_\Q(L)|$.  Because the framings on $L'$ induced by $\bdry N$ and $\bdry \Sigma'$ are equivalent, $\tb(L') = -r|\tb_\Q(L)|$.  If $L'$ is a collection of Legendrian divides, then it follows that $\tb(L')=0=\tb_\Q(L)$.

For the rotation number, trivialize the contact planes in a slightly larger neighborhood of $L$ than $N$ by extending the unit tangent vector of $L$ to a nowhere zero section of $\xi$.  Then any Legendrian curve in $\bdry N$ wrapping $n$ times positively around $L$ has zero winding with respect to this trivialization and thus its rotation number is $n$ times that of $L$.  Hence $\rot(L') = r\rot_\Q(L)$.

\end{proof}

\begin{thm}\label{thm:rationalselflinking}
For a rationally null-homologous non-loose transverse knot $T$ of homological order $r$ with rational Seifert surface $\Sigma$,
\[  \sl_\Q(T) \leq -\frac{1}{r} \chi(\Sigma).\]
\end{thm}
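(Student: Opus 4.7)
The plan is to reduce the statement to Theorem~\ref{thm:rationalbennquinineq} by applying it to a sufficiently negatively stabilized Legendrian approximation of $T$, paralleling Etnyre's derivation of Theorem~\ref{thm:selflinking} from Theorem~\ref{thm:bennequinineq} in \cite{etnyreovertwisted}.

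First I would choose a Legendrian approximation $L$ of $T$, so that $T = T_+(L)$. Because negative stabilization does not alter the positive transverse push-off, we also have $T = T_+(L_{-n})$ for every $n \geq 0$. A key observation is that each such $L_{-n}$ is non-loose: if an overtwisted disk $D$ were disjoint from $L_{-n}$, then a sufficiently small standard solid torus neighborhood $V$ of $L_{-n}$ can be taken disjoint from $D$, and since $T = T_+(L_{-n}) \subset V$ the disk $D$ would lie in the complement of $T$, contradicting the non-looseness of $T$. Since $L_{-n}$ shares the rational Seifert surface $\Sigma$ with $T$ (up to radial extension across the thin solid torus they cobound), Theorem~\ref{thm:rationalbennquinineq} applies to give
\[-|\tb_\Q(L_{-n})| + |\rot_\Q(L_{-n})| \leq -\tfrac{1}{r}\chi(\Sigma).\]

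Using the stabilization identities $\tb_\Q(L_{-n}) = \tb_\Q(L) - n$ and $\rot_\Q(L_{-n}) = \rot_\Q(L) - n$, for all sufficiently large $n$ both of these rational numbers are negative, and the displayed inequality simplifies to $\tb_\Q(L_{-n}) - \rot_\Q(L_{-n}) \leq -\tfrac{1}{r}\chi(\Sigma)$. Combined with the rational analogue of the push-off identity $\sl(T_+(L)) = \tb(L) - \rot(L)$, namely $\sl_\Q(T) = \tb_\Q(L_{-n}) - \rot_\Q(L_{-n})$, this yields $\sl_\Q(T) \leq -\tfrac{1}{r}\chi(\Sigma)$ as desired.

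The two auxiliary facts to verify are the rational push-off formula $\sl_\Q(T_+(L)) = \tb_\Q(L) - \rot_\Q(L)$, which should follow from the characteristic-foliation definitions of the rational invariants in \cite{BE} applied to the push-off annulus between $L$ and $T$, and the rational stabilization formulas for $\tb_\Q$ and $\rot_\Q$, which are local computations inside a Darboux ball and thus inherit from the classical case. I expect neither to present real difficulty; the substantive content is the stabilization-and-substitute step together with the small-neighborhood argument for non-looseness of $L_{-n}$.
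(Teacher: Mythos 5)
Your proof is correct and is essentially the paper's argument run in the forward direction rather than by contraposition: the paper assumes $\sl_\Q(T) > -\frac{1}{r}\chi(\Sigma)$, negatively stabilizes until $\tb_\Q \leq 0$, and concludes looseness of $T$ via Proposition~\ref{prop:looseLeghasloosepushoff}, while you stabilize until the rational Bennequin bound collapses to $\tb_\Q - \rot_\Q \leq -\frac{1}{r}\chi(\Sigma)$ and your inline small-neighborhood argument for the non-looseness of $L_{-n}$ is exactly the relevant direction of that proposition. The same two inputs (Theorem~\ref{thm:rationalbennquinineq} and the push-off identity $\sl_\Q(T_+(L)) = \tb_\Q(L) - \rot_\Q(L)$ from Lemma~1.2 of \cite{BE}) do all the work in both versions.
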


\begin{proof}
Assume $\sl_\Q(T) > -\frac{1}{r} \chi(\Sigma)$.  For a Legendrian approximation $L$ of $T$, Lemma~1.2 \cite{BE} gives $\tb_\Q(L)-\rot_\Q(L) > -\frac{1}{r}\chi(\Sigma)$ since the (positive) transverse push-off of $L$ is $T$.  If $\tb_\Q(L) \leq 0$ then the bound of Theorem~\ref{thm:rationalbennquinineq} fails implying that $L$ is loose.  Hence $T$ is loose by Proposition~\ref{prop:looseLeghasloosepushoff}.   If $\tb_\Q(L) > 0$ then we may perform negative stabilizations of $L$ to create $L'$ with $\tb_\Q(L') \leq 0$.  Since $L'$ is also a Legendrian approximation of $T$, we again conclude that $T$ must be loose.
\end{proof}

\subsection{Non-loose unknots}

A knot that bounds an embedded disk is called an {\em unknot} or a {\em trivial knot}.
\begin{thm}[\cite{EF}]\label{thm:EF}
A Legendrian unknot $L$ in an overtwisted contact manifold is loose if $\tb(L) \leq 0$.
\end{thm}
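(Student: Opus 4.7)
The plan is to use convex surface theory on a disk bounded by $L$ in order to locate an overtwisted disk in the complement $M\setminus L$, invoking Giroux's criterion for overtwistedness via contractible dividing curves.

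Since $L$ is an unknot, I would begin by choosing an embedded disk $D \subset M$ with $\partial D = L$. The hypothesis $\tb(L) \leq 0$ gives $\tw_\xi(L,D) = -\tb(L) \geq 0$, which is exactly the twisting condition allowing the Giroux convex realization theorem recalled in the preliminaries to $C^0$--isotope $D$, rel its Legendrian boundary, to a convex disk. Replacing $D$ by this convex representative, its dividing set $\Gamma_D$ meets $\partial D = L$ in $|L\cap \Gamma_D| = 2\tw_\xi(L,D) = -2\tb(L)$ points; so $\Gamma_D$ is a disjoint union of $-\tb(L)$ arcs with endpoints on $L$ together with a (possibly empty) collection of closed curves in $\Int D$.

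The key observation is that any closed component of $\Gamma_D$ immediately produces an overtwisted disk in $M\setminus L$. Indeed, an innermost closed component $\gamma$ bounds a subdisk $\Delta \subset D$ disjoint from $L$, and since $\gamma$ is automatically contractible in the convex disk $D$, Giroux's criterion for overtwistedness yields an overtwisted disk in any sufficiently small neighborhood of $\Delta$. Choosing the neighborhood small enough, this overtwisted disk lies in $M\setminus L$, so $L$ is loose.

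The remaining case has $\Gamma_D$ consisting only of arcs, so $\tb(L) \leq -1$. I would take an outermost arc $\alpha$, cutting off a bigon $B\subset D$ whose dividing set is just $\alpha$; the standard bypass analysis identifies $B$ as a bypass disk and exhibits $L$ as the Legendrian stabilization of a Legendrian unknot $L'$ with $\tb(L') = \tb(L)+1 \leq 0$. Since a stabilization of a loose Legendrian knot is loose (the overtwisted disk in $M\setminus L'$ can be arranged disjoint from the local stabilization), it suffices to prove looseness for $L'$, and induction on $|\tb(L)|$ reduces to the case $\tb(L) = 0$. The main obstacle is then to ensure that, when $\tb(L) = 0$, the dividing set $\Gamma_D$ (now consisting solely of closed curves in $\Int D$) is non-empty; this follows from standard convex-surface facts, for instance via a Poincar\'e--Hopf count on the characteristic foliation of $D$ and the impossibility of a purely $\Sigma_\pm$ convex disk with Legendrian boundary realizing zero twisting. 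Once this is in hand, the closed-curve argument of the previous paragraph completes the proof.
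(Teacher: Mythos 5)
The paper does not actually prove this statement---it is quoted from Eliashberg--Fraser \cite{EF}---so there is no internal proof to compare against; I will instead assess your argument directly. Your overall strategy (make a spanning disk $D$ convex and read off looseness from $\Gamma_D$) is the right one, but the argument as written has a telltale flaw: it never uses the hypothesis that the ambient manifold is overtwisted, and for $\tb(L)<0$ that hypothesis is indispensable. A $\tb=-1$ Legendrian unknot in the tight $(S^3,\xi_\std)$ satisfies every local hypothesis you invoke, yet your destabilization chain would ``reduce'' it to a $\tb=0$ unknot, which does not exist there. The step that fails is exactly the exceptional case of the destabilization lemma: when $D$ is a disk and $\Gamma_D$ is a \emph{single} boundary-parallel arc (which is forced once $\tb(L)=-1$ and there are no closed dividing curves), the Legendrian realization needed to produce the bypass is isolating, no bypass exists, and $L$ need not destabilize---this is precisely the exclusion in Lemma~2.20 of \cite{etnyrelegandtrans}. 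In that configuration $\nbhd(D)$ is a standard tight ball, and one must bring in the ambient overtwistedness together with the fact that overtwistedness survives deleting a Darboux ball (tightness is determined by the complement of a point) to locate an overtwisted disk away from $L$.

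A second, related gap sits in your ``key observation.'' Producing an overtwisted disk from an innermost closed dividing curve $\gamma\subset\Gamma_D$ requires Legendrian-realizing a parallel push-off $\delta$ of $\gamma$ just outside the subdisk $\Delta$, and $\delta$ is isolating exactly when $\gamma$ is the \emph{only} component of $\Gamma_D$---which is your base case $\tb(L)=0$ with one dividing circle, i.e.\ the standard overtwisted disk configuration. So Giroux's criterion does tell you a neighborhood of $D$ is overtwisted, but it does not by itself hand you an overtwisted disk \emph{disjoint from $L=\bdry D$}, which is what looseness demands. One fix is Giroux flexibility: a disk with Legendrian boundary of twisting $0$ and a single interior dividing circle has an $I$--invariant neighborhood contactomorphic to that of $D_{\OT}$ in $(\R^3,\xi_{\OT})$, and a small translate of $D_{\OT}$ in the $z$--direction is an overtwisted disk missing $\bdry D$. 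With these two repairs (and your Stokes/Poincar\'e--Hopf argument that $\Gamma_D\neq\emptyset$, which is correct), the approach can be completed, but as it stands both the base case and the bottom of the induction are genuinely open.
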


Eliashberg-Fraser give a {\em coarse} (i.e.\ up to contactomorphism) classification of non-loose Legendrian unknots in overtwisted contact structures on $S^3$ \cite{EF}.  Etnyre and Vogel independently had worked out a similar proof presented in \cite{etnyreovertwisted}.  Geiges-Onaran offer an alternative proof in \cite{sinemgeiges} which extends to a classification of non-loose Legendrian rational unknots in lens spaces.  They explicitly present the classification for $L(p,1)$ and $L(5,2)$, but the technique works for any lens space.

\begin{thm} [\cite{EF}]\label{thm:nonlooseunknotS3}
 On $S^3$, only the overtwisted contact structure $\xi_{-1}$ with Hopf invariant $-1$ contains a non-loose unknot.  For each integer $n>0$ there is a non-loose unknot with classical invariants $(\tb, \rot) = (n, n-1)$ and one with $(\tb, \rot) = (n,-(n-1))$. Up to contactomorphism these are the only.  
\end{thm}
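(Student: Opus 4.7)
The plan is to handle this in three stages: first establish bounds on $(\tb, \rot)$ and construct non-loose unknots attaining the extremal values; then prove uniqueness of the contactomorphism class; and finally identify the ambient structure as $\xi_{-1}$.

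For the bounds, I would invoke Theorem~\ref{thm:EF} to conclude $\tb(L) \geq 1$ for any non-loose unknot $L$, and then apply the \'Swi\c{a}towski inequality (Theorem~\ref{thm:bennequinineq}) to the Seifert disk $\Sigma$ to get $|\rot(L)| \leq \tb(L) - 1$, together with the parity constraint $\tb(L) + \rot(L) \equiv 1 \pmod 2$ coming from $\chi(\Sigma)=1$. For existence at the extremal values $(n, \pm(n-1))$, for each $n > 0$ I would exhibit non-loose Legendrian unknots via an explicit contact $(\pm 1)$--surgery on a Legendrian link in $(S^3, \xi_\std)$, following Plamenevskaya and Geiges-Onaran. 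The push-off description of the surgery dual recalled in Section~\ref{sec:prelim}, together with standard contact-surgery formulas, determines $\tb$ and $\rot$ of the dual, and non-looseness follows since its complement is contactomorphic to a tight complement in $(S^3, \xi_\std)$.

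Uniqueness is the main obstacle, and simultaneously rules out non-extremal pairs while showing each extremal pair occurs for exactly one non-loose unknot up to contactomorphism. Given a non-loose $L$ with $(\tb(L), \rot(L)) = (n, k)$, I would take a standard convex neighborhood $N$ of $L$ with two dividing curves on $\bdry N$ and perform a convex-surface analysis of the Giroux-realized punctured Seifert disk $\Sigma - N$ inside the tight complement $S^3 - \Int N$. Tightness constrains the dividing set on $\Sigma - N$: the sign data of the $\pm$--regions, which determines $\rot(L)$, forces the dividing arcs to be parallel in a single direction, and this only occurs when $|k| = n-1$. Having fixed this combinatorial model, cutting $S^3$ along the convex disk produces a ball with prescribed boundary dividing curve, and Eliashberg's uniqueness of tight contact structures on the ball then determines $(S^3, \xi)$ up to contactomorphism from $(n, \pm(n-1))$.

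To identify the ambient contact structure as $\xi_{-1}$, I would compute the Hopf invariant $d_3$ from the surgery diagram used in the existence step via Gompf's formula applied to the natural $4$--manifold trace. A direct calculation shows every extremal pair yields the same homotopy class of plane field, namely the one with Hopf invariant $-1$; combined with uniqueness, this rules out the remaining overtwisted contact structures on $S^3$. The delicate points of the whole argument are the dividing-set analysis that both forces extremality and gives uniqueness, and the $d_3$ computation that isolates $\xi_{-1}$.
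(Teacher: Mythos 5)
The paper itself does not prove this statement: it is quoted from Eliashberg--Fraser, and the surrounding text points to \cite{EF}, to the Etnyre--Vogel argument recorded in \cite{etnyreovertwisted}, and to \cite{sinemgeiges} for proofs. Measured against those, your overall architecture --- Bennequin-type bounds plus parity, existence via explicit contact surgery presentations, uniqueness via convex-surface analysis of the complementary solid torus, and identification of the ambient structure by a $d_3$ computation --- is the standard one, and your first, second, and fourth steps are fine in outline.

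The genuine gap is in the uniqueness step, which is the heart of the theorem. You assert that tightness of the complement ``forces the dividing arcs to be parallel in a single direction, and this only occurs when $|k|=n-1$.'' Tightness does not constrain the chord diagram on a single convex meridian disk in this way: any configuration of $n$ disjoint properly embedded arcs with no closed components occurs on a convex disk with a tight $I$--invariant neighborhood, so no disk-local argument can rule out the intermediate rotation numbers. The decisive comparison is with Legendrian unknots of $\tb=-n$ in the tight $S^3$: there the complement is again a solid torus whose convex meridian disk carries exactly $n$ dividing arcs, yet all $n$ values $\rot\in\{-(n-1),-(n-3),\dots,n-1\}$ are realized by tight complements. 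A dividing-set analysis of the kind you describe would therefore ``prove'' the wrong count in that case. What actually cuts the count to two when $\tb=+n$ is the classification of tight contact structures on the solid torus with the specific boundary slope determined by $\tb=+n$ (Giroux, Honda) --- equivalently, a bypass/Farey-graph argument showing that a non-extremal chord diagram supplies non-trivial bypasses whose attachment drives the dividing slope of a boundary-parallel torus in the complement down to the meridional slope of the complementary solid torus, producing an overtwisted disk there. That slope bookkeeping, which is exactly where the sign of $\tb$ enters, is absent from your sketch, so the claim ``this only occurs when $|k|=n-1$'' is unsupported as written. Once it is supplied (or Honda's classification is cited), the remainder of your argument --- cutting along the meridian disk, applying Eliashberg's uniqueness on the tight ball, and computing $d_3$ from the surgery diagrams to isolate $\xi_{-1}$ --- does go through.
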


Compare the following with Corollary 2.4 \cite{etnyreovertwisted}.

\begin{cor}\label{cor:nonlooseunknot}
Suppose an overtwisted contact manifold $(M,\xi)$ contains a non-loose Legendrian unknot $L$.  Then there is a non-loose Legendrian unknot $L'$ in $(S^3, \xi_{-1})$ and a tight contact structure $\xi'$ on $M$ such that $(M,\xi) = (M,\xi') \# (S^3,\xi_{-1})$ where $L$ is the image of $L'$.  In particular, a non-loose unknot has classical invariants $(\tb,\rot) = (n, \pm(n-1))$ for some positive integer $n$.
\end{cor}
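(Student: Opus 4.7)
The plan is to find a convex 2--sphere $S\subset M\setminus L$ bounding a 3--ball $B$ that contains $L$, split $(M,\xi)$ as a contact connected sum along $S$, and then invoke Theorem~\ref{thm:nonlooseunknotS3} to identify the factor containing $L$ with $(S^3,\xi_{-1})$.

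Because $L$ is an unknot, I would first choose an embedded disk $D\subset M$ with $\bdry D=L$ and let $B$ be a small regular neighborhood of $D$, taken so that $L\subset\Int B$; then $B$ is a 3--ball and $S:=\bdry B$ is a 2--sphere sitting entirely in the tight region $M\setminus L$. A generic $C^\infty$--small perturbation makes $S$ convex, and Giroux's criterion for a convex sphere in a tight contact manifold forces the dividing set $\Gamma_S$ to consist of exactly one simple closed curve.

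Next I would cap off each side of $S$ with a standard Darboux ball matched to $\Gamma_S$: capping the exterior $R=\overline{M\setminus B}$ gives a closed contact manifold $(M',\xi')$, while capping $B$ gives a closed contact manifold $(S^3,\widetilde\xi)$ that contains $L$. Since $R$ is tight, the gluing theorem for tight contact structures along a convex sphere with a single dividing curve (a consequence of Eliashberg's uniqueness of tight contact structures on $B^3$ with prescribed convex boundary) implies that $\xi'$ is tight. The same argument applied to the tight piece $B\setminus L$ together with the Darboux ball shows $(S^3,\widetilde\xi)\setminus L$ is tight, so $L$ is non-loose in $(S^3,\widetilde\xi)$; in particular $\widetilde\xi$ must be overtwisted, since otherwise $(M',\xi')\#(S^3,\widetilde\xi)$ would be tight, contradicting the overtwistedness of $(M,\xi)$. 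By the very definition of contact connected sum, $(M,\xi)=(M',\xi')\#(S^3,\widetilde\xi)$. Theorem~\ref{thm:nonlooseunknotS3} then identifies $(S^3,\widetilde\xi)$ with $(S^3,\xi_{-1})$ and records the classical invariants of the resulting non-loose unknot $L'$ as $(\tb,\rot)=(n,\pm(n-1))$ for some positive integer $n$. Since the Seifert disk $D$ lies inside $B$ by construction, these invariants coincide with those of $L$ in the original $(M,\xi)$.

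The main obstacle will be the convex--surface bookkeeping: ensuring that $S$ can actually be made convex with exactly one dividing curve, and that both Darboux--ball cappings yield tight contact structures on the intended pieces. These rest on well--developed convex surface theory (Giroux) and Eliashberg's uniqueness of tight contact structures on $B^3$ with prescribed convex boundary, but must be invoked carefully to guarantee that the reconstructed decomposition really is the contact connected sum asserted.
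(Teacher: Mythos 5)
Your argument is correct in outline and reaches the same decomposition, but it is packaged differently from the paper's proof, and the difference is worth noting. The paper never decomposes the overtwisted manifold $(M,\xi)$ directly: it passes to the tight complement $M-L$, observes that (for $M\not\cong S^3$) this is the reducible manifold $M\#(S^3-L')$, applies Colin's prime decomposition theorem for \emph{tight} contact manifolds to split off a tight $(M,\xi^0)$ and a tight $(S^3-L',\xi')$, and only then extends the contact structure back across $L$ and $L'$ to recover $(M,\xi)=(M,\xi^0)\#(S^3,\xi'')$. You instead keep $L$ inside an explicit ball $B=\nbhd(D)$ and split $(M,\xi)$ along the convex sphere $\bdry B$, which avoids both the case split $M\cong S^3$ and the somewhat delicate ``extend $\xi\vert_{M-L}$ across $L$'' step; that is a genuine simplification of the bookkeeping. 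The cost is that you must know that capping a tight piece with convex sphere boundary (connected dividing set) by a standard ball yields a tight manifold --- this is exactly where your justification is imprecise. Eliashberg's uniqueness of tight contact structures on $B^3$ only identifies the capping ball; it does not by itself let you push an overtwisted disk of the capped manifold off the sphere. That step is Colin's gluing/sphere-isotopy theorem (the same machinery behind the prime decomposition result the paper cites), and it is also what you are implicitly using when you assert that $(M',\xi')\#(S^3,\widetilde\xi)$ would be tight if both summands were. So your route is valid, but the deep input is Colin's theorem in both proofs; you should cite it rather than Eliashberg's $B^3$ classification. The final observation that the classical invariants of $L$ and $L'$ agree because the Seifert disk lies in $B$ matches the paper's conclusion.
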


\begin{proof}
We may assume $M \not \cong S^3$ since the result follows directly from Theorem~\ref{thm:nonlooseunknotS3}.  Thus, since $L$ is an unknot, $M-L$ is reducible and $M-L \cong M \# (S^3-L')$ for an unknot $L'$ in $S^3$.  Since the restriction of $(M,\xi)$ to $M-L$ is tight, by the prime decomposition of tight contact manifolds \cite{colin}, there exists a tight contact structure $\xi^0$ on $M$ and a tight contact structure $\xi'$ on $S^3-L'$ so that  $(M-L, \xi\vert_{M-L}) = (M,\xi^0) \# (S^3-L', \xi')$.  Then $(B^3-L', \xi\vert_{B^3-L'})$, the complement of a neighborhood of a point in $(S^3-L', \xi')$, is contactomorphic to a contact submanifold of $(M-L, \xi\vert_{M-L})$.  As such,  $\xi\vert_{M-L}$ extends across $L$ to $\xi$ so that $L$ is Legendrian and thus it pulls back to an extension of $\xi\vert_{B^3-L'}$ on $B^3-L'$ across $L'$ to a contact structure on $B^3$, and hence to an extension of $\xi'$ on $S^3-L'$ to a contact structure $\xi''$ on $S^3$,  in which $L'$ is Legendrian.  Since $(M,\xi) = (M,\xi^0) \# (S^3, \xi'')$ is overtwisted, $\xi''$ cannot be tight.  Thus $L'$ in $(S^3,\xi'')$ is a non-loose Legendrian unknot.  Theorem~\ref{thm:nonlooseunknotS3} implies $\xi'' =\xi_{-1}$ and describes the possible classical invariants for $L'$. Hence these are the possible classical invariants for $L$ as well.
\end{proof}

Plamenevskaya gives surgery descriptions of the non-loose unknots in $(S^3, \xi_{-1})$ in Figure~3 of \cite{plamenevskaya}.  Corollary~\ref{cor:nonlooseunknot} shows that a non-loose unknot in some other overtwisted manifold may be locally presented by one of these surgery descriptions.

\section{Depth, Tension, and Order for Legendrian Knots}\label{sec:Leg}
We define two geometric invariants and one algebraic invariant of Legendrian knots in a closed overtwisted contact manifold that give measures of non-looseness.  We first address the geometric ones for unoriented knots in section~\ref{sec:dt} and then consider their refinements for oriented knots in section~\ref{sec:ordt}.  Thereafter, in section~\ref{sec:loss} we use the LOSS invariant of \cite{LOSS} to define an algebraic invariant for null-homologous Legendrian knots.

\subsection{Depth and tension} \label{sec:dt}
 
\begin{defn}
The {\em depth}, $d(L)$, of a Legendrian knot $L$ in an overtwisted contact $3$--manifold is the minimum geometric intersection number $|L \cap D|$ taken over all overtwisted disks $D$ transversally intersecting $L$.  The knot $L$ is loose if and only if $d(L) = 0$. 
\end{defn}

\begin{remark}
  By Proposition~\ref{prop:sot}, the depth of a non-loose knot is realized with a standard overtwisted disk.
\end{remark}

\begin{defn}
The {\em tension}, $t(L)$, of a Legendrian knot $L$ in an overtwisted contact $3$--manifold is the minimum total number of stabilizations required to make $L$ loose.   The knot $L$ is loose if and only if $t(L)=0$. 
\end{defn}

The following theorem shows that tension is well-defined. 

\begin{thm}\label{thm:stabtoloose}
There is a finite sequence of stabilizations that loosens a Legendrian knot in a closed, overtwisted manifold.
\end{thm}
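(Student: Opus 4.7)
\emph{Plan.} The plan is to find an overtwisted disk $D \subset M$, smoothly push $L$ off $D$ to obtain a loose Legendrian in the same smooth knot type, and then invoke the Fuchs-Tabachnikov phenomenon to realize that smooth isotopy as a Legendrian isotopy after sufficiently many stabilizations.

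Since $(M,\xi)$ is overtwisted, first fix an overtwisted disk $D \subset M$. After a $C^\infty$-small Legendrian perturbation, we may assume $L \pitchfork D$, so that $L \cap D$ is a finite set of transverse intersection points. Because $L$ has codimension $2$ and $D$ codimension $1$ in the $3$-manifold $M$, push $L$ off $D$ at each intersection by a small smooth isotopy to obtain a smooth knot $L'$ disjoint from $D$. Inside a tubular neighborhood of $L'$ that avoids the compact set $D$, a standard Legendrian approximation produces a Legendrian knot $\tilde L$ smoothly isotopic to $L$ and disjoint from $D$. Since $D$ then lies in the complement of $\tilde L$, the knot $\tilde L$ is loose.

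Next invoke the Fuchs-Tabachnikov phenomenon: any two smoothly isotopic Legendrian knots in a contact $3$-manifold have common stabilizations. This follows from the parametric $h$-principle for Legendrian embeddings, or, more concretely, by covering a smooth isotopy $\{L_t\}$ from $L$ to $\tilde L$ by finitely many Darboux charts and applying the classical Fuchs-Tabachnikov theorem on $\mathbb{R}^3$ inside each chart, with the stabilizations accumulating as one steps from chart to chart. Applied to $L$ and $\tilde L$, this yields non-negative integers $a,b,c,d$ with $L_{+a,-b}$ Legendrian isotopic to $\tilde L_{+c,-d}$. Since $\tilde L$ is loose, every one of its stabilizations is loose (the overtwisted disk in its complement persists under stabilization), hence $L_{+a,-b}$ is loose. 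Therefore $a+b$ stabilizations suffice to loosen $L$.

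The hard part will be this last step: passing from a smooth isotopy to a Legendrian one at the cost of stabilizations. If a black-box application of Fuchs-Tabachnikov in an arbitrary contact $3$-manifold is to be avoided, one may instead argue by induction on $k = |L \cap D|$: after Legendrian-isotoping $L$ to bring a chosen $p \in L \cap D$ close to $\bdry D$, work inside a Darboux ball $B$ containing $p$ and an arc of $\bdry D$ and apply the $\mathbb{R}^3$-version of Fuchs-Tabachnikov to a stabilized local arc of $L$ through $p$, so that this arc can be Legendrian-isotoped rel endpoints to detour around $\bdry D \cap B$ and thus avoid $D \cap B$, reducing the intersection count to $k-1$.
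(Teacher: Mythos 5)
Your argument is correct, but it takes a genuinely different route from the paper's. The paper works entirely locally: it takes a standard overtwisted disk $D$ transverse to $L$, runs a radial arc from one intersection point out to $\partial D$, and exhibits an explicit Darboux model (Figure~\ref{fig:legstabtoloosen}) in which a \emph{single} stabilization of $L$ slides that strand around $\partial D$ and removes exactly one point of $L\cap D$; induction on $|L\cap D|$ finishes. You instead push $L$ off $D$ smoothly, Legendrian-approximate the result to get a loose knot $\tilde L$ in the same smooth isotopy class, and invoke the stabilization (Fuchs--Tabachnikov) theorem to find common stabilizations of $L$ and $\tilde L$; together with the (correct) observation that stabilizations of a loose knot stay loose, this does prove finiteness, and the general form of the stabilization theorem you need is available in the literature (e.g.\ it is stated for arbitrary contact $3$--manifolds in \cite{etnyrelegandtrans}), so there is no gap. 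The trade-off is effectivity: the integers $a,b$ produced by Fuchs--Tabachnikov are completely uncontrolled, so your proof establishes that tension is well defined but yields no bound on it, whereas the paper's one-stabilization-per-intersection argument is precisely what gives Lemma~\ref{lem:stabboundsdepth} ($t(L)\leq d(L)$, the second inequality of Theorem~\ref{thm:mainineq}) and, by tracking the sign of each intersection against the sign of the stabilization, Remark~\ref{rmk:stabsign} and the oriented refinements of section~\ref{sec:ordt}. Your closing inductive variant essentially rediscovers the paper's localization but still spends an unspecified number of stabilizations per intersection point, so it too misses the sharp count. Note finally that both arguments use that $\partial D$ is free in the interior of the closed manifold so that strands can be slid off the edge of the disk; this is exactly what fails in the properly embedded example of Vela-Vick mentioned at the end of the paper.
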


\begin{proof}
Let $L$ be a Legendrian knot in the overtwisted manifold $(M,\xi)$ and let $D$ be a standard overtwisted disk transverse to $L$ so that $|L \cap D|$ is a finite set.  If $L$ is disjoint from $D$, then $L$ is already loose.  So assume $|L \cap D|=n>0$.  Since $D$ is a standard overtwisted disk, there exists a contactomorphism $f$ of a neighborhood of $D$ with a neighborhood of $D_\OT$.  Let $\lambda$ be a radial arc in $D_\OT$ from a point of $f(L \cap D)$ to $\bdry D_\OT$ that is disjoint from the origin and the other points of $f(L \cap D)$.  Then there
 is a contactomorphism  $g$ that identifies a neighborhood $(N=\nbhd(\lambda), \xi\vert_N)$ with $(\R^3, \xi_\std)$ such that $g(D_\OT \cap N)$ is the half plane $\{y \geq0, z=0\}$, $g(\lambda)$ is the arc $\{x=0,y\in[0,1], z=0\}$, and $g(f(L) \cap N)$ is the line $\{x=z, y=1\}$.  Then, depending on the orientation of the intersection, the positive or negative stabilization as shown in the front projection of Figure~\ref{fig:legstabtoloosen} reduces $|L \cap D|$ by one. (View the horizontal line as the boundary of the overtwisted disk with the disk going into the page.)  In this manner, repeated stabilizations make $L$ disjoint from $D$ and thus loosen $L$.
\end{proof}

\begin{figure}
\footnotesize
\centering
\psfrag{L}[][]{$L$}
\psfrag{P}[][]{$L_+$}
\psfrag{N}[][]{$L_-$}
\includegraphics[height=1.5in]{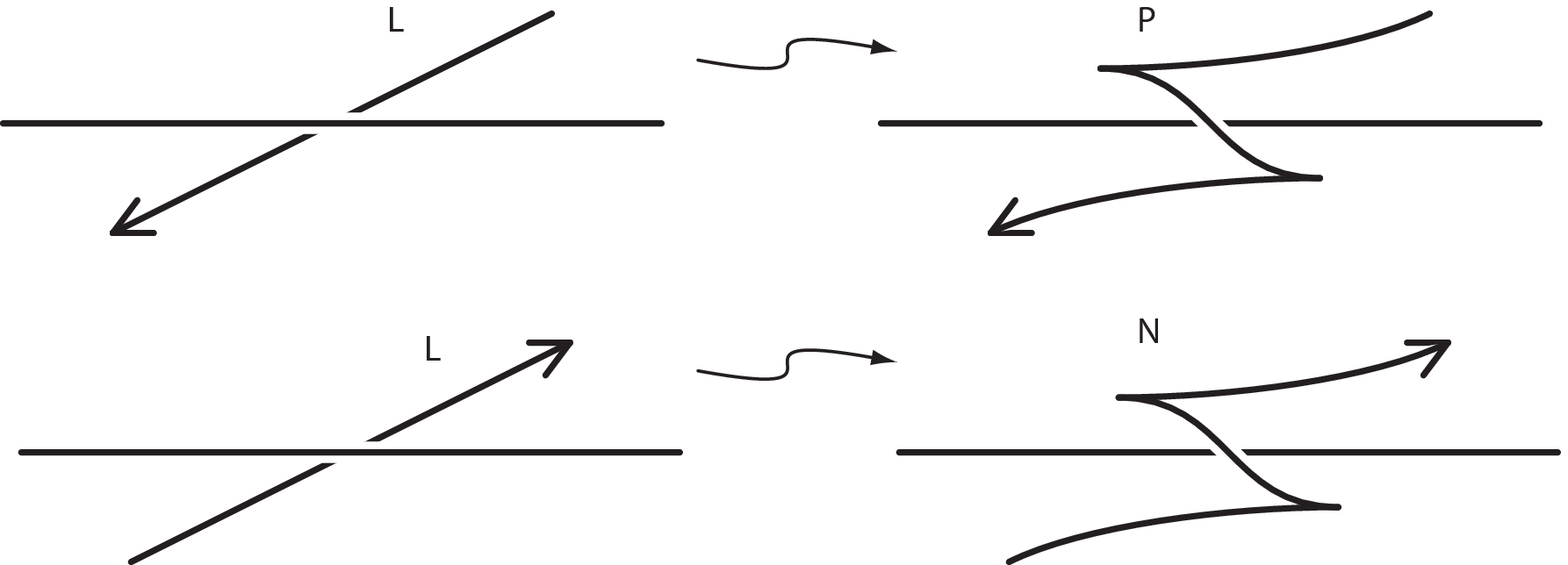}
\caption{}
\label{fig:legstabtoloosen}
\end{figure}

\begin{remark}\label{rmk:stabsign}
Figure~\ref{fig:legstabtoloosen} shows, via the front projection, how stabilizations of oriented Legendrian knots in $(\R^3, \xi_\std)$ may remove intersections with the half plane $\{z=0,y\geq0\}$.   Orienting this half plane so that $\partial/\partial z$ is the positive normal and the boundary orientation is $\partial/\partial x$, one sees that positive stabilizations remove negative intersections while negative stabilizations remove positive intersections.  
\end{remark}

The proof of Theorem~\ref{thm:stabtoloose} offers a relationship between depth and tension.

\begin{lemma}\label{lem:stabboundsdepth}
For a Legendrian knot $L$ in a closed overtwisted contact manifold, $t(L) \leq d(L)$.
\end{lemma}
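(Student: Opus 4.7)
The plan is to read off the inequality directly from the construction already used to prove Theorem~\ref{thm:stabtoloose}, so very little extra work is needed. First I would invoke the remark following the definition of depth to pick a \emph{standard} overtwisted disk $D$ that realizes the depth of $L$, i.e.\ a standard overtwisted disk with $|L\cap D| = d(L)$. If $d(L)=0$ the knot is already loose and $t(L)=0$, so the inequality is trivial, and I can assume $d(L)\geq 1$.

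Next I would run through the local procedure in the proof of Theorem~\ref{thm:stabtoloose} at each intersection point $p\in L\cap D$ one at a time. The point of that argument is that in the standard model around $p$ supplied by the contactomorphism of a neighborhood of $D$ with a neighborhood of $D_{\OT}$, a single stabilization of $L$ (positive or negative, depending on the sign of the intersection as recorded in Remark~\ref{rmk:stabsign}) suffices to push $L$ off $D$ at $p$ without reintroducing any new intersection with $D$. Since the stabilization takes place in a small neighborhood of a radial arc from $p$ to $\bdry D$ that avoids the origin and the other intersection points, performing these stabilizations one at a time at the remaining points of $L\cap D$ respects the condition that the arc used at each stage avoids all other intersections; hence each stabilization reduces $|L\cap D|$ by exactly one.

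After at most $d(L)$ stabilizations, the resulting Legendrian knot $L'$ is disjoint from $D$, so $L'$ is loose. By definition of tension, $t(L)\leq d(L)$.

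No real obstacle is anticipated here; the only point that deserves care is ensuring the successive stabilizations can indeed be carried out independently, which follows because each one is supported in a small neighborhood of an arc in $D$ that can be chosen disjoint from the remaining intersection points of $L$ with $D$.
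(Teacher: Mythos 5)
Your proof is correct and follows the same route as the paper: take a standard overtwisted disk realizing $d(L)$ (via the remark after the definition of depth) and run the stabilization procedure from the proof of Theorem~\ref{thm:stabtoloose} to remove the intersections one at a time. The paper's own proof is exactly this, stated more tersely, with the additional minor observation that any Legendrian isotopy needed to arrange transversality with $D$ can be performed without increasing $|L\cap D|$.
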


\begin{proof}
Let $D$ be a standard overtwisted disk for which $|L \cap D|=d(L)$ and then apply the proof of Theorem~\ref{thm:stabtoloose}.  Note that a Legendrian isotopy making $L$ transverse to $D$ may be done without increasing $|L \cap D|$.
\end{proof}

The following two theorems allow us to prove Theorem~\ref{thm:tensionlessthandepth}, that the inequality of Lemma~\ref{lem:stabboundsdepth} is sometimes strict.

\begin{thm}\label{thm:bigdepth}
Suppose $(+1)$--surgery on a Legendrian knot $L$ with tight complement yields an overtwisted manifold with surgery dual knot $L^*$.  Then $d(L^*)=1$ if and only if $L$ is a stabilization.

\end{thm}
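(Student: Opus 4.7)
The plan is to prove the two directions separately.

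\emph{Sufficiency.} Suppose $L = L_0^\pm$ is a stabilization of some Legendrian $L_0$. I would exhibit an overtwisted disk in the surgered manifold that meets $L^*$ exactly once by working in a Darboux chart around the stabilization cusp. Via the Ding--Geiges--Stipsicz contact-surgery/Lutz-twist correspondence, contact $(+1)$-surgery on a stabilized Legendrian is contactomorphic to a half-Lutz twist along the transverse push-off of $L_0$, and a Lutz twist contributes a meridional overtwisted disk of the transverse knot --- which, in the surgered picture, meets the new core $L^*$ in a single point. Since $L^*$ has tight complement (the complement of $L$) inside an overtwisted manifold, $L^*$ is non-loose, so the bound $d(L^*) \leq 1$ tightens to $d(L^*) = 1$.

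\emph{Necessity.} Suppose $d(L^*) = 1$ and let $D$ be an overtwisted disk realizing this depth. After isotopy arrange $D \cap \nbhd(L^*) = \Delta$ to be a single meridian disk of $\nbhd(L^*)$, so that $A := D \setminus \Int(\Delta)$ is an annulus properly embedded in the tight complement $M \setminus \nbhd(L)$. The inner boundary $\gamma = \bdry \Delta$ is a Legendrian meridian of $\nbhd(L^*)$, which in the original manifold is a ruling curve of slope $+1$ on $\bdry \nbhd(L)$ with respect to the contact framing of $L$; the outer boundary $\bdry D$ is Legendrian with $\tb(\bdry D) = 0$ measured against $D$. Making $A$ convex with Legendrian boundary, I would compute the dividing set $\Gamma_A$: the identity $\tb(\gamma) = -1$ for a Legendrian meridian of a standard tight solid torus, combined with the agreement of the $A$- and $\Delta$-framings on $\gamma$, yields $|\gamma \cap \Gamma_A| = 2$; meanwhile $\tb(\bdry D) = 0$ and the agreement of the $A$- and $D$-framings on $\bdry D$ yield $|\bdry D \cap \Gamma_A| = 0$. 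Tightness of the complement then lets one discard inessential closed components, leaving $\Gamma_A$ as a single boundary-parallel arc with both endpoints on $\gamma$.

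By Honda's imbalance principle, this boundary-parallel arc produces a bypass for the convex torus $\bdry \nbhd(L)$ attached from the $A$-side (from outside $\nbhd(L)$). Honda's bypass-attachment formula, with starting dividing slope $0$ on $\bdry \nbhd(L)$ in $(\mu, \lambda_\xi)$-coordinates and attaching arc in the $+1$ direction along $\gamma$, thickens $\nbhd(L)$ to a tight standard solid torus $\nbhd' \supset \nbhd(L)$ whose boundary has dividing slope $+1$ --- equivalently, $\tb(L) + 1$ in Seifert coordinates. The existence of such a thickening is precisely Honda's criterion that $L$'s standard neighborhood is not of maximal thickness, which characterizes $L$ as destabilizable: a Legendrian divide on $\bdry \nbhd'$ realizes a Legendrian $L_0$ with $\tb(L_0) = \tb(L) + 1$ such that $L = L_0^\pm$. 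The main technical step is the dividing-set analysis on $A$, and the principal subtlety is the sign bookkeeping in Honda's bypass formula needed to ensure that the resulting slope shift goes in the direction that characterizes destabilization rather than just registering an arbitrary Farey move.
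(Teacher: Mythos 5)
The sufficiency direction has a genuine gap. The correspondence you invoke --- that contact $(+1)$--surgery on a single stabilized Legendrian knot is contactomorphic to a half Lutz twist along the transverse push-off of the destabilized knot --- cannot be correct as stated: a Lutz twist does not change the underlying smooth manifold, whereas contact $(+1)$--surgery on $L$ is a topological $(\tb(L)+1)$--surgery and generically does change it. (The Ding--Geiges--Stipsicz Lutz-twist theorem realizes a $\pi$--Lutz twist by $(+1)$--surgeries on a \emph{two-component} link, a knot together with a suitably stabilized push-off, arranged precisely so that the topological surgery is trivial.) Even granting some Lutz-tube picture, you never verify that the overtwisted disk meets the surgery dual $L^*$ --- rather than the core of the Lutz tube --- exactly once; this is asserted, not proved. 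The paper's construction (following Lisca--Stipsicz) is direct and is what you should use: writing $L$ as a stabilization of a Legendrian push-off of $L'$ produces an annulus $A$ with $\bdry A = L \cup L'$, $\tw_\xi(L',A)=0$ and $\tw_\xi(L,A)=+1$; since the $(+1)$--surgery meridian is exactly the $A$--framing of $L$, the annulus caps off with a single meridian disk of the surgery solid torus to a disk $D$ with $\bdry D = L'$ and $\tb(\bdry D)=0$, and $L^*$ meets $D$ once because it meets that meridian disk once. Non-looseness of $L^*$ (equivalent to tightness of the complement of $L$) then upgrades $d(L^*)\leq 1$ to $d(L^*)=1$, as you say.

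Your necessity argument is essentially sound but takes a longer road than the paper. The paper extends $D \setminus \nbhd(L^*)$ radially through the solid torus glued in by $(-1)$--surgery on $L^*$, so that the resulting convex annulus has $L$ itself as a boundary component with $\tw_\xi(L,A)=+1$ and the image of $\bdry D$ as the other with $\tw_\xi = 0$; the unique dividing arc then has both endpoints on $L$ and is boundary parallel, which exhibits the destabilization directly via the standard lemma, with no bypass-attachment bookkeeping. Your version, keeping the annulus in the complement and converting the boundary-parallel arc at the meridian $\gamma$ into a bypass that thickens $\nbhd(L)$ to dividing slope $\tb(L)+1$, is a standard equivalent (in effect Honda's Twist Number Lemma), but it carries exactly the burdens you flag plus one you do not: the attaching arc meets the two dividing longitudes of $\bdry\nbhd(L^*)$ in three points only by running the long way around $\gamma$, and you still owe the check that the resulting bypass is nontrivial. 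Both of these are handled for free in the paper's formulation.
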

\begin{proof}
Lemma~3.1 of \cite{liscastipsicz} shows that $(+1)$--surgery on a destabilizable Legendrian knot in $(S^3,\xi_\std)$  yields an overtwisted manifold. (See also the proof of Theorem~1.2 of \cite{DGS} as well as \cite{ozbagci}.) To prove this they construct an overtwisted disk that is a meridional disk of the surgery solid torus, one that the surgery dual intersects once.  This proof in fact works for $(+1)$--surgery on any destabilizable knot in any contact manifold.  Instead of describing framings in terms of the Thurston-Bennequin number, we'll use the twist number.

Assume $L$ is a stabilization of a Legendrian push-off of a Legendrian knot $L'$ in the contact manifold $(M,\xi)$.  Then there is an annulus $A$ with $\bdry A = L \cup L'$ such that $\tw_\xi(L',A)=0$ and $\tw_\xi(L,A)=+1$.  (Lisca-Stipsicz show that in $(S^3, \xi_\std)$ the annulus $A$ meets $L'$ with framing $\tb(L')$ and $L$ with framing $\tb(L)+1$  \cite{liscastipsicz}.)  Then, in the contact manifold $(M_+,\xi_+)$ obtained by $(+1)$--surgery on $L$, this annulus extends across a meridional disk of a solid torus neighborhood of the surgery solid torus to complete to a disk $D$.  Observe $\bdry D$ is the image of $L'$.  As such, since $\tw_\xi(L',A)=0=\tw_{\xi_+}(\bdry D, D) = \tb(\bdry D)$,  $D$ is an overtwisted disk and the resulting manifold is overtwisted.  By construction, the surgery dual knot $L^*$ intersects $D$ once. 
Thus $d(L^*) =1$ if the complement of $L^*$ is tight.  The complement of $L^*$ is tight if and only if the complement of $L$ is tight.

\smallskip
Now assume $L^*$ is a non-loose knot in an overtwisted manifold $(M_+,\xi_+)$ with $d(L^*)=1$.  Then, by Proposition~\ref{prop:sot}, there is a standard overtwisted disk $D$ which $L^*$ intersects once.  Then, in the contact manifold $(M,\xi)$ obtained by $(-1)$--surgery on $L^*$, $D-L^*$ extends to an annulus $A$ whose boundary is the image of $\bdry D$ and the surgery dual to $L^*$, Legendrian knots $L'$ and $L$ respectively.  Furthermore $\tw_\xi(L',A) = \tw_{\xi_+}(\bdry D,D) = \tb(\bdry D) = 0$ and, due to performing $(-1)$--surgery on $L^*$, $\tw_\xi(L,A)=+1$.  Hence the contact planes of $\xi$ have non-positive twisting along each component of $\bdry A = L \cup L'$ relative to the framing by $A$.   Therefore we may realize $A$ as a convex surface with dividing set $\Gamma$ consisting of a possibly empty set of simple closed curves and a single arc with its endpoints in $L$ .  Since this arc of $\Gamma$ is boundary parallel in $A$, it signifies that $L$ is a stabilization (e.g.\ Lemma 2.20 \cite{etnyrelegandtrans}). 

\end{proof}

\begin{thm}\label{thm:tensionone}
Suppose $(+1)$--surgery on a Legendrian knot $L$ in $(S^3,\xi_\std)$ yields an overtwisted manifold with surgery dual knot $L^*$.  If  $\tb(L) < -1$, $\rot(L)<0$, and $\tb(L) + \rot(L)+2 < \chi(L)$, then $t(L^*) = 1$.

\end{thm}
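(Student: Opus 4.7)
The plan is to construct an overtwisted disk $D$ in $(M_+,\xi_+)$ whose boundary is Legendrian isotopic to the negative stabilization $L^*_-$ of the surgery dual $L^*$. Pushing such a disk slightly off its boundary exhibits an overtwisted disk disjoint from $L^*_-$, so $L^*_-$ is loose and $t(L^*)\le 1$. Since $L^*$ is non-loose (because $L\subset(S^3,\xi_\std)$ has tight complement), this gives $t(L^*)=1$.

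The construction parallels the proof of Theorem~\ref{thm:bigdepth} but in the ``opposite direction.'' There, for a destabilizable $L=K_+$, a convex annulus $A$ from $K$ to $L$ with $\tw_\xi(K,A)=0$ and $\tw_\xi(L,A)=+1$ caps off after $(+1)$-surgery to an overtwisted disk bounded by the parent $K$. In the present setting $L$ need not be a stabilization, so I instead construct a convex annulus $A$ running from $L$ to a Legendrian ``descendant'' $L''$ realized on a slightly larger convex torus. Using Honda's classification of tight contact structures on $T^2\times I$, I take a thickened convex neighborhood $N'\supset N(L)$ whose shell $N'\setminus \Int N(L)$ is a \emph{negative} basic slice, and let $L''$ be a Legendrian ruling on $\partial N'$ topologically isotopic to $L$; the basic-slice calculation yields $\tb(L'')=\tb(L)-1$ and $\rot(L'')=\rot(L)-1$, and the convex annulus $A$ between $L$ and $L''$ has dividing set forced by the basic-slice type so that $\tw_\xi(L,A)=+1$ and $\tw_\xi(L'',A)=0$. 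After $(+1)$-contact surgery on $L$, the annulus $A$ caps off via a meridional disk, exactly as in the proof of Theorem~\ref{thm:bigdepth}, to a disk $D\subset M_+$ with $\partial D=L''$ and $\tb_{\xi_+}(\partial D)=-\tw_\xi(L'',A)=0$, so $D$ is overtwisted. A Farey-graph computation on the common boundary $\partial V=\partial V^*$ identifies the image of $L''$ in $M_+$ with the Legendrian isotopy class of $L^*_-$.

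Each hypothesis has a specific role. The bound $\tb(L)<-1$ is needed so that the thickening $N'\supset N(L)$ with the required dividing slope actually fits inside the tight $(S^3,\xi_\std)$. The sign condition $\rot(L)<0$ selects the correct sign of the basic slice, ensuring that $L''$ realizes a \emph{negative} stabilization of $L^*$ after surgery rather than a positive one. Finally, $\tb(L)+\rot(L)+2<\chi(L)$ is what the surgery calculation turns into the statement that $L^*_-$ remains consistent with the rational Bennequin inequality of Theorem~\ref{thm:rationalbennquinineq}; without it, that inequality would obstruct the boundary of our proposed overtwisted disk from being the Legendrian knot $L^*_-$, and the construction could not close up. The main obstacle I anticipate is the Farey-graph bookkeeping in the final identification step, which requires carefully tracking contact framings, dividing slopes, and rotation numbers through the surgery gluing on $\partial V=\partial V^*$, with particular attention to sign conventions so that the negative basic slice in $(S^3,\xi_\std)$ matches the negative stabilization of $L^*$ in $(M_+,\xi_+)$.
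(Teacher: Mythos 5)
There is a genuine gap here, and it is fatal to the construction rather than a fixable detail. First, the boundary of your proposed overtwisted disk cannot be Legendrian isotopic to $L^*_-$: the disk $D$ you build by capping the annulus $A$ with a meridian disk of the surgery torus has null-homotopic boundary in $M_+$, whereas $L^*_-$ is topologically isotopic to $L^*$, the core of the surgery solid torus, which has homological order $|\tb(L)+1|>1$ whenever $\tb(L)<-2$ and is in any case generically a nontrivial knot. (In the capping-off construction of Theorem~\ref{thm:bigdepth}, the boundary of the resulting disk is the image of the \emph{push-off} $L'$, an unknot in $M_+$ --- not the dual knot or any stabilization of it.) Second, the annulus you posit, with $\bdry A = L\cup L''$, $\tw_\xi(L,A)=+1$ and $\tw_\xi(L'',A)=0$, is exactly the configuration that the second half of the proof of Theorem~\ref{thm:bigdepth} shows forces $L$ to be a stabilization: making $A$ convex, its dividing set meets $L$ in two points and $L''$ not at all, so it contains a boundary-parallel arc destabilizing $L$. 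Since the whole point of Theorem~\ref{thm:tensionone} (in the proof of Theorem~\ref{thm:tensionlessthandepth}) is to apply it to \emph{non-destabilizable} $L$, your annulus cannot exist in the cases of interest; if it did, it would give $d(L^*)=1$ and destroy the strict inequality $t(L^*)<d(L^*)$. Finally, the sign is off: with $\rot(L)<0$ it is the \emph{positive} stabilization $L^*_+$ that becomes loose, and your reading of the hypothesis $\tb(L)+\rot(L)+2<\chi(L)$ is inverted --- it is not there to keep the Bennequin inequality from ``obstructing'' the disk, it is there precisely to make the stabilized dual \emph{violate} that inequality.

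The paper's actual argument is indirect and avoids constructing any disk. It takes the surgery diagram $L\cup L'_+$ (a positive stabilization of a Legendrian push-off of $L$, with $(+1)$--surgery on $L$) as a presentation of $L^*_+$, computes $\tb_\Q(L^*_+)=\frac{-1}{\tb(L)+1}$ and $\rot_\Q(L^*_+)=\frac{\rot(L)+\tb(L)+1}{\tb(L)+1}$ from the linking-matrix formulas of \cite{sinemgeiges}, and then checks that under the hypotheses $\tb(L)<-1$, $\rot(L)<0$, $\tb(L)+\rot(L)+2<\chi(L)$ these values violate the rational Bennequin bound of Theorem~\ref{thm:rationalbennquinineq}. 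Since every non-loose rationally null-homologous Legendrian knot must satisfy that bound, $L^*_+$ is loose and $t(L^*)=1$. If you want to salvage a constructive approach, you would need to produce an overtwisted disk \emph{disjoint from} (not bounded by) a single stabilization of $L^*$, which is a different and harder problem; the invariant-based contradiction is the efficient route here.
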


\begin{proof}
Assume $(M_+,\xi_+)$ is the overtwisted manifold produced by $(+1)$--surgery on $L$.

Let $L'$ be a Legendrian push-off of $L$.  Then the image of $L'$ in $(M_+,\xi_+)$ is a Legendrian push-off of the surgery dual knot $L^*$.  Hence the link $L \cup L'$ with $(+1)$--surgery on $L$ is a surgery diagram for $L^*$ in $(M_+, \xi_+)$.  Similarly, $L \cup L'_+$ with $(+1)$--surgery on $L$ is a surgery diagram for the stabilization $L^*_+$ of $L^*$ in $(M_+,\xi_+)$.

Given that $(M_+,\xi_+)$ is overtwisted by assumption and $L$ is a Legendrian knot in $S^3$, $L^*$ is a non-loose knot.  Hence $t(L^*) >0$.  To show that $t(L^*)=1$ we will demonstrate that the rational classical bound for non-loose knots, Theorem~\ref{thm:rationalbennquinineq}, does not hold for a positive stabilization of $L^*$ if $- \chi (L) < -(\rot(L)+\tb(L)+2)$.   To do so, we will calculate $\tb_\Q(L^*_+)$ and $\rot_\Q(L^*_+)$ using the methods in \cite{sinemgeiges}.  

Observe that $(+1)$--surgery on $L$ is a topological $\tb(L)+1$ -- surgery on $L$;  hence the homological order $r$ of $L^*$ is $|\tb(L)+1|$.  If $\Sigma$ is a rational Seifert surface for $L^*_+$ of minimal genus, then topologically it is the image of a minimal genus Seifert surface for $L$.  Hence $\chi(L) = \chi(\Sigma)$.  Also note that $\lk(L,L'_+)=\lk(L,L') = \tb(L)$ since $L'$ is a Legendrian push-off of $L$ and $L'_+$ is topologically isotopic to $L'$ in the complement of $L$.  Following Lemma~2 of \cite{sinemgeiges}, let $M=(\tb(L)+1)$ be the linking matrix of $L$ and $M_0 = \begin{pmatrix} 0 & \tb(L) \\ \tb(L) & \tb(L)+1 \end{pmatrix}$ be the extended matrix.  Then we may compute:
\begin{align*} 
\tb_\Q(L^*_+) &= \tb(L'_+) + \frac{\det M_0}{\det M} = \tb(L)-1 + \frac{-\tb^2(L)}{\tb(L)+1} = \frac{-1}{\tb(L)+1} \quad \mbox{ and }\\
 \rot_\Q(L^*_+) &= \rot(L'_+) - \langle \rot(L), M^{-1} (\tb(L)) \rangle = \rot(L) + 1 - \langle \rot(L), \frac{1}{\tb(L)+1} \cdot \tb(L) \rangle\\
&=\rot(L) + 1 - \frac{\rot(L) \tb(L)}{\tb(L)+1} = \frac{\rot(L) +(\tb(L)+1)}{\tb(L)+1} 
\end{align*}
where $\tb(L'_+) = \tb(L)-1$ and $\rot(L'_+) = \rot(L)+1$ since $L'_+$ is a positive stabilization of $L$.

  Assume that $L^*_+$, a positive stabilization of $L^*$, is non-loose. Then by Theorem~\ref{thm:rationalbennquinineq} it holds that  
\begin{align*}
-| \tb_\Q(L^*_+) | + |\rot_\Q(L^*_+)| &\leq -\frac{1}{r} \chi(\Sigma) \\
-\left|\frac{-1}{\tb(L)+1}\right| + \left|\frac{\rot(L)+\tb(L)+1}{\tb(L)+1}\right| & \leq -\frac{1}{|\tb(L)+1|} \chi(L) \\
\frac{1}{\tb(L)+1} +\frac{\rot(L)+\tb(L)+1}{\tb(L)+1}  & \leq \frac{\chi(L)}{\tb(L)+1} \\
\rot(L)+\tb(L)+2 & \geq \chi(L)
\end{align*}
since $\rot(L) < 0$ and $\tb(L)+1<0$.
 But this contradicts our assumption that $-(\rot(L)+\tb(L)+2) > -\chi(L)$.  Hence $L^*_+$ is loose and thus $t(L^*) = 1$.
\end{proof}

 In the introduction, we sketched a proof of Theorem~\ref{thm:tensionlessthandepth} which asserts that there exists non-loose Legendrian knots $L$ with $1=t(L) = d(L)$.  We now provide a more detailed proof.  Observe that the knots constructed are only rationally null-homologous.

\begin{proof}[Proof of Theorem~\ref{thm:tensionlessthandepth}]
Assume $L$ is a non-destabilizable Legendrian knot in $(S^3,\xi_\std)$ satisfying   $\tb(L) \leq -2$, $\rot(L)<0$, $-\chi (L) < -(\rot(L)+\tb(L)+2)$, and $(+1)$--surgery on $L$ is overtwisted.  Let $L^*$ be the Legendrian knot dual to the $(+1)$--surgery on $L$.  Since $L$ and $L^*$ have contactomorphic complements, $L^*$ is a non-loose knot.   Because $L$ is non-destabilizable, Theorem~\ref{thm:bigdepth} implies that $d(L^*) \geq 2$.   Due to the assumptions on $\tb(L)$, $\rot(L)$, and $\chi(L)$, Theorem~\ref{thm:tensionone} implies that $t(L)=1$.   Now we simply need to confirm that such Legendrian knots exist in $(S^3,\xi_\std)$.

Let $p$ and $q$ be coprime integers satisfying $-p>q>0$.  By Theorem 4.1 of \cite{etnyrehonda-torus} the maximal Thurston-Bennequin number of Legendrian representatives of the negative torus knot $T_{p,q}$ is $pq$.   By Theorem 4.4 of \cite{etnyrehonda-torus}, among these Legendrian representatives with maximal Thurston-Bennequin number is one with rotation number $q+p$.  Take $L$ to be this torus knot; hence $\tb(L) = pq<0$, $\rot(L) = q+p<0$, and $\chi(L) = |q|-|p||q-1| = q-p+pq$.  Since $0>2(p+1)$, it follows that $-\chi (L) < -(\rot(L)+\tb(L)+2)$.   Finally, $(+1)$--surgery on $L$ is overtwisted by Corollary~1.2 of \cite{liscastipsicz}.
\end{proof}

For non-loose Legendrian unknots, the depth and tension are always equal to $1$.

\begin{lemma}\label{lem:nonlooseunknotdt}
If $L$ is a non-loose Legendrian unknot in an overtwisted manifold, then $t(L)=d(L) =1$.
\end{lemma}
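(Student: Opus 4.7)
The plan is to reduce to the case of a non-loose Legendrian unknot in $(S^3,\xi_{-1})$ and then exhibit such a knot as the surgery dual of a stabilized Legendrian knot, so that Theorem~\ref{thm:bigdepth} forces depth $1$.

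First I would invoke Corollary~\ref{cor:nonlooseunknot}: any non-loose Legendrian unknot $L\subset (M,\xi)$ is contained in a summand $(S^3,\xi_{-1})$ of a connect sum decomposition $(M,\xi)=(M,\xi')\#(S^3,\xi_{-1})$, under which $L$ corresponds to a non-loose Legendrian unknot $L'\subset (S^3,\xi_{-1})$.  Any standard overtwisted disk for $L'$ in $(S^3,\xi_{-1})$ can be realized inside the $(S^3,\xi_{-1})$--summand of $(M,\xi)$, so $d(L)\le d(L')$.  Consequently it suffices to prove $d(L')=1$ for every non-loose Legendrian unknot $L'$ in $(S^3,\xi_{-1})$.

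Next I would appeal to Plamenevskaya's Figure~3 in \cite{plamenevskaya}, which presents each non-loose unknot from Theorem~\ref{thm:nonlooseunknotS3} as the Legendrian surgery dual $K^{*}$ obtained from contact $(+1)$--surgery on a Legendrian knot $K\subset(S^3,\xi_\std)$ whose front diagram plainly contains a zig-zag; that is, $K$ is a stabilization of some Legendrian knot $K_0\subset(S^3,\xi_\std)$.  Because $(S^3,\xi_\std)$ is tight, the complement of $K$ is tight.  Theorem~\ref{thm:bigdepth} then applies and gives $d(K^{*})=1$, i.e.\ $d(L')=1$.

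Combining this with the reduction above yields $d(L)\le 1$, and non-looseness forces $d(L)\ge 1$, hence $d(L)=1$.  Finally, Lemma~\ref{lem:stabboundsdepth} gives $t(L)\le d(L)=1$, while non-looseness again forces $t(L)\ge 1$, so $t(L)=d(L)=1$.  The only step requiring real care is the middle one: checking by inspection of Plamenevskaya's surgery pictures that each classical invariant pair $(\tb,\rot)=(n,\pm(n-1))$ from Theorem~\ref{thm:nonlooseunknotS3} is actually realized as the surgery dual of a visibly destabilizable Legendrian knot in $(S^3,\xi_\std)$, so that Theorem~\ref{thm:bigdepth} can be invoked.
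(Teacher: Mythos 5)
Your proposal is correct and follows essentially the same route as the paper: reduce to $(S^3,\xi_{-1})$ via Corollary~\ref{cor:nonlooseunknot}, identify each non-loose unknot in Plamenevskaya's surgery descriptions with the surgery dual of a $(+1)$--surgery on a stabilized Legendrian knot so that Theorem~\ref{thm:bigdepth} gives depth $1$, and then apply Lemma~\ref{lem:stabboundsdepth} for the tension. The only cosmetic difference is that the paper notes the non-loose unknot appears in those diagrams as a Legendrian push-off of the surgered component (hence Legendrian isotopic to the surgery dual), which is exactly the identification your middle step asks to verify.
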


\begin{proof}
 Theorem~\ref{thm:bigdepth}  applies to the surgery descriptions of the non-loose Legendrian unknots in $(S^3,\xi_{-1})$ given in  \cite{plamenevskaya} (see also \cite{sinemgeiges}) to show that these knots all have depth $1$.  (More explicitly, each non-loose Legendrian unknot is presented in a surgery description of $(S^3,\xi_{-1})$ as a Legendrian push-off of a stabilized Legendrian unknot on which $(+1)$--surgery is done; as such, the non-loose Legendrian unknot is Legendrian isotopic to the surgery dual of that component.) 
 It then follows from Corollary~\ref{cor:nonlooseunknot} that the surgery diagrams of \cite{plamenevskaya} may be used with  Theorem~\ref{thm:bigdepth}  to show that a non-loose Legendrian unknot in any overtwisted manifold also has depth $1$.  Lemma~\ref{lem:stabboundsdepth} then implies any non-loose Legendrian unknot also has tension $1$.
\end{proof}

\begin{figure}[h!]
\footnotesize
\centering
\psfrag{K}[][]{$K$}
\psfrag{L1}[][]{$L_1$}
\psfrag{L2}[][]{$L_2$}
\psfrag{L}[][]{$L$}
\psfrag{P}[][]{$P$}
\includegraphics[width=6in]{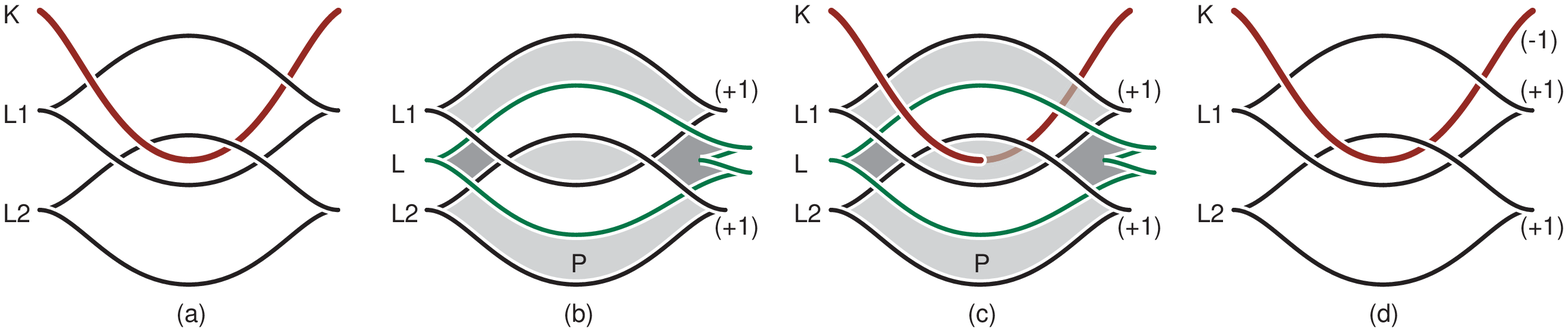}
\caption{}
\label{fig:tb1hopf}
\end{figure}

\begin{prop}\label{prop:ots3knots}
Let $K$ be a Legendrian knot in $(M, \xi)$ with standard Legendrian meridian $L_1$ and its Legendrian push-off $L_2$, locally pictured as in the front diagram of Figure~\ref{fig:tb1hopf}.  
The result of $(+1)$--surgeries on both $L_1$ and $L_2$ is the overtwisted manifold $(M',\xi') = (M,\xi) \# (S^3, \xi_{-1})$.
Letting $K'$ be the image of $K$ in $(M',\xi')$,  then $d(K') \leq 1$.   
Furthermore, if either $(M,\xi)$ or $(+1)$--surgery on $K$ is tight then $d(K')=1$.
\end{prop}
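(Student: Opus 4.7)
The plan is to show that the $(+1, +1)$-contact surgery on the Hopf-linked Legendrian meridians realizes $(M',\xi') \cong (M,\xi) \# (S^3, \xi_{-1})$ and identifies $K'$ as the Legendrian knot connect sum $K \# U$ of $K$ with a non-loose Legendrian unknot $U$ in the $(S^3, \xi_{-1})$ summand. The depth bound $d(K') \leq 1$ will then follow from transporting the overtwisted disk meeting $U$ once, and the furthermore clause from tightness of the complement of $K'$.

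For the identification, I would work in a Darboux ball $B \subset (M, \xi)$ containing $L_1 \cup L_2$ and an arc of $K$. Closing $B$ into an auxiliary $(S^3, \xi_{\std})$, the arc of $K$ extends to a Legendrian unknot $\widehat K$, and by Plamenevskaya's surgery description \cite{plamenevskaya} of non-loose unknots (after contact Kirby moves combining $L_1$ and $L_2$ into a single stabilized Legendrian meridian with $(+1)$-contact surgery, cf.~\cite{sinemgeiges}), the surgery produces $(S^3, \xi_{-1})$ with $\widehat K$ becoming a non-loose Legendrian unknot $U$. Since the surgery is supported in $B$, performing it in $(M, \xi)$ replaces $B$ by $(S^3, \xi_{-1})$ minus a Darboux ball, exhibiting $(M', \xi') = (M, \xi) \# (S^3, \xi_{-1})$ and identifying $K'$ with $K \# U$.

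The depth bound follows from Lemma~\ref{lem:nonlooseunknotdt}: since $U$ is non-loose in $(S^3, \xi_{-1})$, it has a standard overtwisted disk $D$ with $|U \cap D| = 1$. Isotoped to avoid the removed Darboux ball, $D$ embeds into $(M', \xi')$ as an overtwisted disk meeting $K'$ only in the point $D \cap U$, so $d(K') \leq 1$.

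For the furthermore clause it suffices to show $K'$ is non-loose. Under either hypothesis $M \setminus \nbhd(K)$ is tight: either immediately, if $(M, \xi)$ is tight, or else by the observation in Section~\ref{sec:otdisk} that tight contact surgery implies tight complement. Non-looseness of $U$ in $(S^3, \xi_{-1})$ yields tightness of $S^3 \setminus \nbhd(U)$. Gluing these tight pieces along the annulus formed by puncturing the knot-connect-sum sphere with $K'$ yields the tight complement $(M', \xi') \setminus \nbhd(K')$ by standard tight prime decomposition arguments (\cite{colin}). So $K'$ is non-loose, giving $d(K') \geq 1$, hence $d(K') = 1$. The main obstacle I anticipate is the first step: establishing the local Kirby identification so that the two $(+1)$-surgeries truly yield an $(S^3, \xi_{-1})$-summand through which $K$ threads along an arc.
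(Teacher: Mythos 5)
Your overall strategy --- realizing $K'$ as a Legendrian connected sum $K \# U$ with a non-loose unknot $U$ in the $(S^3,\xi_{-1})$ summand --- is genuinely different from the paper's, but it has two gaps, and the second one is serious. First, the claim that the closed-up arc $\widehat{K}$ becomes a \emph{non-loose} unknot $U$ after the two $(+1)$--surgeries is exactly where the content lies, and you have deferred it to unperformed contact Kirby moves. Note that in the simplest instance ($M=S^3$, $\xi=\xi_\std$, $K$ a $\tb=-1$ unknot) the assertion that ``$U$ is non-loose'' \emph{is} a special case of the proposition you are proving, so it cannot be disposed of by a citation to \cite{plamenevskaya}; it needs an argument. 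The paper avoids this entirely: for the depth bound it exhibits the overtwisted disk by hand, as the image after surgery of a thrice-punctured sphere $P$ cobounded by $L_1\cup L_2$ and a $\tb=-2$ unknot $L$ (Figure~\ref{fig:tb1hopf}(b),(c)); since $P$ meets $L_1,L_2$ with topological framing $0$ and $L$ with its contact framing, it caps off after the surgeries to a disk $D$ with $\tb(\partial D)=0$, and $K$ can be arranged to meet $P$ once, giving $d(K')\le 1$ with no appeal to the unknot classification.

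Second, your ``furthermore'' step fails as written. The sphere $\partial B$ of the connected sum meets $K'$ in two points, so $M'\setminus\nbhd(K')$ is obtained by gluing the two knot complements along a meridional \emph{annulus}, not along a sphere. Colin's prime decomposition theorem \cite{colin} concerns connected sums along spheres and says nothing about annulus (or torus) gluings; gluing two tight contact manifolds along an annulus can perfectly well produce an overtwisted manifold, so ``standard tight prime decomposition arguments'' do not apply here. The paper closes this gap by surgery duality instead: by Proposition~2 of \cite{dinggeiges}, after $(-1)$--surgery on $K$ the standard meridian $L_1$ becomes a Legendrian push-off of $K$, so its $(+1)$--surgery cancels the $(-1)$--surgery, and one finds that $(-1)$--surgery on $K'$ in $(M',\xi')$ coincides with $(+1)$--surgery on $K$ in $(M,\xi)$; hence the complement of $K'$ is contactomorphic to the complement of $K$ in $(M,\xi)$, and either hypothesis of the ``furthermore'' clause gives tightness directly. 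To salvage your connected-sum picture you would need an actual tightness gluing theorem along the meridional annulus (or an argument pushing an overtwisted disk off a convex copy of it), neither of which is supplied by \cite{colin} or by the paper.
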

\begin{proof}
Let us first focus locally on the link $L_1 \cup L_2$ of Legendrian unknots of $\tb=-1$ with their $(+1)$--surgeries.  Viewed in $(S^3,\xi_\std)$, this pair of surgeries yields $(S^3, \xi_{-1})$ giving the first claim.  We may construct an overtwisted disk in $(M', \xi')$ as follows:  Figure~\ref{fig:tb1hopf}(b) adds a third Legendrian unknot $L$ of $\tb=-2$ which cobounds a thrice-punctured sphere $P$ with $L_1 \cup L_2$.  Observe that $P$ meets $L_1$ and $L_2$ with topological framing $0$ and $L$ with the contact framing.  Hence, after the pair of $(+1)$--surgeries, $P$ caps off to a disk $D$ where the boundary of $D$ is (the image of) $L$. Since $P$ meets $L$ with the contact framing, $D$ is an overtwisted disk.

Now including $K$ in our consideration, we see that we may choose $L$ and $P$ so that $K$ intersects $P$ once as in Figure~\ref{fig:tb1hopf}(c).  Thus $K'$, the image of $K$ in $(M',\xi')$, intersects the overtwisted disk $D$ once.  Hence $d(K')\leq1$.

If $(-1)$--surgery on $K'$ produces a tight manifold, then $K'$ will be non-loose so that $d(K')=1$.  Locally, a surgery diagram for this manifold is given in Figure~\ref{fig:tb1hopf}(d).  By Proposition~2 of \cite{dinggeiges}, since $L_1$ is a standard Legendrian meridians of $K$, it is Legendrian isotopic to a Legendrian push-off of $K$ after the $(-1)$--surgery on $K$. Since $L_2$ is a Legendrian push-off of $L_1$, it has a Legendrian isotopy following that of $L_1$ as a push-off.  Thus, after $(-1)$--surgery on $K$, $L_1 \cup L_2$ is Legendrian isotopic to a Legendrian push-off of $K$ with a further push-off.  Thus $(+1)$--surgery on $L_1$, say, cancels the $(-1)$--surgery on $K$ returning the manifold $(M, \xi)$ in which $L_2$ is now both the only remaining surgery curve and Legendrian isotopic to the original knot $K$.  Hence the result of $(-1)$--surgery on $K'$ in $(M',\xi')$ may also be obtained by $(+1)$--surgery on $K$ in $(M,\xi)$.

\end{proof}

\begin{remark}
Observe that the surgery duals to $(-1)$--surgery on $K'$ and $(+1)$--surgery on $K$ are Legendrian isotopic knots.
Hence from the conclusion of Proposition~\ref{prop:ots3knots}, Theorem~\ref{thm:bigdepth} implies $d(K')=1$ if and only if the surgery dual to $(+1)$--surgery on $K$ is a stabilization.  
\end{remark}

\begin{lemma}[Proof of Theorem~1.1 \cite{liscastipsicztightI}]\label{lem:possurg}
If $K$ is a Legendrian knot in $(S^3, \xi_\std)$ with $\tb(K) = 2g_s(K)-1 >1$, then $(+1)$--surgery on $K$ is tight. \qed
\end{lemma}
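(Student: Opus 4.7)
The plan is to establish tightness of $(+1)$--contact surgery on $K$ by showing the resulting contact structure has nonzero Ozsv\'ath--Szab\'o contact invariant, which precludes overtwistedness.

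First, I would invoke the contact invariant $c(\xi) \in \widehat{HF}(-M)/(\pm 1)$, recalling the Ozsv\'ath--Szab\'o theorem that $c(\xi) = 0$ whenever $\xi$ is overtwisted. Thus it suffices to prove $c(\xi_K) \neq 0$ for $\xi_K$ the contact structure on $S^3_{\tb(K)+1}(K)$ arising from $(+1)$--contact surgery on $K$. Second, I would apply the Lisca--Stipsicz naturality formula for the contact invariant under $(+1)$--contact surgery: if $W$ denotes the $2$--handle cobordism (reversed and equipped with the canonical $\text{Spin}^c$ structure $\mathfrak{s}$), then $c(\xi_K) = \pm F_{W,\mathfrak{s}}(c(\xi_\std))$, where $c(\xi_\std)$ generates $\widehat{HF}(-S^3) \cong \Z$. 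So the task reduces to proving $F_{W,\mathfrak{s}}$ is nonzero on this generator.

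The crux is to exploit the slice-genus hypothesis geometrically. Take a smoothly embedded orientable surface $\Sigma \subset B^4$ of genus $g_s(K)$ with $\partial \Sigma = K$, and cap it off with the core of the $2$--handle of $W$ to obtain a closed surface $\widehat{\Sigma}$ of genus $g_s(K)$ inside the closed $4$--manifold $X = (-B^4) \cup W$. The framing condition gives $[\widehat{\Sigma}] \cdot [\widehat{\Sigma}] = \tb(K)+1 = 2g_s(K)$. A combination of the adjunction inequality for Heegaard Floer cobordism maps with a grading computation for $\mathfrak{s}$ shows that the $\text{Spin}^c$ structure arising in the Lisca--Stipsicz surgery formula realizes the pairing $\langle c_1(\mathfrak{s}), [\widehat{\Sigma}] \rangle$ precisely at the adjunction threshold $2g_s(K) - [\widehat{\Sigma}]\cdot[\widehat{\Sigma}]$; this is where the equality $\tb(K) = 2g_s(K)-1$ is essential, as a strict inequality would push the pairing into a range forcing vanishing. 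Under this borderline configuration the cobordism map does not kill the contact class, yielding $c(\xi_K)\neq 0$.

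The main obstacle is the $\text{Spin}^c$ bookkeeping of this final step: one must verify that the specific $\text{Spin}^c$ structure determined by the contact surgery pairs with $\widehat{\Sigma}$ in the unique way permitted by adjunction, and then separately rule out cancellation across the (potentially) multiple $\text{Spin}^c$ structures contributing to the surgery map. The auxiliary hypothesis $\tb(K) > 1$ (equivalently $g_s(K) \geq 2$) ensures that $\widehat{HF}(-S^3_{2g_s(K)}(K))$ is rich enough to support a nonzero contact class and excludes the degenerate low-genus cases where the surgered manifold could be $S^3$ or have $b_1>0$, which would require separate ad hoc arguments.
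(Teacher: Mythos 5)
The paper itself offers no argument for this lemma beyond the bracketed citation: the \qed{} in the statement indicates it is being quoted from the proof of Theorem~1.1 of Lisca--Stipsicz \cite{liscastipsicztightI}. Your overall strategy---show the Ozsv\'ath--Szab\'o contact invariant of the surgered structure is non-zero, using its naturality under contact $(+1)$--surgery and the slice-genus hypothesis---is exactly the strategy of that cited proof, so the framing is right. The difficulty is that the one step carrying all the content, ``under this borderline configuration the cobordism map does not kill the contact class,'' is asserted rather than proved, and the tool you invoke cannot deliver it. Adjunction inequalities for cobordism maps are \emph{vanishing} results: they tell you $F_{W,\mathfrak{s}}=0$ when $\langle c_1(\mathfrak{s}),[\widehat{\Sigma}]\rangle$ is too large relative to $g(\widehat{\Sigma})$ and $[\widehat{\Sigma}]^2$. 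Observing that the relevant $\mathrm{Spin}^c$ structure sits exactly at the threshold only says that adjunction fails to force vanishing; it gives no information in the other direction, and cobordism maps can certainly vanish for reasons invisible to adjunction. As written, $c(\xi_K)\neq 0$ is concluded by fiat.

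What is missing is a positive non-vanishing input for the map $F_{-W}$ on the generator of $\widehat{HF}(-S^3)$. In the cited proof this is supplied by an explicit injectivity statement for that map, obtained from the surgery exact triangle together with an adjunction-forced vanishing of the companion map in the triangle (this is precisely where the equality $\tb(K)+1=2g_s(K)$ enters, via a surface of genus $g_s(K)$ and square $2g_s(K)$), with the adjunction relation for cobordism maps controlling which $\mathrm{Spin}^c$ structures can contribute and ruling out cancellation; equivalently one can appeal to Ozsv\'ath--Szab\'o's identification of the large-surgery cobordism map with a projection on the knot Floer complex. Your sketch names the $\mathrm{Spin}^c$ bookkeeping as ``the main obstacle'' but supplies no mechanism for it, so the proof does not close. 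Two smaller points: the naturality formula for $(+1)$--contact surgery is stated for the cobordism map summed over all $\mathrm{Spin}^c$ structures on $W$, not for a single canonical one; and the hypothesis $\tb(K)>1$ is not what rules out $b_1>0$ for the surgered manifold---the surgery coefficient $2g_s(K)$ is already positive once $g_s(K)\geq 1$ (indeed the Lisca--Stipsicz theorem only requires $\tb(K)=2g_s(K)-1>0$).
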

Here $g_s(K)$ denotes the smooth $4$--ball genus of a knot $K$ in $S^3$.

\begin{example}
Let $p,q$ be positive coprime integers.
Let $K$ be a Legendrian $(p,q)$--torus knot in $(S^3, \xi_\std)$ with $\tb(K) =pq-p-q =2g_s(K)-1$. (Such Legendrian knots do exist, e.g.\ \cite{etnyrehonda-torus}.)  Thus by Proposition~\ref{prop:ots3knots} and Lemma~\ref{lem:possurg}, $(+1)$--surgeries on a standard Legendrian meridian and its push-off send $K$ to a Legendrian $(p,q)$--torus knot in $(S^3,\xi_{-1})$ with depth $1$.
\end{example}

\begin{example}
An upper bound on tension may be obtained from a violation of Theorem ~\ref{thm:bennequinineq} after some number of stabilizations.  Here we show that there are non-loose Legendrian knots $K$ with $t(K)=1$ for which Theorem~\ref{thm:bennequinineq} only offers a large upper bound.

Continuing with the previous example,
 Figure ~\ref{fig:nonloosetorusknots} gives an explicit surgery diagram for non-loose Legendrian $(2,q)$--torus knots $L_{2,q}$ in $(S^3, \xi_{-1})$ that have depth and tension equal to $1$.  A straightforward computation with the formula from Lemma 6.6 in \cite{LOSS} shows that the invariants of these non-loose torus knots are $\tb(L_{2,q}) = q$ and $\rot(L_{2,q}) =0$.
So after $q$ stabilizations this knot will violate the inequality in Theorem~\ref{thm:bennequinineq} thereby only implying that $t(L_{2,q}) \leq q$.  
\end{example}

\begin{figure}[h!]
\centering
\includegraphics[height=2in]{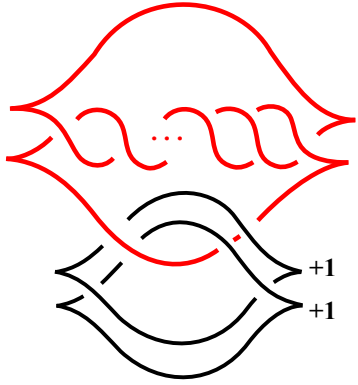}
\caption{Non-loose $(2,q)$-torus knots in $(S^3, \xi_{-1})$.}
\label{fig:nonloosetorusknots}
\end{figure}

Theorem~\ref{thm:bigdepth} may be generalized to give characterizations of Legendrian knots of larger depth.  Topologically, if a knot $K^*$ transversally intersects the interior of a disk $D$ a total of $n$ times, then each of these intersections may be tubed to a meridian $m^*$ of $K^*$ creating a ``folded surface''  $\Sigma$ where the ``boundary'' of $\Sigma$ is $\bdry D$ together with $n$ copies of $m^*$.  (The terminology ``folded'' comes from Turaev \cite{turaev}.  When the algebraic intersection number of $K^*$ with $D$ is $\pm n$, then this folded surface is a rational Seifert surface for the link $\bdry D \cup m^*$.)  Being disjoint from $K^*$, this folded surface $\Sigma$ persists through surgery on $K^*$.  Moreover, after integral surgery on $K^*$, the surgery dual knot $K$ is isotopic to $m^*$ and hence $\Sigma$ is a folded surface for the link $\bdry D \cup K$ with multiplicity $1$ on $\bdry D$ and multiplicity $n$ on $K$.

In Theorem~\ref{thm:depth2} below, we examine the above construction in the contact setting for $n=2$ where the folded surface $\Sigma$ is an honest surface, a once-punctured torus or Klein bottle.

\begin{thm}\label{thm:depth2}
Suppose $(+1)$--surgery on a Legendrian knot $L$ in $(M,\xi)$ with tight complement yields an overtwisted  manifold with surgery dual knot $L^*$.   Then $d(L^*) = 2$ if and only if (i) $L$ is not a stabilization and (ii) there is a once-punctured torus or once-punctured Klein bottle $\Sigma$ with $\bdry \Sigma$ Legendrian and $\tw_\xi(\bdry \Sigma, \Sigma)=0$ that contains $L$ as an essential, non-separating, orientation preserving curve in $\Sigma$ with $\tw_\xi(L, \Sigma) = +1$. 
\end{thm}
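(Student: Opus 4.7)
The plan is to adapt the strategy behind the proof of Theorem~\ref{thm:bigdepth} to intersections of multiplicity two. In the forward direction, I would cut $\Sigma$ along $L$ and cap the resulting boundary copies of $L$ by meridional disks of the $(+1)$--surgery solid torus to produce an overtwisted disk meeting $L^*$ exactly twice. In the converse direction, starting from an overtwisted disk $D$ realizing $d(L^*)=2$, I would reconstruct $\Sigma$ inside $(M,\xi)$ by removing a neighborhood of $L^*$ from $D$ and gluing in an annulus that lives inside the dual surgery solid torus and contains $L$ as a core curve.

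For the forward direction, assuming (i) and (ii), since $L$ is non-separating and orientation preserving in $\Sigma$, cutting $\Sigma$ along $L$ yields a connected pair of pants $P$ with boundary $\bdry\Sigma\cup L_1\cup L_2$, where $L_1,L_2$ are two parallel copies of $L$ on $\bdry\nbhd(L)$ whose common slope is the framing $\Sigma$ induces on $L$. The hypothesis $\tw_\xi(L,\Sigma)=+1$ identifies this slope with the meridional slope of the contact $(+1)$--surgery solid torus used to form $(M_+,\xi_+)$, so $L_1$ and $L_2$ each bound disjoint meridional disks in the dual solid torus. Capping $P$ off with these two disks yields an embedded disk $D$ with $\bdry D = \bdry\Sigma$. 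Because $\tw_{\xi_+}(\bdry D,D)=\tw_\xi(\bdry\Sigma,\Sigma)=0$, $D$ is an overtwisted disk, and by construction $L^*$ (the core of the surgery solid torus) meets each capping disk exactly once, so $|L^*\cap D|=2$ and $d(L^*)\leq 2$. Tightness of the complement of $L$ makes $L^*$ non-loose, while (i) together with Theorem~\ref{thm:bigdepth} rules out $d(L^*)=1$, forcing $d(L^*)=2$.

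For the converse, assume $d(L^*)=2$. The contrapositive of Theorem~\ref{thm:bigdepth} applied to $L$ (whose complement is tight by hypothesis) gives (i): $L$ cannot be a stabilization, else $d(L^*)$ would equal $1$. By Proposition~\ref{prop:sot} I may choose a standard overtwisted disk $D$ transverse to $L^*$ with $|L^*\cap D|=2$, and set $P=D\setminus\nbhd(L^*)$, a pair of pants with boundary $\bdry D\cup m_1^*\cup m_2^*$ where each $m_i^*$ is a meridian of $L^*$. Passing back to $(M,\xi)$ by $(-1)$--surgery on $L^*$, the identification of homology classes on the shared torus $\bdry\nbhd(L)$ places each $m_i^*$ as a curve of slope $+1$ relative to the contact framing of $L$; since such curves are isotopic to $L$ inside $\nbhd(L)$, I can build an embedded annulus $Q\subset\nbhd(L)$ with $\bdry Q=m_1^*\cup m_2^*$ whose core curve is $L$. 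The surface $\Sigma=P\cup Q$ then has $\chi=-1$ and a single boundary component $\bdry D$, so $\Sigma$ is either a once-punctured torus or once-punctured Klein bottle, the distinction being whether the orientations of $P$ and $Q$ match along both gluing circles. Since $L$ is a core curve of the annulus $Q$, it is orientation preserving in $\Sigma$, and cutting $\Sigma$ along $L$ recovers the connected pair of pants $P$ enlarged by two annuli, confirming that $L$ is essential and non-separating. The framing computations $\tw_\xi(L,\Sigma)=\tw_\xi(L,Q)=+1$ and $\tw_\xi(\bdry\Sigma,\Sigma)=\tw_{\xi_+}(\bdry D,D)=0$ then yield (ii).

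The hard part will be the converse construction: ensuring that the two meridians $m_1^*, m_2^*$ can be joined by an embedded annulus $Q$ inside $\nbhd(L)$ whose core is precisely $L$, and uniformly accommodating both the once-punctured torus and once-punctured Klein bottle cases as the gluings along $m_1^*$ and $m_2^*$ preserve or reverse orientation. A subsidiary point requiring attention is tracking the slope identifications through the $(\pm 1)$--surgery exchange so that $m_i^*$ indeed acquires contact slope $+1$ with respect to $L$ in $(M,\xi)$.
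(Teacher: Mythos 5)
Your proposal is correct and follows essentially the same route as the paper: cap the pair of pants $\Sigma\setminus\nbhd(L)$ with two meridian disks of the surgery torus in the forward direction, and invoke Theorem~\ref{thm:bigdepth} to rule out depth $1$; in the converse, build the once-punctured torus or Klein bottle from a depth-realizing overtwisted disk. The only cosmetic difference is that the paper constructs the converse surface in $(M_+,\xi_+)$ by tubing $D$ along an arc of $L^*$ through a $\tb=-1$ meridian $m^*$ and then Legendrian isotopes $L$ to $m^*$, whereas you assemble the same surface on the $(M,\xi)$ side as $P\cup Q$ with the annulus $Q$ threaded through $L$ itself; the slope bookkeeping you flag as the remaining work is exactly the observation that the meridian of $L^*$ is the contact $(+1)$--framed curve of $L$, which makes $m_1^*,m_2^*$ longitudinal in $\nbhd(L)$ and the annulus $Q$ with $\tw_\xi(L,Q)=+1$ routine to produce.
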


Note that while every isotopy class of essential, simple closed curves in a once-punctured torus
is non-separating and orientation preserving, there is a unique such class in a once-punctured Klein bottle.

\begin{proof}
This proceeds in the same manner as the proof Theorem~\ref{thm:bigdepth}. 

Assume $L$ is a curve in a surface $\Sigma$ as in part (ii) of the hypotheses so that $\Sigma - L$ is a thrice-punctured sphere. Then $(+1)$--contact surgery on $L$ topologically compresses $\Sigma$ to a disk $D$ that $L^*$ intersects twice --- that is, $(+1)$--contact surgery fills the exterior of $L$ to cap-off  two boundary components of $\Sigma - \nbhd(L)$.  Since $\bdry D  = \bdry \Sigma$, $D$ is an overtwisted disk.   Hence $d(L^*) \leq 2$.  Because $L$ is not a stabilization by (i) but has tight complement, $d(L^*) \geq 2$ by Theorem~\ref{thm:bigdepth}.  Thus $d(L^*) = 2$.

Now $L*$ is a Legendrian knot in the overtwisted manifold $(M_+,\xi_+)$ and assume $d(L^*) = 2$.  Let $D$ be a standard overtwisted disk that $L^*$ intersects twice transversally. Take a $\tb=-1$ Legendrian meridian $m^*$ of $L^*$ so that $L'$ is isotopic to the Legendrian knot dual to $(-1)$--surgery on $L^*$.
Then tube $D$ along an arc of $L^*$ through $m^*$ to form $\Sigma$, a once-punctured torus or once-punctured Klein bottle (depending whether or not $L^*$ has trivial algebraic intersection number with $D$).  Observe that $\tw_{\xi_+}(m^*,\Sigma) = +1$ by construction.  If $L$ is the surgery dual knot and $(M,\xi)$ is the resulting contact manifold, then since $\xi$ and $\xi_+$ agree on the complements of neighborhoods of $L$ and $L^*$, we may Legendrian isotop $L$ into the position of $m^*$ so that $\tw_\xi(L,\Sigma) = +1$.  By construction $\tw_\xi(\bdry \Sigma, \Sigma)=0$ and $L$ is not a stabilization by Theorem~\ref{thm:bigdepth}.
\end{proof}

\begin{remark}\label{rem:depth2constructions}
As of this writing, we have yet to use Theorem~\ref{thm:depth2} to construct any explicit examples of Legendrian knots $L^*$ of depth $2$ arising as surgery duals to knots in $(S^3,\xi_\std)$.  Assumably such knots exist.   The knots we were able to construct satisfying (ii) of Theorem~\ref{thm:depth2} ended up being stabilizations.  

One approach is to begin with a genus $1$ Legendrian knot $K$ in a tight $(M,\xi)$ with $\tb(K)=0$.  Then $K$ has a once-punctured torus Seifert surface $\Sigma$ which may be made convex with dividing set $\Gamma$ disjoint from $K = \bdry \Sigma$.   Assuming $\Gamma$ has just two (parallel) components, let $L$ be an essential curve in $\Sigma$ that intersects each component of $\Gamma$ once.  We may assume $L$ is Legendrian by the Legendrian Realization Principle \cite{kanda,honda1} and thus conclude that $\tw_\xi(L, \Sigma) = +1$.  The only thing left to check is whether or not $L$ is a stabilized knot.

Another approach is to begin with a non-stabilized Legendrian knot $L$ in a tight $(M,\xi)$ (or just with tight complement) for which $(+1)$--contact surgery is overtwisted and attempt to construct the surface $\Sigma$ of Theorem~\ref{thm:depth2}.    Take a pair of Legendrian push-offs and insert an extra twist to form an annulus $A$ that contains $L$ with $\bdry A$ Legendrian and each of these three curves having $\tw_\xi(\bullet, A) = +1$.  Then the trick is to find a Legendrian banding of the components of $\bdry A$ that results in a surface $\Sigma$ with $\tw_\xi(\bdry \Sigma, \Sigma) = 0$.
\end{remark}

\subsection{Orientation refinements}\label{sec:ordt}

As defined, depth and tension do not depend upon the orientation of a Legendrian knot.   Nevertheless, if a non-loose knot is oriented, one may care to keep track of the oriented intersections with an overtwisted disk and the signs of stabilizations that loosen it.  Recall from section~\ref{sec:otdisk} that an overtwisted disk $D$ is oriented so that $\rot(\bdry D)>0$ and from Remark~\ref{rmk:stabsign} that a positive stabilization removes a negative intersection of an oriented Legendrian knot with a standard overtwisted disk while a negative stabilization removes a positive intersection.
 
\begin{defn}
Let $L$ be an oriented Legendrian knot in an overtwisted contact structure.

Define $d_\ort(L)$, the {\em oriented depth} of $L$, to be set of pairs $(p, n)$ such that $L$ transversally intersects some overtwisted disk $D$ positively $p$ times and negatively $n$ times with $p+n = d(L)$.

Define $t_\ort(L)$, the {\em oriented tension} of $L$, to be the set of pairs $(a, b)$ such that the stabilization $L_{+a,-b}$ is loose and $a+b = t(L)$. 
  
\end{defn}

\begin{defn}
Let $L$ be an oriented Legendrian knot in an overtwisted contact structure.  Define $t_+(L)$,  the {\em positive tension} of $L$,  to be the minimum number of positive stabilizations required to loosen $L$ if possible, and $\infty$ otherwise.  Similarly define $t_-(L)$, the {\em negative tension} of $L$. Observe that $t_\pm(-L)=t_\mp(L)$ where $-L$ is $L$ with its orientation reversed.

\end{defn}

Due to their relationship to the LOSS invariant and relevance for transverse knots, we choose to focus upon positive and negative tensions instead of the oriented depth and tension.

\begin{figure}
\footnotesize
\centering
\psfrag{P}[][]{$L^0$}
\psfrag{L}[][]{$L$}
\psfrag{D}[][]{$L'$}
\psfrag{i}[][]{{\tiny isotopy}}
\psfrag{s}[][]{\tiny neg.\ stab.}
\includegraphics[width=\textwidth]{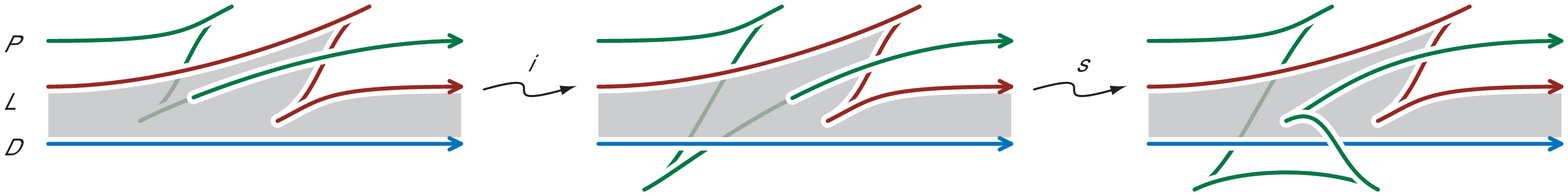}
\caption{}
\label{fig:annulusstab}
\end{figure}

We may partially extend Theorem~\ref{thm:bigdepth} for these signed tensions.
\begin{prop}\label{prop:tensionrefinement} 
Assume the oriented Legendrian knot $L$ in the contact manifold $(M,\xi)$ is a positive stabilization of the Legendrian knot $L'$ and has tight complement. Let $L^*$ be the oriented knot surgery dual to $(+1)$--surgery on $L$.
 
Then $t_-(L^*) = 1$.  
Furthermore, if the Heegaard Floer contact invariant for $(+1)$--surgery on $L'$ in $(M,\xi)$ is non-zero, then $t_+(L^*)>1$. 
\end{prop}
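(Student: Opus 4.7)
For the first claim $t_-(L^*)=1$, I would recycle the geometric construction from Theorem~\ref{thm:bigdepth}. Since $L=L'_+$ is a stabilization with tight complement, the annulus $A$ cobounded by $L$ and its destabilization $L'$ (with $\tw_\xi(L',A)=0$ and $\tw_\xi(L,A)=+1$) caps off through a meridional disk of the surgery solid torus to an overtwisted disk $D\subset(M_+,\xi_+)$, whose boundary is the image of $L'$ and which is met exactly once transversally by the surgery dual $L^*$. Because the complement of $L^*$ agrees with that of $L$, $L^*$ is non-loose, so $t_-(L^*)\geq 1$.

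To pin down the upper bound, I would verify the sign of $L^*\cap D$. Orient $D$ so that $\rot(\bdry D)>0$ (as required in \S\ref{sec:otdisk}), and orient $L^*$ by the push-off convention from the contact surgery subsection. A local-model inspection in a standard neighborhood of the positive stabilization cusp---precisely the picture in Figure~\ref{fig:annulusstab}---shows that the intersection $L^*\cap D$ is \emph{positive}, exactly because $L=L'_+$ is a positive rather than a negative stabilization. By Remark~\ref{rmk:stabsign}, a single negative stabilization of $L^*$ removes this intersection and loosens $L^*$, giving $t_-(L^*)\leq 1$.

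For the second claim, assume the Heegaard Floer contact invariant of the manifold $(M',\xi')$ obtained by $(+1)$--contact surgery on $L'$ in $(M,\xi)$ is non-zero; I aim to show $L^*_+$ is non-loose. The key step is a surgery-exchange identity: $(-1)$--contact surgery on $L^*_+$ in $(M_+,\xi_+)$ is contactomorphic to $(+1)$--contact surgery on $L'$ in $(M,\xi)$. To see this, realize $L^*_+$ as the image under the $(+1)$--surgery on $L$ of a positive stabilization of a Legendrian push-off $L''$ of $L$. Since $L''$ is Legendrian isotopic to $L=L'_+$, the push-off $L''_+$ is Legendrian isotopic to $L'_{+,+}$, which carries one positive stabilization in excess of $L$. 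The Ding--Geiges--Stipsicz cancellation lemma applied to the two-component diagram (the $(+1)$--surgery on $L=L'_+$ paired with the $(-1)$--surgery on this stabilized push-off) then collapses both components, the excess positive stabilization being absorbed so that what remains is a single $(+1)$--contact surgery on $L'$.

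Granting this identity, if $L^*_+$ were loose in $(M_+,\xi_+)$ then an overtwisted disk in its complement would persist through the $(-1)$--contact surgery on $L^*_+$, forcing $(M',\xi')$ to be overtwisted and hence its contact invariant to vanish, contradicting the hypothesis. Therefore $L^*_+$ has tight complement, i.e.\ is non-loose, and $t_+(L^*)\geq 2>1$. I expect the main obstacle to be the precise form of the surgery-exchange identity: the Ding--Geiges--Stipsicz cancellation must be applied with care so that the positive stabilization on the push-off cancels exactly one of the positive stabilizations built into $L=L'_+$, leaving $L'$ rather than a stabilization of $L'$. The sign check in the first claim, while essential, is routine once the correct local picture is in hand.
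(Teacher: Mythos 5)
Your argument for $t_-(L^*)=1$ is essentially the paper's: the same push-off annulus $A$ caps off to an overtwisted disk $D$ with $\bdry D$ the image of $L'$, met once by $L^*$, and a single negative stabilization (performed as in Figure~\ref{fig:legstabtoloosen}/Figure~\ref{fig:annulusstab}) removes that intersection. That part is fine.

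The second part has a genuine gap. Your ``surgery-exchange identity'' --- that $(-1)$--surgery on $L^*_+$ is contactomorphic to $(+1)$--surgery on $L'$ --- is not delivered by the cancellation lemma. Cancellation requires the $(-1)$--surgery to be on an \emph{exact} Legendrian push-off of the $(+1)$--surgered knot; here the second component is a \emph{stabilized} push-off, and there is no move that ``absorbs'' the excess stabilization into destabilizing $L=L'_+$ down to $L'$. What is true is only a slope computation: both surgeries fill the exterior of the underlying knot along $\mu+\lambda'$. But the contact structures need not agree. Concretely, the resulting manifold decomposes as $(M\setminus\nbhd(L'),\xi)$ glued to a positive basic slice union a solid torus with $\Delta(\mbox{meridian},\mbox{dividing slope})=2$; that union is either overtwisted or the unique standard neighborhood. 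So the most you can extract is a dichotomy: $(-1)$--surgery on $L^*_+$ is either overtwisted or contactomorphic to $(+1)$--surgery on $L'$ --- and deciding which is exactly the tightness statement you are trying to prove. Using the identity unconditionally is therefore circular.

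The paper's proof avoids this by working with open books: after two Giroux stabilizations it identifies the monodromy of the open book supporting $(-1)$--surgery on $L^*_+$, via the lantern relation, as $\tau_\gamma\circ\tau_\beta\circ\tau_{L'}^{-1}\circ\phi''$. Thus the manifold is obtained from $(M',\xi')\#(S^1\times S^2,\xi_\std)\#(S^1\times S^2,\xi_\std)$ by \emph{two additional Legendrian surgeries} (positive Dehn twists along the lantern curves $\beta,\gamma$), not by nothing. Non-vanishing of the contact invariant survives connected sums and the addition of positive Dehn twists (Baldwin's comultiplication), which is why the hypothesis is phrased in terms of the Heegaard Floer contact invariant rather than mere tightness: Legendrian surgery preserves non-vanishing of $c(\xi)$ but does not in general preserve tightness. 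To repair your argument you would need to replace the claimed contactomorphism with this (or an equivalent) functoriality statement.
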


Ozsv\'ath-Szab\'o's Heegaard Floer contact invariant is defined in \cite{osctct}; its behavior under connected sums and $(-1)$--contact surgery follows from this. 

\begin{proof}
Let $L$ be a positive stabilization of a Legendrian push-off of a Legendrian knot $L'$, and let $A$ be the annulus between them. 
Let $L^0$ be a further Legendrian push-off of $L$.    This situation is illustrated in Figure~\ref{fig:annulusstab} on the left.   It further indicates how a negative stabilization of $L^0$ can remove the intersection with $A$.

 As in Theorem~\ref{thm:bigdepth} and Lemma~3.1 of \cite{liscastipsicz}, after $(+1)$--surgery on $L$, $L^0$ is Legendrian isotopic to the surgery dual knot $L^*$ and $A$ caps off to an overtwisted disk $D$ with $\bdry D = L'$.  Thus a negative stabilization on $L^*$ can remove the intersection with $D$, and so $t_-(L^*)=1$.

\medskip
 We will now show that $(-1)$--surgery on a positive stabilization $L^*_+$ of $L^*$ has non-zero contact invariant if $(M,\xi)$ does too in order to conclude that $L^*_+$ is non-loose and hence $t_+(L^*)>1$.  To do so, we will first construct an open book supporting the contact manifold obtained by $(-1)$--surgery on $L^*_+$ and relate it to open books supporting $(M,\xi)$ and $(+1)$--surgery on $L'$.

Let $(\Sigma, \phi)$ be an (abstract) open book supporting $(M,\xi)$ in which the page $\Sigma$ contains the Legendrian knot $L'$ as a non-separating curve so that the page framing agrees with the contact framing.  Giroux shows how to do this \cite{girouxopenbook}.  We may assume $\phi$ acts trivially in a collar neighborhood of $\bdry \Sigma$.   Let $a$ be an embedded arc in $\Sigma$ with one endpoint on $\bdry \Sigma$, the other endpoint on $L'$, and interior disjoint from $L'$.  A neighborhood of $a$ is shown in Figure~\ref{fig:abstrOB}(a).  Let $A$ be a regular annular neighborhood of $L' \cup a$.  We will be modifying $(\Sigma,\phi)$ within this neighborhood.

A single stabilization of this open book near $a \cap \bdry \Sigma$ allows the Legendrian knot $L$ (a positive stabilization of $L'$) to embed in this stabilized page with page framing equal to its contact framing.  This is shown in Figure~\ref{fig:abstrOB}(b) along with a parallel copy $L^0$ of $L$.  A further stabilization of the open book then lets $L^0_+$, a positive stabilization of $L^0$, embed in the page with page framing equal to its contact framing as shown in Figure~\ref{fig:abstrOB}(c).  Figure~\ref{fig:lantern}(a) shows this again in the entirety of $A$ with the two stabilizations.
 Let $(\Sigma'', \bdry_2 \circ \bdry_1 \circ \phi'')$ denote this resulting twice stabilized open book for $(M,\xi)$.  Since the stabilizations occur in neighborhood of a point in $\bdry \Sigma$, we may view $\Sigma''$ as a subsurface of $\Sigma$ whose complement is two open disks in a neighborhood of $a$ (as opposed to viewing $\Sigma''$ as the result of attaching two $1$--handles to $\Sigma$) and $\phi''$ is the restriction of $\phi$ to $\Sigma''$.   The maps $\bdry_1$ and $\bdry_2$ indicates positive Dehn twist along a curve parallel to each of the two new boundary components.  

Let $\phi^* =  \tau^{-1}_{L} \circ \bdry_2 \circ \bdry_1$ and $\phi^*_+ = \tau_{L^0_+} \circ \phi^*$.
It now follows that $(\Sigma'', \phi^* \circ \phi'')$ is an open book for $(M_+, \xi_+)$ and $(\Sigma'',  \phi^*_+ \circ \phi'')$ is an open book for $(-1)$--surgery on $L^*_+$. 
Let $A'' = A \cap \Sigma''$, a four-punctured sphere. (Notice that $L'$ and $L^0_+$ are isotopic in $A''$ to two of the components of $\bdry A''$.  The other two boundary components come from the stabilizations.)  Observe that $\phi^*_+$ is the identity outside of $A''$ and within $A''$ may be represented as in Figure~\ref{fig:lantern}(b).  By the lantern relation, we may express $\phi^*_+$ as $\tau_\gamma \circ \tau_\beta \circ \tau^{-1}_{L'}$ as shown in Figure~\ref{fig:lantern}(c) where $\gamma$ and $\beta$ are the two curves in the middle. (Only one choice of which is which is correct for the order of composition of Dehn twists, but our proof does not require this detail.)  Thus $(\Sigma'', \tau_\gamma \circ \tau_\beta \circ \tau^{-1}_{L'} \circ \phi'')$ is an open book for $(-1)$--surgery on $L^*_+$.  

Let $(M', \xi')$ denote the result of $(+1)$--surgery on $L'$ in $(M,\xi)$, and observe that $(\Sigma, \tau^{-1}_{L'} \circ \phi)$ is an open book for this manifold.  Hence $(\Sigma'', \tau^{-1}_{L'} \circ \phi'')$ is an open book for $(M',\xi')\#(S^1 \times S^2, \xi_{\std})\#(S^1 \times S^2, \xi_{\std})$.  Since $(M',\xi')$ has non-zero Heegaard Floer contact invariant by assumption, so does $(M',\xi')\#(S^1 \times S^2, \xi_{\std})\#(S^1 \times S^2, \xi_{\std})$.  
Then adding positive Dehn twists along $\beta$ and $\gamma$ to the monodromy of the open book $(\Sigma'', \tau^{-1}_{L'} \circ \phi'')$ supporting this manifold results an open book supporting a contact structure that also has non-zero Heegaard Floer contact invariant by \cite{baldwin-comultiplication}.  Therefore, since  $(-1)$--surgery on $L^*_+$ is supported by the open book $(\Sigma'', \tau_\gamma \circ \tau_\beta \circ \tau^{-1}_{L'} \circ \phi'')$, the contact manifold is tight.  Consequentially $L^*_+$ is a non-loose knot in $(M_+,\xi_+)$ and $t_+(L^*) >1$.
\end{proof}  

\begin{figure}
\footnotesize
\centering
\psfrag{L}[][]{$L'$}
\psfrag{Lp}[][]{$L$}
\psfrag{Lpp}[][]{$L^0$}
\psfrag{SLp}[][]{$L^0_+$}
\psfrag{a}[][]{$a$}
\includegraphics[width=5in]{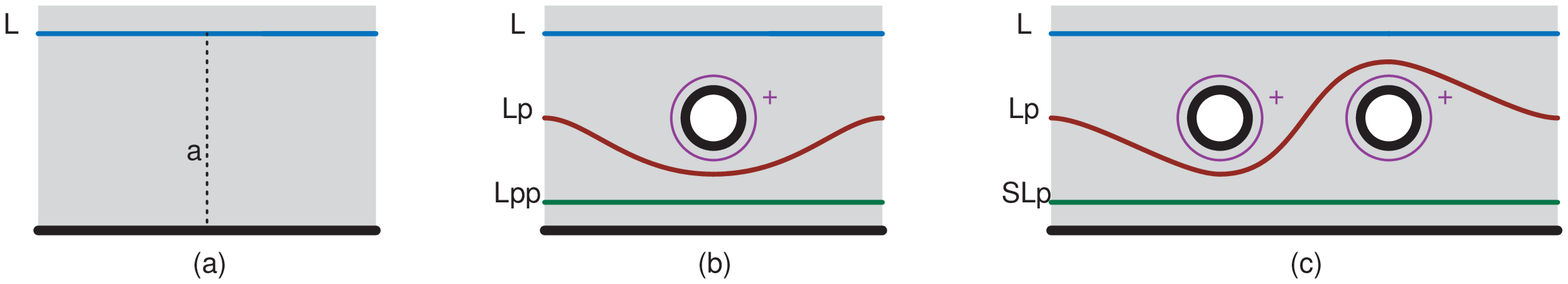}
\caption{}
\label{fig:abstrOB}
\end{figure}

\begin{figure}
\footnotesize
\centering
\psfrag{L}[][]{$L'$}
\psfrag{Lp}[][]{$L$}
\psfrag{Lpp}[][]{$L^0$}
\psfrag{SLp}[][]{$L^0_+$}
\includegraphics[width=5in]{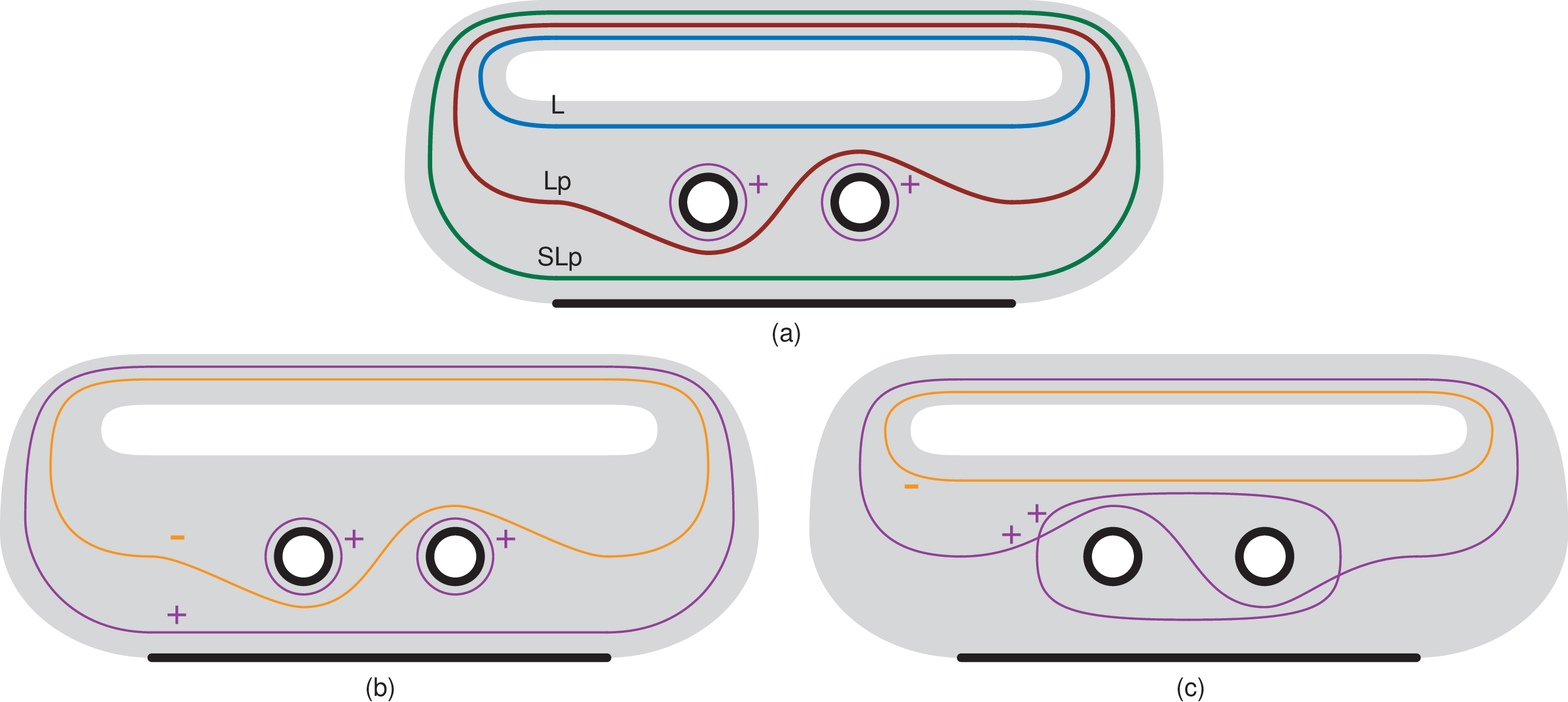}
\caption{}
\label{fig:lantern}
\end{figure}

\begin{example}\label{ex:negtension1}
Using Proposition~\ref{prop:tensionrefinement} one may construct many examples of non-loose Legendrian knots with $t_- = 1$ and $t_+>1$.

Let $(M,\xi)$ be a contact manifold resulting from $(-1)$--surgeries on each component of an oriented Legendrian link $L_0 \cup L_1 \cup \dots \cup L_k$ in $(S^3,\xi_\std)$, and let $L'$ the surgery dual knot to $L_0$.  Then $(+1)$--surgery on $L'$ is the result of a $(-1)$--surgeries on the sublink $L_1 \cup \dots \cup L_k$, a Stein fillable contact manifold which therefore has a non-zero Heegaard Floer contact invariant \cite{osctct}.  Now let $L$ be a positive stabilization of $L'$ in $(M,\xi)$.  Then let $(M_+,\xi_+)$ be the overtwisted manifold resulting from $(+1)$--surgery on $L$ and let $L^*$ be the surgery dual.   According to Proposition~\ref{prop:tensionrefinement}, $t_-(L^*)=1$ and $t_+(L^*)>1$.
\end{example}

\begin{example}\label{ex:unknots}
By Corollary~\ref{cor:nonlooseunknot}, a non-loose unknot has $(\tb,\rot) = (n, \pm(n-1))$ for some positive integer $n$.  
Therefore if $L$ is a non-loose unknot oriented so that $\rot(L) = 1-n$ with $n>0$, $t_-(L)=1$ and $t_+(L) \leq n$.  
If $n\geq 2$, then applying Proposition~\ref{prop:tensionrefinement} as described in Example~\ref{ex:negtension1} to the (local) surgery descriptions of Figure~3 of \cite{plamenevskaya} shows that $t_+(L) \geq 2$. Thus $t_\ort(L) = \{(1,0), (0,1)\}$ if $n=1$ and $t_\ort(L) = \{(0,1)\}$ if $n\geq2$.  
\end{example}

\begin{example}\label{ex:ratlunknots}
In \cite{sinemgeiges}, Geiges-Onaran extend Plamenevskaya's surgery descriptions of non-loose Legendrian unknots in $(S^3, \xi_{-1})$ to non-loose Legendrian rational unknots (i.e.\ knots with solid torus exterior) in overtwisted lens spaces.   They explicitly give surgery diagrams for such rational unknots in the lens spaces $L(p,1)$ for $p\in \N$ and in $L(5,2)$.  Following Example~\ref{ex:negtension1}, we may apply Proposition~\ref{prop:tensionrefinement} to show the non-loose knots described in Figure 6(b)(c), Figure 7(b)(c), and Figure 8(a)(b)(c) of \cite{sinemgeiges} may be oriented to have $t_- = 1$ and $t_+\geq 2$.
\end{example}

\subsection{Order and the LOSS invariant}\label{sec:loss}

Lisca-Ozsv\'ath-Stipsicz-Szab\'o define an invariant $\frL$ which is a $U$--torsion element in $HFK^-(-M,L)$ of an oriented, {\em null-homologous} Legendrian knot $L$ in an overtwisted contact manifold $(M,\xi)$.  As common, we often refer to $\frL$ as the {\em LOSS invariant}.  Lemmas 1.4 and 1.6 of \cite{LOSS} show that $\frL$ enjoys the following properties:
\begin{itemize}
\item If $L$ is loose, then $\frL(L) = 0$. 
\item If $L_-$ is a negative stabilization of $L$, then $\frL(L_-) = \frL(L)$.
\item If $L_+$ is a positive stabilization of $L$, then $\frL(L_+)=U \cdot \frL(L)$.
\end{itemize}

The following definition is suggested by Corollary~1.3 of \cite{LOSS}.
\begin{defn}
Let $L$ be an oriented, null-homologous Legendrian knot in an overtwisted manifold.
Define $o(L)$ to be the {\em order} of the $U$--torsion of $\frL(L)$. That is
\[o(L) = \min \{k\in \N \colon U^k \cdot \frL(L) = 0\}.\]
\end{defn}
\begin{defn}
Let $L$ be an unoriented, null-homologous Legendrian knot in an overtwisted contact structure.  Then the {\em order} of $L$ is $\bar{o}(L) = o(L) + o(-L)$, the sum of the orders of its two orientations (where an arbitrary orientation on $L$ is chosen for the calculation).
\end{defn}

\begin{lemma}\label{lem:stabLOSS}
Let $L$ be an oriented null-homologous Legendrian knot in an overtwisted contact structure.
Given a set of $a$ positive stabilizations and $b$ negative stabilizations such that $L_{+a,-b}$ is loose, then
\[ o(L) \leq a \mbox{ and } o(-L) \leq b.\]
\end{lemma}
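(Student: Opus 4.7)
The plan is to apply the three listed properties of the LOSS invariant directly and then symmetrize using the relation $(-L)_\pm = -(L_\mp)$ that was recorded in the preliminaries.

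First, since $L_{+a,-b}$ is loose by hypothesis, the first bulleted property gives $\frL(L_{+a,-b}) = 0$. Because the order and location of the stabilizations do not affect the resulting Legendrian isotopy class (noted just before Figure~\ref{fig:legstab}), we may apply the two stabilization rules iteratively in any convenient order: each of the $b$ negative stabilizations leaves $\frL$ unchanged, while each of the $a$ positive stabilizations multiplies $\frL$ by $U$. The net effect is
\[ 0 = \frL(L_{+a,-b}) = U^a \cdot \frL(L) \in HFK^-(-M,L). \]
By the definition of $o(L)$ as the minimum $k \in \N$ for which $U^k \cdot \frL(L) = 0$, this forces $o(L) \leq a$.

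For the second inequality, apply the same argument to the reversed orientation $-L$ in place of $L$, with the roles of positive and negative stabilizations swapped. Since $(-L)_\pm = -(L_\mp)$, the oriented Legendrian knot $(-L)_{+b,-a}$ is precisely $-(L_{+a,-b})$; looseness is an unoriented property, so $(-L)_{+b,-a}$ is loose. The same chain of equalities yields $U^b \cdot \frL(-L) = 0$, and hence $o(-L) \leq b$.

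There is essentially no obstacle here: the properties of $\frL$ were packaged exactly to make this sort of bookkeeping automatic. The only subtlety worth noting is the bookkeeping of signs under orientation reversal, which is handled cleanly by the identity $(-L)_\pm = -(L_\mp)$, so the proof reduces to writing down the two parallel computations.
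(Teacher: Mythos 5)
Your proof is correct and follows essentially the same route as the paper's: stabilize negatively $b$ times (leaving $\frL$ fixed), then positively $a$ times (each multiplying by $U$), conclude $U^a\cdot\frL(L)=0$ from looseness, and obtain the second inequality by reversing orientation. Your explicit use of $(-L)_\pm=-(L_\mp)$ just spells out the step the paper dispatches with ``follows similarly.''
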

\begin{proof}
The result of $b$ negative stabilizations of $L$ is $L_{-b}$, and $\frL(L_{-b}) = \frL(L)$.  Then $L_{+a,-b}$ is the result of $a$ positive stabilizations of $L_{-b}$.  Hence $\frL(L_{+a,-b}) = U^{a} \cdot \frL(L_{-b}) = U^{a} \cdot \frL(L)$.   Since $L_{+a,-b}$ is loose, $\frL(L_{+a,-b})=0$.  Therefore $o(L) \leq a$.  Reversing the orientation on $L$, the other result follows similarly.
\end{proof}

\begin{lemma}\label{lem:tensionLOSS}
If $L$ is a null-homologous Legendrian knot in an overtwisted contact structure then
\[\bar{o}(L)\leq t(L).\]
\end{lemma}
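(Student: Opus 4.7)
The plan is to derive this lemma as an immediate consequence of Lemma~\ref{lem:stabLOSS}, since that lemma already bounds $o(L)$ and $o(-L)$ individually by the numbers of positive and negative stabilizations in any loosening sequence.

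First, choose an arbitrary orientation on $L$ so that $o(L)$ and $o(-L)$ are both defined. Next, by the definition of tension, pick a stabilization sequence that loosens $L$ and uses exactly $t(L)$ stabilizations in total. Write $a$ for the number of positive stabilizations and $b$ for the number of negative stabilizations in this sequence, so that $a+b = t(L)$ and $L_{+a,-b}$ is loose. (Recall from the discussion of stabilizations earlier that the order in which these stabilizations are performed does not matter, so we may freely regroup them as $a$ positive followed by $b$ negative.)

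Now apply Lemma~\ref{lem:stabLOSS} to the loose knot $L_{+a,-b}$: it gives $o(L) \leq a$ and $o(-L) \leq b$ simultaneously. Adding these two inequalities yields
\[ \bar{o}(L) = o(L) + o(-L) \leq a + b = t(L), \]
which is the desired bound. No case analysis or further input is needed.

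There is essentially no obstacle here, since the content has been isolated into Lemma~\ref{lem:stabLOSS}. The only minor point worth noting is the mild asymmetry between $\bar{o}$ (which is intrinsically unoriented, since it sums contributions from both orientations) and $t$ (which is defined for unoriented knots and counts total stabilizations regardless of sign); the argument above reconciles them by using that the total count $a+b$ bounds the sum $o(L)+o(-L)$ of the two oriented orders.
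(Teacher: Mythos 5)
Your proof is correct and is exactly the paper's argument: the paper's proof of this lemma is the one-line instruction to apply Lemma~\ref{lem:stabLOSS} to a partition $a+b$ of $t(L)$ with $L_{+a,-b}$ loose, and you have simply written out that application and the summation explicitly. No differences in substance.
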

\begin{proof}
Apply Lemma~\ref{lem:stabLOSS} to any partition $a+b$ of $t(L)$ such that $L_{+a,-b}$ is loose.

\end{proof}

\begin{lemma}\label{lem:posorder}
Let $L$ be a non-loose, null-homologous, oriented Legendrian knot.

If $\frL(L) \neq 0$, then all its negative stabilizations are non-loose.  That is, $o(L)>0$ implies $t_-(L) = \infty$.  
Moreover, $L$ must intersect every overtwisted disk negatively. 
\end{lemma}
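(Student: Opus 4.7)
The plan is to handle the two assertions separately. The first follows directly from the behavior of $\frL$ under negative stabilization together with its vanishing on loose knots. Iterating $\frL(L_-) = \frL(L)$ gives $\frL(L_{-b}) = \frL(L)$ for every $b \geq 0$; since $\frL(L) \neq 0$ by hypothesis, none of the $L_{-b}$ can be loose. Hence no finite sequence of negative stabilizations loosens $L$, so $t_-(L) = \infty$, which is the claim under the equivalent assumption $o(L) > 0$.

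For the second claim, I would argue by contradiction. By Corollary~\ref{cor:passtosmallOT} and Proposition~\ref{prop:sot}, it suffices to treat standard overtwisted disks. Fix such a disk $D$, made transverse to $L$, and let $p$ and $n$ denote the numbers of positive and negative transverse intersections of the oriented knot $L$ with the oriented disk $D$ (with $D$ oriented so that $\rot(\bdry D) > 0$). The key input is Remark~\ref{rmk:stabsign}: negative stabilizations eliminate precisely the positive intersections with a standard overtwisted disk and leave the negative intersections untouched. Suppose, toward a contradiction, that $n = 0$. Then the procedure from the proof of Theorem~\ref{thm:stabtoloose}, applied with one negative stabilization near each of the $p$ positive intersection points, yields $L_{-p}$ disjoint from $D$ and therefore loose. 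This contradicts $t_-(L) = \infty$ already established in the first paragraph, so $n \geq 1$, i.e., $L$ meets $D$ in at least one negative intersection.

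The main subtlety is the sign bookkeeping in Remark~\ref{rmk:stabsign}, which depends on the orientation convention for overtwisted disks fixed in section~\ref{sec:otdisk}; once this convention is invoked, everything in the argument is routine. Note that the second assertion is in fact a geometric strengthening of the first: if some overtwisted disk $D$ met $L$ only positively, then $d(L)$ many negative stabilizations would already loosen $L$, giving $t_-(L) \leq d(L) < \infty$.
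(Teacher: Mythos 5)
Your proof is correct and follows essentially the same route as the paper's: the first claim via the invariance of $\frL$ under negative stabilization together with its vanishing on loose knots, and the second via Remark~\ref{rmk:stabsign}, noting that a disk met only positively could be cleared by negative stabilizations alone, contradicting $t_-(L)=\infty$. Your version just spells out the bookkeeping (and the reduction to standard overtwisted disks) that the paper leaves implicit.
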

\begin{proof}
Since negative stabilizations do not change $\frL$, the first statement holds.  
Since positive intersections of an oriented Legendrian knot with an overtwisted disk may be removed by negative stabilizations (Remark~\ref{rmk:stabsign}) and such a knot $L$ with $\frL(L) \neq 0$ cannot be loosened by only negative stabilizations,  there must be a negative intersection of $L$ with any overtwisted disk.
\end{proof}

\begin{example}
Let $\xi_n$ be the overtwisted contact structure on $S^3$ with Hopf invariant $h(\xi_n)=1-2n$.
For each $n \in \N$, Lemma~6.1 \cite{LOSS} gives a family of non-loose Legendrian representatives $L(n)$ of the $(2,2n-1)$--torus knots in $(S^3,\xi_n)$.  Together Proposition~6.2 and Remark~6.3 of \cite{LOSS} show that $\frL(L(n)) \neq 0$; hence $o(L(n))>0$ and, as in Lemma~\ref{lem:posorder}, $t_-(L(n))=\infty$. 

Since stabilizations decrease $\tb$, it follows that for each $n \in \N$ the values of $\tb$ of the non-loose Legendrian representatives of the $(2,2n+1)$--torus knot in $(S^3,\xi_n)$ are not bounded below.

 \end{example}

\begin{prop}\label{prop:mintbLOSS}
If a null-homologous knot type $\calK$ has a lower bound on the Thurston-Bennequin numbers of its non-loose Legendrian representatives in a given overtwisted manifold,  then $\frL(L) = 0$, $o(L)=0$, $\bar{o}(L)=0$, and $t_\pm(L) <\infty$ for each (oriented) Legendrian representative $L$. 
\end{prop}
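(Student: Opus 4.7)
My plan is to leverage two complementary facts: stabilization strictly decreases $\tb$, whereas the LOSS invariant $\frL$ is preserved by negative stabilization. Combined, these say that a Legendrian knot with $\frL\ne 0$ spawns an infinite tower of non-loose representatives $L_{-k}$ whose $\tb$ values march to $-\infty$, directly contradicting a lower bound on $\tb$ for non-loose representatives of $\calK$. Throughout, fix an oriented Legendrian representative $L$ of $\calK$ and let $B$ denote the assumed lower bound; the loose case is trivial (both $\frL$ and tension are built to vanish there), so assume $L$ is non-loose.

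For the tension statements $t_\pm(L)<\infty$, I would simply observe that each stabilization preserves the topological knot type and reduces $\tb$ by one, so once $k>\tb(L)-B$ neither $L_{+k}$ nor $L_{-k}$ can be non-loose without contradicting $B$. Hence both must eventually be loose, giving $t_\pm(L)\le \tb(L)-B+1$.

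To show $\frL(L)=0$ I would argue by contradiction: if $\frL(L)\ne 0$, then $\frL(L_{-k})=\frL(L)\ne 0$ for all $k\ge 0$, so by Lemma~\ref{lem:posorder} each $L_{-k}$ is non-loose --- an infinite family of non-loose Legendrian representatives of $\calK$ with arbitrarily negative $\tb$, contradicting the lower bound $B$. This forces $\frL(L)=0$ and hence $o(L)=0$. To obtain $\bar{o}(L)=0$ I would then rerun the argument with the reversed orientation $-L$ in place of $L$, using $\tb(-L)=\tb(L)$ so that the bound $B$ still applies, and observing that $-L$ and all its stabilizations continue to represent the unoriented knot type $\calK$.

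I do not anticipate a substantive obstacle: the argument is essentially a monotonicity-versus-invariance pigeonhole pairing the strict decrease of $\tb$ under stabilization with the invariance of $\frL$ under \emph{negative} stabilization. The one point that requires bookkeeping care is the asymmetry built into the LOSS framework --- positive stabilization multiplies $\frL$ by $U$ while negative stabilization leaves it alone --- so to kill both summands of $\bar{o}(L)=o(L)+o(-L)$ one must explicitly treat the two orientations, which is exactly where the identity $\tb(-L)=\tb(L)$ is what keeps the hypothesis in force.
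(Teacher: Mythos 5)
Your proposal is correct and follows essentially the same route as the paper: stabilize until $\tb$ drops below the assumed bound, conclude the result is loose (giving $t_\pm(L)<\infty$), and use invariance of $\frL$ under negative stabilization together with the vanishing of $\frL$ on loose knots to kill $\frL(L)$, $o(L)$, and (via the reversed orientation) $\bar{o}(L)$. The only cosmetic difference is that you phrase the $\frL$ step as a contradiction while the paper argues directly.
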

\begin{proof}
Given any oriented Legendrian representative $L$ of $\calK$, sufficiently many negative stabilizations will produce a representative $L'$ with $\tb(L')$ less that the assumed lower bound.  Hence $L'$ must be loose.  Thus $t_-(L) < \infty$, $\frL(L) = \frL(L') =0$, and so  $o(L)=0$.  A similar argument shows $t_+(L)<\infty$, $o(-L)=0$, and thus $\bar{o}(L) = 0$.
\end{proof}

\begin{cor}\label{cor:unknotorder}
Any Legendrian unknot $L$ in an overtwisted contact structure has $\bar{o}(L) = 0$.
\end{cor}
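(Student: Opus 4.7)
The proof should be essentially a one-line application of Proposition~\ref{prop:mintbLOSS} (\texttt{thmF}) once the hypotheses are verified, so the plan is to verify those hypotheses for the unknot. First, I would observe that the unknot bounds an embedded disk and is therefore null-homologous, making the LOSS invariant and hence $o$, $\bar{o}$ defined for its Legendrian representatives.

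Next, I would establish the required lower bound on the Thurston-Bennequin number of non-loose Legendrian unknots. This is immediate from Theorem~\ref{thm:EF}, which asserts that any Legendrian unknot in an overtwisted contact manifold with $\tb(L) \leq 0$ is loose. Equivalently (contrapositively), every non-loose Legendrian unknot satisfies $\tb(L) \geq 1$, uniformly across all overtwisted contact manifolds. Alternatively one could quote Corollary~\ref{cor:nonlooseunknot}, which pins down the classical invariants of a non-loose unknot as $(\tb,\rot)=(n,\pm(n-1))$ for some $n\in\N_{>0}$ and so also produces the bound $\tb\geq 1$.

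Having established that the knot type $\calK$ of the unknot has its Thurston-Bennequin numbers of non-loose Legendrian representatives bounded below by $1$ in every overtwisted contact structure, I would invoke Proposition~\ref{prop:mintbLOSS} to conclude $\bar{o}(L)=0$ for every Legendrian unknot $L$. Note that loose Legendrian unknots automatically have $\bar{o}=0$ by the vanishing property of the LOSS invariant under looseness, so the proposition handles the only non-trivial case.

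There is essentially no obstacle here: the content has already been packaged in Theorem~\ref{thm:EF} and Proposition~\ref{prop:mintbLOSS}, and the corollary amounts to citing them in sequence. The only mild care needed is to confirm that the statement of Proposition~\ref{prop:mintbLOSS} applies ``in a given overtwisted contact structure'' — which it does for any fixed ambient overtwisted $(M,\xi)$ containing the Legendrian unknot $L$, since the $\tb\geq 1$ bound from Theorem~\ref{thm:EF} holds within each such $(M,\xi)$ independently.
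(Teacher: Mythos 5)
Your proposal is correct and matches the paper's argument exactly: the paper's proof is the one-liner ``By Theorem~\ref{thm:EF}, any non-loose Legendrian unknot has $\tb>0$,'' which is precisely the lower-bound verification you describe, with Proposition~\ref{prop:mintbLOSS} (stated immediately before the corollary) doing the rest. Your added remarks about null-homologousness and the loose case being trivial are fine but left implicit in the paper.
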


\begin{proof}
By Theorem~\ref{thm:EF}, any non-loose  Legendrian unknot has $\tb > 0$.
\end{proof}

\begin{remark}\label{rem:sivek}
For any Legendrian knot $L$ in a contact manifold $(M,\xi)$, Sivek defines a monopole Floer invariant $\ell_g(L) \in KHM(-M,L)$ for each integer $g\geq 2$ \cite{sivek} which one may care to compare with the Heegaard Floer LOSS invariant $\frL(L)\in HFK^-(M,L)$.
These invariants $\ell_g$ are all $0$ if either the complement of $L$ is overtwisted or $L=L'_{+1,-1}$ for some other Legendrian knot $L'$ with any orientation.  As such, while these $\ell_g$ may be able to detect non-looseness, they appear to be less suited for bounding tension.  

We caution the reader that Proposition~5.6 of \cite{sivek} contains an error in the penultimate sentence of its proof.  This also impacts Corollary~5.7.  Furthermore the knots constructed in the example following Corollary~5.7 and illustrated in Figure~7 of that article are in fact all loose since their surgery descriptions involve $(+1)$--surgery on a stabilized trefoil.  Consequentially, examples of non-loose Legendrian knots in overtwisted manifolds with non-zero $\ell_g$ have yet to be produced.
\end{remark}

\section{Depth, Tension, and Order for Transverse Knots}\label{sec:trans}
\subsection{Transverse knots and their Legendrian approximations}
Recall the following key facts from section~\ref{sec:transverse}.  Throughout, all knots are oriented.
\begin{itemize}
\item Any two Legendrian approximations of a transverse knot are related by negative stabilizations.
\item The transverse push-off of a Legendrian approximation is equivalent to the original transverse knot.
\item Any invariant of Legendrian knots that is unaltered by negative stabilization is an invariant of transverse knots.
\end{itemize}
Indeed, a transverse knot has only one kind of stabilization.   It may be viewed as the transverse push-off of a positive stabilization of a Legendrian approximation of the original transverse knot.

We define the invariants {\em depth}, {\em tension}, and {\em order} for transverse knots in the obvious manner. Corollary~\ref{cor:finiteloosen}(1) below shows that tension is well-defined.  Since the LOSS invariant $\frL$ of a null-homologous Legendrian knot is unaltered by negative stabilization, the order of a null-homologous transverse knot is defined to be the order of any Legendrian approximation.

\begin{prop}\label{prop:looseLeghasloosepushoff}
Let $L$ be a Legendrian knot in an overtwisted contact structure.
Then $L$ is a Legendrian approximation of a non-loose transverse knot if and only if $t_-(L)=\infty$.
Also $t_-(L) < \infty$ if and only if the transverse push-off of $L$ is loose.
\end{prop}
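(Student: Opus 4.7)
The two statements in the proposition are essentially contrapositives of one another once we observe the following. Any Legendrian knot $L$ is tautologically a Legendrian approximation of its own transverse push-off $T_+(L)$, and conversely if $L$ is a Legendrian approximation of some transverse knot $T$, then $T$ is equivalent to $T_+(L)$ (by the second bullet of Section~\ref{sec:trans}). So ``$L$ is a Legendrian approximation of a non-loose transverse knot'' means exactly that $T_+(L)$ is non-loose. Granting this reformulation, the two claims are logical negations, and it suffices to prove the second: $t_-(L) < \infty$ if and only if $T_+(L)$ is loose.

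The plan is to argue both directions directly using the two structural facts recalled at the start of Section~\ref{sec:trans}: (i) negative stabilization leaves the transverse push-off unchanged, and (ii) any two Legendrian approximations of a transverse knot have common negative stabilizations. For the forward direction, assume $L_{-a}$ is loose with overtwisted disk $D$ in its complement. Because $T_+(L_{-a}) = T_+(L)$ and the transverse push-off may be chosen to lie in a standard neighborhood of $L_{-a}$ disjoint from $D$, the push-off $T_+(L)$ is itself disjoint from $D$, hence loose.

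For the reverse direction, suppose $T = T_+(L)$ is loose, with overtwisted disk $D$ disjoint from $T$. I would shrink a standard cylindrical neighborhood $N$ of $T$ so that $N \cap D = \emptyset$, and pick a Legendrian approximation $L'$ of $T$ on a concentric torus inside $N$; by construction $L'$ is disjoint from $D$, so $L'$ is loose. Fact (ii) then produces $a,b \geq 0$ with $L_{-a}$ Legendrian isotopic to $L'_{-b}$. Since stabilizations can be arranged inside Darboux balls disjoint from $D$, the disk $D$ survives as an overtwisted disk in the complement of $L'_{-b}$ and hence of $L_{-a}$, giving $t_-(L) \leq a < \infty$.

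The argument is conceptually straightforward; the only mild technical point is the ability to shrink standard neighborhoods of $T$ (respectively position transverse push-offs of $L_{-a}$) so they stay inside the tight region away from $D$, and to verify that stabilizing a loose Legendrian knot inside a small ball disjoint from an overtwisted disk preserves looseness. Both are routine consequences of the local model for stabilization and the flexibility of standard neighborhoods, so I do not expect a serious obstacle.
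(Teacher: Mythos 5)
Your proposal is correct and follows essentially the same route as the paper: reduce the first statement to the second via the fact that push-offs and Legendrian approximations are inverse up to negative stabilization, get the forward direction from $T_+(L_{-a})=T_+(L)$ sitting in a small neighborhood of the loose $L_{-a}$, and get the reverse direction by taking a Legendrian approximation inside a neighborhood of $T$ disjoint from the overtwisted disk and invoking common negative stabilizations. Your write-up just makes explicit a couple of routine points (shrinking neighborhoods, stabilizing in Darboux balls away from the disk) that the paper leaves implicit.
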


\begin{remark}
This sharpens Proposition 1.2 of \cite{etnyreovertwisted} by clarifying when a transverse push-off of a non-loose Legendrian knot is loose.
\end{remark}

\begin{proof}
Since transverse push-offs and Legendrian approximations are inverse operations up to negative stabilizations, the two statements of the proposition are equivalent.  We'll prove the second.  First assume $L$ is a Legendrian knot with $t_-(L)<\infty$.  Then $t_-(L)$ negative stabilizations to $L$ produces a loose Legendrian knot $L'$.   Since $L$ and $L'$ have the same transverse push-off, it must be loose.

Now assume the transverse push-off $T$ of $L$ is loose. Then there is an overtwisted disk disjoint from a neighborhood of $T$.  Hence some Legendrian approximation $L'$ of $T$ (taken within this neighborhood of $T$) must be loose.  But since $L$ and $L'$ must have common negative stabilizations, some negative stabilization of $L$ is loose.  Thus $t_-(L)< \infty$.

\end{proof}

\begin{example}
Example~\ref{ex:negtension1} gives a procedure to create examples of Legendrian knots with $t_- = 1$.  Proposition~\ref{prop:looseLeghasloosepushoff} implies each of these knots has a loose transverse push-off.

\end{example}

\begin{remark}
Lemma~\ref{lem:posorder} shows that a non-loose, null-homologous oriented Legendrian knot $L$ with $\frL(L) \neq 0$ (and hence $o(L)>0$) implies $t_-(L)=\infty$.  Proposition~\ref{prop:looseLeghasloosepushoff} then implies the transverse push-off of $L$ is non-loose.  That $\frL(L) \neq 0$ implies $t_-(L)=\infty$ and the transverse push-off of $L$ is non-loose  is effectively the content of the end of the proof of Corollary~7.3 of \cite{LOSS}.
\end{remark}

Proposition~\ref{prop:looseLeghasloosepushoff} has several corollaries.

\begin{cor}\label{cor:finiteloosen}
\quad
\begin{enumerate}
\item There is a finite sequence of stabilizations that loosens a non-loose transverse knot. 
\item Positive stabilizations are required to loosen any Legendrian approximation of a non-loose transverse knot. 
\item If the transverse push-off of a (rationally) null-homologous Legendrian knot is non-loose, then there is no lower bound on the (rational) Thurston-Bennequin number for non-loose Legendrian representatives of this knot type. 
\item A transverse unknot in an overtwisted contact structure is loose.  \cite[Corollary 2.3]{etnyreovertwisted}
 \end{enumerate}
\end{cor}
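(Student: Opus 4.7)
The plan is to derive all four statements as consequences of Proposition~\ref{prop:looseLeghasloosepushoff} together with Theorem~\ref{thm:stabtoloose}, Theorem~\ref{thm:EF}, and the basic dictionary between transverse knots and their Legendrian approximations.

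For (1), let $T$ be a non-loose transverse knot and $L$ a Legendrian approximation.  Theorem~\ref{thm:stabtoloose} produces some finite sequence of stabilizations making $L_{+a,-b}$ loose.  Because negative Legendrian stabilizations do not alter the transverse push-off, the push-off of $L_{+a,-b}$ agrees with the push-off of $L_{+a}$, which is the $a$-fold transverse stabilization of $T$.  Hence $a$ transverse stabilizations of $T$ produce a loose transverse knot.  Part (2) is immediate from the contrapositive of the second statement of Proposition~\ref{prop:looseLeghasloosepushoff}: if $T$ is non-loose then every Legendrian approximation $L$ has $t_-(L)=\infty$, so negative stabilizations alone never loosen it.

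For (3), suppose the transverse push-off $T$ of $L$ is non-loose.  For each $k\geq 0$, the negative stabilization $L_{-k}$ is again a Legendrian approximation of $T$ (any two Legendrian approximations of $T$ share a common sequence of negative stabilizations).  By Proposition~\ref{prop:looseLeghasloosepushoff} each $L_{-k}$ is non-loose.  In the null-homologous case $\tb(L_{-k})=\tb(L)-k$, and in the rationally null-homologous case the analogous formula holds for $\tb_\Q$, so the Thurston-Bennequin number (resp.\ $\tb_\Q$) of non-loose Legendrian representatives of the knot type is unbounded below.

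For (4), any transverse unknot $T$ has a Legendrian approximation that is itself an unknot.  Theorem~\ref{thm:EF} says every Legendrian unknot with $\tb\leq 0$ is loose, so the Thurston-Bennequin numbers of non-loose Legendrian unknots are bounded below (by $1$).  Applying the contrapositive of part~(3) to the knot type of the unknot then forces the transverse push-off of any Legendrian unknot to be loose, and hence $T$ itself is loose.  The only obstacle to this outline is a small verification that the Legendrian approximations $L_{-k}$ used in (3) really do converge to $T$ as Legendrian approximations in the sense of section~\ref{sec:transverse}, but this is exactly the standard fact that all Legendrian approximations of a transverse knot share a common negative stabilization, recorded in that section.
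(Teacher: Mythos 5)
Your proposal is correct and takes essentially the same route as the paper: each part is deduced from Proposition~\ref{prop:looseLeghasloosepushoff} together with Theorem~\ref{thm:stabtoloose} for (1), the fact that negative stabilizations preserve non-looseness while decreasing $\tb$ (resp.\ $\tb_\Q$) for (3), and Theorem~\ref{thm:EF} for (4). The only cosmetic difference is that the paper also cites Proposition~\ref{prop:mintbLOSS} as an alternative justification in (3), which your direct argument renders unnecessary.
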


\begin{proof}
(1) By Theorem~\ref{thm:stabtoloose} a Legendrian approximation of a transverse knot may be loosened by a finite sequence of stabilizations.   Then the transverse push-off of this loosened Legendrian approximation is loose by Proposition~\ref{prop:looseLeghasloosepushoff}.  The positive stabilizations used in loosening the Legendrian approximation correspond to the loosening stabilizations of the original transverse knot.

(2) This is immediate. 

(3) Negative stabilizations must remain non-loose, but each such stabilization decreases the (rational) Thurston-Bennequin number.  Indeed, by Proposition~\ref{prop:mintbLOSS}, if there were a lower bound then $t_- < \infty$ for any Legendrian approximation.

(4) If not, then by (3) there would be no lower bound on $\tb$ for non-loose Legendrian unknot, contrary to Theorem~\ref{thm:EF}.
\end{proof}

\begin{lemma}
If $T$ is a non-loose transverse knot and $L$ is a Legendrian approximation, then $t(T) \leq t(L)$.

\end{lemma}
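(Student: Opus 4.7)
The plan is to unpack the definitions and exploit Proposition~\ref{prop:looseLeghasloosepushoff} together with the fact that a transverse stabilization of $T$ has, as a Legendrian approximation, a positive stabilization of any Legendrian approximation of $T$.

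First I would fix an orientation on $L$ (consistent with that of $T$) and choose a decomposition $t(L) = a + b$ with $a$ positive stabilizations and $b$ negative stabilizations such that $L_{+a,-b}$ is loose; such a decomposition exists by definition of tension. Next I would observe that $L_{+a,-b}$ is itself a Legendrian approximation of the $a$-fold transverse stabilization $T_{+a}$ of $T$: indeed, taking positive Legendrian stabilizations corresponds to transverse stabilization of the push-off, while negative Legendrian stabilizations do not change the transverse push-off, so the transverse push-off of $L_{+a,-b}$ is precisely $T_{+a}$.

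Since $L_{+a,-b}$ is loose, Proposition~\ref{prop:looseLeghasloosepushoff} (the second statement: $t_-$ finite iff the transverse push-off is loose, or more directly, a loose Legendrian knot has loose transverse push-off) implies that $T_{+a}$ is loose. By the definition of transverse tension, this yields $t(T) \leq a \leq a+b = t(L)$, completing the proof.

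The main (minor) obstacle is simply making sure the dictionary between Legendrian and transverse stabilizations is applied correctly: one must be careful that negative Legendrian stabilizations genuinely preserve the transverse push-off (so that $L_{+a,-b}$ really is a Legendrian approximation of $T_{+a}$, not of some other transverse knot), and that the $a$ positive stabilizations correspond to $a$ transverse stabilizations of $T$ rather than some different number. Both facts are recorded in Section~\ref{sec:transverse} and at the start of Section~\ref{sec:trans}, so the proof is short.
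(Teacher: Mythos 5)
Your proof is correct and follows essentially the same route as the paper's: both arguments use the decomposition $t(L)=a+b$ into positive and negative stabilizations, note that the negative ones leave the transverse push-off unchanged while the $a$ positive ones correspond to $a$ transverse stabilizations of $T$, and conclude $t(T)\leq a\leq t(L)$. Your write-up is somewhat more careful in justifying that $L_{+a,-b}$ is a Legendrian approximation of $T_{+a}$ and in citing Proposition~\ref{prop:looseLeghasloosepushoff} for the looseness of the push-off, but the underlying argument is identical.
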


\begin{proof}
Every stabilization of a transverse knot corresponds to a positive stabilization of its Legendrian approximations.  If  $L$ is loosened by  $p$ positive stabilizations and $n$ negative stabilizations so that $t(L) = p+n$, then $t(T) \leq p \leq t(L)$.
\end{proof}

\begin{lemma}
 Assume a Legendrian approximation $L$ of a non-loose transverse knot $T$ requires at least one positive stabilization and at least one negative stabilization to loosen.   Then $0< t(T) < t(L)$.
\end{lemma}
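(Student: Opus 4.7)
The plan is to sharpen the inequality $t(T) \leq t(L)$ from the previous lemma, extracting strictness from the hypothesis that $L$ cannot be loosened without at least one negative stabilization.

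First, I would note that $0 < t(T)$ is immediate: since $T$ is non-loose by hypothesis, at least one transverse stabilization is required to loosen it, so $t(T) \geq 1$.

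For the strict inequality $t(T) < t(L)$, I would select non-negative integers $(a_0, b_0)$ with $a_0 + b_0 = t(L)$ and $L_{+a_0,-b_0}$ loose. By the standing hypothesis, any loosening of $L$ must involve a negative stabilization, so $b_0 \geq 1$. I would then invoke the defining correspondence from the beginning of this section: the transverse push-off of $L_{+a_0}$ is the $a_0$-fold transverse stabilization $T^{(a_0)}$ of $T$, and the subsequent $b_0$ negative Legendrian stabilizations leave the transverse push-off unchanged. Hence $L_{+a_0,-b_0}$ is itself a Legendrian approximation of $T^{(a_0)}$. Since $L_{+a_0,-b_0}$ is loose, Proposition~\ref{prop:looseLeghasloosepushoff} (applied via $t_-(L_{+a_0,-b_0}) = 0 < \infty$) yields that $T^{(a_0)}$ is loose. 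Therefore $t(T) \leq a_0 = t(L) - b_0 \leq t(L) - 1$, giving the desired strict inequality.

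There is no serious obstacle here; the argument is essentially a bookkeeping refinement of the preceding lemma. The only small point to double-check is that a loose Legendrian knot has a loose transverse push-off, which is precisely the direction of Proposition~\ref{prop:looseLeghasloosepushoff} being invoked. The conceptual content is that the $b_0 \geq 1$ negative stabilizations counted in $t(L)$ are ``wasted'' from the transverse point of view, so they reduce $t(T)$ below $t(L)$ by exactly the number of negative stabilizations used.
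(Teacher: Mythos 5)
Your proof is correct and follows essentially the same route as the paper's (much terser) argument: the negative stabilizations in a minimal loosening of $L$ do not change the transverse push-off, so $T$ is loosened by the positive stabilizations alone, which number strictly fewer than $t(L)$. The paper leaves the bookkeeping implicit, whereas you spell out the reduction via $T^{(a_0)}$ and Proposition~\ref{prop:looseLeghasloosepushoff}, but the underlying idea is identical.
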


\begin{proof}
As $T$ is transversally isotopic to the transverse push-off of all negative stabilizations of $L$, it is loosened by only the positive stabilizations.
\end{proof}

\subsection{Bindings of overtwisted open books}
An open book induces a contact structure in which the binding is naturally a transverse link.

\begin{lemma}\label{lem:nonloosebinding}
If a knot $T$ is the connected binding of an open book decomposition that supports an overtwisted contact structure, then $T$ is a non-loose transverse knot in that contact structure.
\end{lemma}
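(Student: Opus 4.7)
The plan is to show that the complement $M \setminus T$ is tight, so that $T$ is non-loose by definition. I will do this by analyzing the Reeb flow of a supporting contact form on the exterior of a small tubular neighborhood of the binding, and invoking Hofer's theorem on the existence of contractible periodic Reeb orbits in overtwisted contact manifolds.

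First, I would fix a contact form $\alpha$ supporting the open book $(\Sigma,\phi)$, so that $\alpha$ restricts positively to the oriented binding $T$ and $d\alpha$ is positive on the interior of every page. Using Giroux's correspondence, $\alpha$ can be isotoped through supporting forms so that it takes the standard transverse-knot model $\alpha = f(r)\,dz + g(r)\,d\theta$ on a tubular neighborhood $\nu(T) \cong S^1_z \times D^2_{r,\theta}$ while simultaneously agreeing, on the complement $M_0 := M \setminus \Int \nu(T)$, with the Thurston--Winkelnkemper form $\lambda + K\,dt$ associated to $(\Sigma,\phi)$. Here $M_0$ is diffeomorphic to the mapping torus $\Sigma_\phi$, $\lambda$ is a Liouville form on the page, $K$ is a sufficiently large constant, and the Reeb vector field is $\partial_t$ up to a positive function.

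Next, I would observe that every closed orbit of this Reeb flow wraps some positive integer number of times around the base $S^1$ of the fibration $\Sigma_\phi \to S^1$, and so has nontrivial image in $\pi_1(S^1) = \Z$. In particular, every closed Reeb orbit in $M_0$ is non-contractible. Hofer's theorem (in the appropriate version for manifolds with convex boundary, where $\partial M_0$ is a pre-Lagrangian torus) then gives tightness of $(M_0, \xi|_{M_0})$. Any overtwisted disk in $M \setminus T$ is compact and therefore lies inside $M_0$ for sufficiently small $\nu(T)$, contradicting tightness of $M_0$. Hence $M \setminus T$ is tight and $T$ is non-loose.

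The principal obstacle is the simultaneous standardization in the first step: isotoping a supporting contact form to match the transverse-knot model near $T$ while becoming the Thurston--Winkelnkemper form on $M_0$. This is a standard consequence of the Giroux correspondence but is technical, and it is where the hypothesis that the open book supports $\xi$ is fully used. A subsidiary point is the applicability of Hofer's theorem on the manifold-with-boundary $M_0$; if desired one can bypass this by doubling $M_0$ along $\partial M_0 = T^2$, extending the Thurston--Winkelnkemper form to the double, and verifying that the doubled Reeb flow still has only non-contractible closed orbits, thereby reducing to the closed version of Hofer's theorem.
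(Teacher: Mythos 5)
Your overall strategy --- make the Reeb field of an adapted contact form positively transverse to the pages, observe that every closed Reeb orbit in the binding complement then has positive winding around the base of the fibration and hence is non-contractible there, and conclude tightness from Hofer --- is a genuinely different route from the paper, which simply invokes the stronger fact that the binding complement of any open book is \emph{universally} tight, citing the proof of Lemma~3.1 of Etnyre--Vela-Vick. The standardization step and the non-contractibility of closed orbits in $M_0$ are fine. The gap is the step you yourself flag as "subsidiary": the application of Hofer's theorem. Hofer's theorem is a statement about \emph{closed} contact $3$--manifolds, and neither of your proposed workarounds repairs this. There is no off-the-shelf version for a compact piece with pre-Lagrangian torus boundary: the Bishop family of holomorphic disks emanating from the elliptic point of the overtwisted disk lives in the symplectization and is not confined to $\R \times M_0$ without a pseudoconvexity condition at the boundary that a pre-Lagrangian torus (foliated by Reeb orbits) does not provide. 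The doubling trick also fails at the first step, since the reflection across $\partial M_0$ is orientation-reversing and does not carry a positive contact form to a positive contact form, so the "doubled Reeb flow" is not the Reeb flow of any contact form on the double.

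The gap is not cosmetic, because the obvious fallback --- apply Hofer to the closed manifold $M$ itself, using an overtwisted disk in $M\setminus T \subset M$ --- genuinely fails: the adapted contact form on $M$ \emph{does} have contractible closed Reeb orbits in general. The binding $T$ is itself a closed Reeb orbit, and it may well be contractible in $M$ (e.g.\ an unknotted binding in $S^3$, which is exactly the situation arising in Theorem~\ref{thm:depth1binding} and the example following it), and the closed orbits on the concentric tori in $\nu(T)$ can likewise be contractible in $M$. So Hofer applied to $M$ produces a contractible orbit with no contradiction; any proof along these lines must truly localize the conclusion to $M\setminus T$, and Hofer's argument does not localize. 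This is precisely why the literature proves tightness of binding complements by other means (passing to covers of the mapping torus, as in \cite{EVV}); to salvage your approach you would need to either carry out the holomorphic-disk analysis with a genuine confinement argument near $\partial M_0$, or replace the Hofer step with the covering-space argument, at which point you have reproduced the cited proof.
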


\begin{proof} 
More is actually true. The complement of the binding (connected or not) of an open book for any contact manifold is universally tight.  A proof of this fact appears in the proof of Lemma~3.1 of \cite{EVV}.
\end{proof}

Let $H^+$ and $H^-$ respectively denote the positive and negative Hopf bands in $S^3$.
The open book $(S^3, H^+)$ whose page is $H^+$ supports the tight contact structure on $S^3$.  The open book $(S^3, H^-)$ whose page is $H^-$ supports the overtwisted contact structure $\xi_{-1}$ (with Hopf invariant $-1$) on $S^3$.  
An open book is a {\em positive or negative Hopf stabilization} if it may be obtained by plumbing such a Hopf band onto another open book.

Also recall that if an open book $(M,\Sigma)$ supports the contact structure $\xi$, then $(M, \Sigma \sharp_\alpha H^+)$, the positive Hopf stabilization  obtained by plumbing a positive Hopf band onto the page $\Sigma$ of $(M,\Sigma)$ along an arc $\alpha \subset \Sigma$ also supports $\xi$ \cite{girouxopenbook}. In particular, if $\alpha$ is a boundary-parallel arc in $\Sigma$, $\Sigma \sharp_\alpha H^+$ is the boundary connected sum of the page $\Sigma$ and the band $H^+$.  Indeed, this plumbing may be positioned so that the supported contact structure is identical rather than merely isotopic to the original. Consequentially, the binding of the resulting open book in $\xi$ is the same transverse link $\bdry \Sigma$ with the addition of a transverse unknot encircling as a meridional curve the  component of $\bdry \Sigma$ along which the Hopf band was summed.  A local picture of this is illustrated in Figure~\ref{fig:hopfcircle}.

\begin{figure}
\centering
\psfrag{S}[][]{$\Sigma$}
\psfrag{P}[][]{$\Sigma \sharp H^+$}
\includegraphics[width=3.5in]{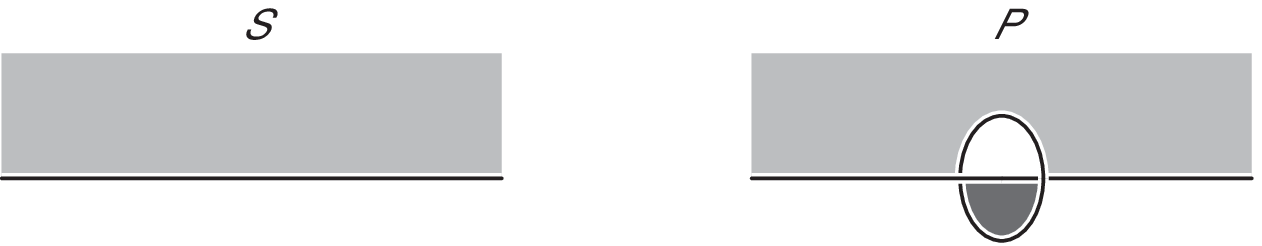} 
\caption{}
\label{fig:hopfcircle}
\end{figure}
 
\begin{lemma}\label{lem:twohopfs}
Let $H^- \sharp H^+$ denote the boundary connected sum of a negative and a positive Hopf band.
The open book $(S^3, H^- \sharp H^+)$ supports a contact structure containing an overtwisted disk $D$ such that $\bdry D$ is a $0$--framed loop in $H^- \sharp H^+$ that is parallel to one component of $\bdry (H^- \sharp H^+)$  and the interior of $D$ intersects each of the other two components just once.
\end{lemma}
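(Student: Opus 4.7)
The plan is as follows. The boundary connected sum $H^- \sharp H^+$ is a pair of pants; denote its three boundary components by $a_-$, $a_+$, and $c$, where $a_\pm$ is the boundary component of $H^\pm$ that is not involved in the plumbing operation, and $c$ is the component newly created by the band sum. Each of these is an unknot in $S^3$. Since $\lk$ of the two components of $\bdry H^\pm$ is $\pm 1$, and the plumbing band is untwisted and connects $b_-$ (the other boundary of $H^-$) to $b_+$ (the other boundary of $H^+$) in disjoint balls, additivity of linking numbers across the band gives $\lk(a_-, c) = -1$, $\lk(a_+, c) = +1$, and $\lk(a_-, a_+) = 0$.

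I would take $\gamma$ to be a loop in the interior of $H^- \sharp H^+$ parallel to $c$, which is the proposed boundary of $D$. Being parallel to $c$ in the pair of pants, $\gamma$ has the same page framing as $c$. I compute this framing via additivity under band sum: the component $b_-$ has framing $-1$ via $H^-$ (since pushing $b_-$ into $H^-$ yields a curve isotopic to $a_-$ there), the component $b_+$ has framing $+1$ via $H^+$, the plumbing band is untwisted, and $\lk(b_-, b_+) = 0$. Hence the framing of $c = b_- \sharp b_+$ via the pair of pants is $(-1) + (+1) = 0$. So $\gamma$ is a $0$-framed loop parallel to $c$ in the page.

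To construct the disk $D$, I start with a Seifert disk $D_c$ for the unknot $c$. Because $a_-$ and $a_+$ lie in disjoint balls (one inside each of $H^-$ and $H^+$) and link $c$ in absolute value exactly once, I can choose $D_c$ realizing the minimum geometric intersection of $1$ with each. Pushing $D_c$ slightly off $c$ within a collar neighborhood so as to bound $\gamma$ instead yields $D$; this small pushoff preserves the intersection pattern, so $D$ meets each of $a_-$, $a_+$ exactly once in its interior and is disjoint from $c$. To see $D$ is overtwisted, I Legendrian-realize $\gamma$ in the page via the Giroux / Honda--Kazez--Mati\'c construction, making its contact framing equal to the page framing, which is $0$. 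Since $\gamma$ is unknotted, its Seifert framing is $0$ as well, giving $\tb(\gamma) = 0$, so $D$ is an overtwisted disk with all the asserted properties.

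The main technical point is the framing computation for the band-sum boundary $c$, which relies on the additivity of Seifert framings under band sum of the Hopf band boundaries through an untwisted band; after that, the existence of $D$ meeting $a_\pm$ each once follows from realizing minimum geometric intersections for an unknot with each of two unknots that it links once, and the overtwisted-disk conclusion is immediate from the matching framings.
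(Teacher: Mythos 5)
Your proof is correct and follows essentially the same route as the paper: both identify the boundary-parallel curve on the page whose page framing is $0$, Legendrian-realize it on the (convex double of the) page so that $\tb=0$, and observe that the disk it bounds meets the other two boundary components once each. The only differences are that you replace the paper's appeal to its figure with an explicit linking-number/framing computation, and that you should note, as the paper does, that your curve --- although separating in the pair of pants --- is non-isolating (it cuts off no subsurface disjoint from the binding), which is the hypothesis the Legendrian realization principle actually requires.
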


\begin{figure}
\centering
\psfrag{S}[][]{$H^- \sharp H^+$}
\psfrag{D}[][]{$\bdry D$}
\includegraphics[width=3.5in]{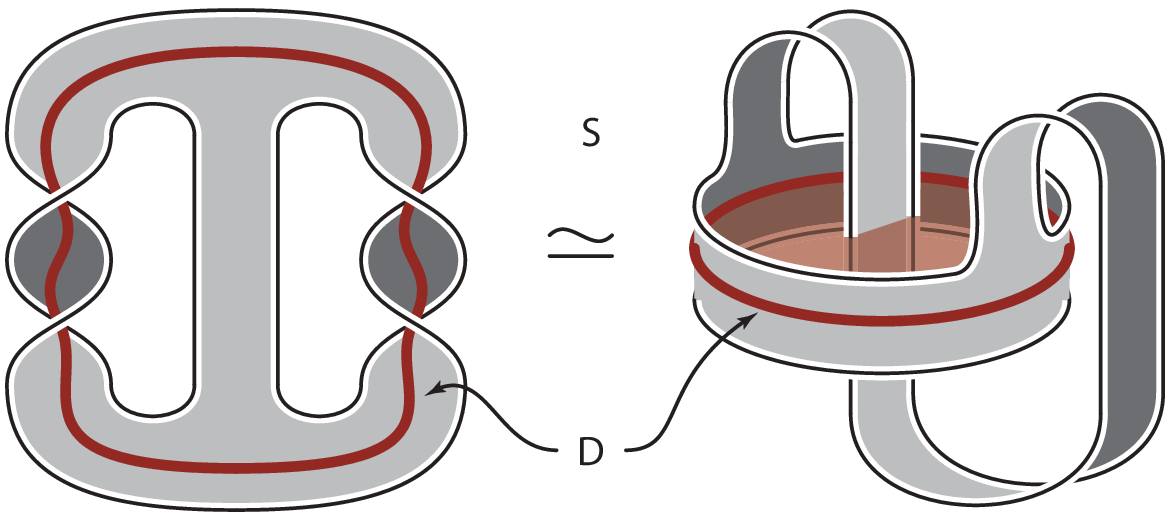} 
\caption{}
\label{fig:twohopfs}
\end{figure}

\begin{proof}
Figure~\ref{fig:twohopfs} shows the surface $H^- \sharp H^+$ with the curve $\bdry D$ on the left and an isotopic presentation of this surface together with the disk $D$ on the right.  Let $\Sigma$ be the union of $H^- \sharp H^+$ with another page of the open book which we may then assume is convex such that the binding is the dividing set \cite{torisu}.
Since $\bdry D$ is not the sole boundary component of a subsurface of $H^- \sharp H^+$ it may be realized as a Legendrian curve for which $\tw(\bdry D, \Sigma)$ equals its page-framing in the supported contact structure \cite{kanda, honda1}.  Since the page framing is the Seifert framing for $\bdry D$ and $\bdry D$ is disjoint from the dividing set, $\tb(\bdry D)=0$.  Hence $D$ is an overtwisted disk with the properties claimed.
\end{proof}

\begin{thm}\label{thm:depth1binding}   
Assume an open book with connected binding is a negative Hopf stabilization.  Then the binding $T$, as the non-loose transverse knot in the overtwisted contact structure the open book supports, has $d(T)=t(T)=1$.
\end{thm}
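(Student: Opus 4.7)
My plan is to show $d(T) = 1$, after which $t(T) = 1$ follows by combining the lower bound $t(T) \geq 1$ with the upper bound $t(T) \leq d(T)$. This upper bound is the transverse analog of Lemma~\ref{lem:stabboundsdepth}: for any Legendrian approximation $L$ of $T$ one has $d(L) = d(T)$ (by a small perturbation of any candidate overtwisted disk), so $t(T) \leq t(L) \leq d(L) = d(T)$. Non-looseness, and hence $d(T), t(T) \geq 1$, follows immediately from Lemma~\ref{lem:nonloosebinding}, which asserts that the complement of a binding is tight.

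For the upper bound $d(T) \leq 1$, I follow the template of Lemma~\ref{lem:twohopfs}. Since $(M,\Sigma,\phi)$ is a negative Hopf stabilization, $\Sigma$ contains an embedded negative Hopf band $H^-$. I then positive Hopf stabilize by plumbing a positive Hopf band $H^+$ along an arc $\alpha$ inside $H^-$ that runs parallel to the portion of $\bdry H^-$ lying on $T = \bdry \Sigma$. Because positive Hopf stabilization preserves the supported contact structure, the new open book $(M,\Sigma',\phi')$ still supports $(M,\xi)$. Its binding is $T_{new} \cup T_+$, where $T_+$ is the transverse meridional unknot of $T$ introduced by the stabilization and $T_{new}$ is transverse-isotopic to $T$. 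By the choice of $\alpha$, the subsurface $H^- \cup H^+ \subset \Sigma'$ is the pair of pants $H^- \sharp H^+$, with $T_+$ appearing as one of its three boundary components.

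The monodromy $\phi'$ restricted to $H^-\sharp H^+$ is $\tau_{c^-}^{-1}\circ\tau_{c^+}$, matching the local model of $(S^3,H^-\sharp H^+)$. Applying Lemma~\ref{lem:twohopfs} in this local model supplies a closed curve $\gamma \subset H^- \sharp H^+$ with page framing $0$ that is parallel on the page to one of the three boundary components of $H^- \sharp H^+$. Since $\gamma$ is disjoint from the dividing set (which on a page is the binding), it Legendrian-realizes with $\tb(\gamma)=0$. I arrange the plumbing so that this distinguished parallel component is $T_+$; then $\gamma$ is page-isotopic to $T_+$ and hence isotopic in $M$ to a meridian of $T$, so $\gamma$ bounds a meridional disk $D'$ of a tubular neighborhood of $T$ meeting $T$ exactly once. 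Since $\tb(\gamma)=0$, the disk $D'$ is overtwisted, giving $d(T)\leq 1$ as required.

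The main obstacle is that last calibration: making sure the plumbing can be set up so that the boundary component of $H^-\sharp H^+$ to which Lemma~\ref{lem:twohopfs}'s curve $\gamma$ is parallel coincides with $T_+$, rather than with either the untouched boundary circle of $H^-$ or the connected-sum boundary. This requires tracing through Figure~\ref{fig:twohopfs} to identify whether the distinguished component arises from $H^+$ or $H^-$ side, and then possibly adjusting the direction of the plumbing arc $\alpha$ or swapping the roles in the boundary connect sum to match $T_+$ against that component in the embedding $H^- \sharp H^+ \hookrightarrow \Sigma'$.
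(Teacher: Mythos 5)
Your setup is the paper's: non-looseness from Lemma~\ref{lem:nonloosebinding} gives the lower bounds, $t(T)\leq d(T)$ via a Legendrian approximation is fine, and positively stabilizing along a boundary-parallel arc at $\bdry H^-$ to create the local model $(S^3, H^-\sharp H^+)$ with new binding $T\cup T_+$ is exactly the paper's move. The gap is in the last step, and it is precisely the ``calibration'' you flag: it cannot be carried out. The $0$--framed boundary-parallel curve supplied by Lemma~\ref{lem:twohopfs} is necessarily parallel to the \emph{connected-sum} boundary component of $H^-\sharp H^+$ --- the one that traverses both bands, so that the $-1$ page framing contributed by $H^-$ cancels the $+1$ contributed by $H^+$. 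A curve parallel to the free boundary component of $H^+$ (which is $T_+$) only traverses the $H^+$ band and has page framing $\pm 1$ relative to its Seifert framing, so it does not Legendrian-realize to a $\tb=0$ unknot; the same holds for the free component of $H^-$. (This is also forced by the statement of Lemma~\ref{lem:twohopfs}: only the connected-sum component links each of the other two once, so only a curve parallel to it can bound a disk meeting both others.) Moreover, no choice of plumbing arc fixes this, since the boundary connect sum by construction joins $H^+$ to $\bdry H^-\subset T$, so the connected-sum component always ends up on $T$ and never on $T_+$. Hence your curve $\gamma$ is not isotopic to a meridian of $T$, and the meridional disk $D'$ does not exist as described.

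The repair is to abandon the meridional disk and use the disk $D$ of Lemma~\ref{lem:twohopfs} directly, which is what the paper does: arrange the plumbing of $(S^3, H^-\sharp H^+)$ onto $(M,\Sigma_0)$ along a rectangle in the $H^-$ part disjoint from $D$ and from the two interior intersection points. Then $D$ persists as an overtwisted disk (its boundary is still a non-isolating $0$--framed curve in the new page), and its interior meets the binding in exactly two points: one on the free boundary of $H^-$, which lies on $T$, and one on the free boundary of $H^+$, which is $T_+$. The intersection with $T_+$ is irrelevant for $d(T)$, so $|T\cap D|=1$ and $d(T)\leq 1$. In short, you do not need $\bdry D$ to be a meridian of $T$; you only need the disk it already bounds to meet $T$ once.
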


\begin{proof}
Let $(M,\Sigma)$ be a negatively Hopf stabilized open book for $M$ with page $\Sigma$ and connected binding $T=\bdry \Sigma$ that supports the contact structure $\xi$. By Lemma~\ref{lem:nonloosebinding}, $T$ is a non-loose transverse knot and thus $d(T)>0$.   By assumption $(M,\Sigma) = (M, \Sigma_0 \sharp_\beta H^-)$, the plumbing of the negative Hopf band onto the open book $(M,\Sigma_0)$ along an arc $\beta \subset \Sigma_0$.   Plumb the positive Hopf band onto $(M,\Sigma)$ along a boundary parallel arc $\alpha \subset \Sigma$ to form the open book $(M, \Sigma \sharp_\alpha H^+)$; that is, take the boundary connected sum of $\Sigma$ with $H^+$.  Since this is a positive stabilization of the original open book $(M,\Sigma)$, it supports the same contact structure $\xi$.  As discussed above, the binding of $(M, \Sigma \sharp_\alpha H^+)$ is the link $T \cup \mu$ consisting of the transverse knot $T$ and a transverse meridian (as pictured in Figure~\ref{fig:hopfcircle}).

We now find an overtwisted disk whose interior is intersected twice by $T \cup \mu$, once by each.  This will exhibit a depth $1$ overtwisted disk for $T$.  
The boundary connect sum of $H^+$ with $\Sigma$ may be done at a point on $\bdry H^-$ disjoint form $\Sigma_0$.  
Then we may view $(M, \Sigma \sharp_\alpha H^+)$ as the plumbing of $(S^3, H^- \sharp H^+)$ onto $(M,\Sigma_0)$ along a rectangle in $H^- \sharp H^+$ disjoint from $H^+$. With this plumbing, the overtwisted disk $D$ of Lemma~\ref{lem:twohopfs} carries over to an overtwisted disk in $(M, \Sigma \sharp_\alpha H^+)$ where its boundary is a $0$--framed curve in the page and its interior intersects each $T$ and $\mu$ just once.  Hence $d(T)=1$.
\end{proof}

\begin{remark}
Depth (and tension) can be defined for links.
The proof of Theorem~\ref{thm:depth1binding} easily extends to negatively Hopf stabilized open books with bindings of multiple components, showing that there is an overtwisted disk intersected just once by the binding.  As such, Theorem~\ref{thm:depth1binding} may be viewed as offering an obstruction to an open book being a negative Hopf stabilization.
\end{remark}

\begin{example}
Let $T_1 \cup T_2 = \bdry H^-$ be the transverse link arising as the binding of the open book $(S^3, H^-)$ in the supported overtwisted contact structure $\xi_{-1}$.  By Lemma~\ref{lem:nonloosebinding} the link is non-loose.  However since $T_1$ and $T_2$ are both unknots, individually they are loose transverse knots according to Corollary~\ref{cor:finiteloosen}(4).  This implies that $T_1$ intersects every overtwisted disk that is disjoint from $T_2$ and $T_2$ intersects every overtwisted disk that is disjoint from $T_1$.  Theorem~\ref{thm:depth1binding}, extended for disconnected binding, implies $d(T_1 \cup T_2)=1$.
\end{example} 

\begin{remark}
Ito-Kawamuro use their theory of open book foliations to view overtwistedness of a contact structure through a ``transverse overtwisted disk'' with respect to an open book supporting the contact structure \cite{itokeiko1, itokeiko2}.  Such disks will give an upper bound on the depth of the binding of the open book; see Proposition 4.2~\cite{itokeiko1}.
\end{remark}



\section{Problems and Questions} \label{sec:prob}

\begin{problem}
Develop constructions of knots of large depth or tension.
\end{problem}

Topological operations on knots and links often have Legendrian analogues.  It is natural to ask how our invariants of depth and tension behave under these operations.  Two fundamental operations are Legendrian Whitehead doubles \cite{fuchs,ngtraynor, etnyrelegandtrans} and Legendrian cables \cite{EHcable,dinggeigescable}. While some sources only explicitly define these operations for Legendrian knots in $(\R^3,\xi_\std)$, their definitions other contact manifolds are straightforward.

\begin{figure}[h!]
\footnotesize
\centering
\psfrag{p}[][]{$p$}
\psfrag{q}[][]{$q$}
\psfrag{n}[][]{$n$}
\psfrag{C}[][]{$C_{p,q}$}
\psfrag{c}[][]{$C_{p,-q}$}
\psfrag{W}[][]{$W_n$}
\psfrag{w}[][]{$W_{-n}$}
\includegraphics[width=4in]{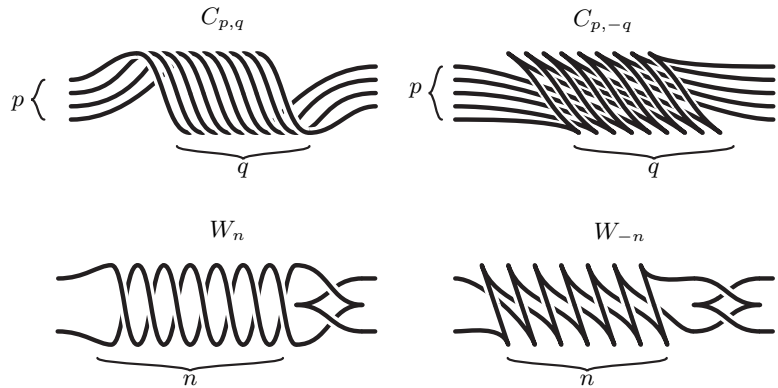}
\caption{Legendrian cables and twisted Whitehead doubles for the $x$--axis in the front projection, modulo translation.}
\label{fig:cablewhiteheadLeg}
\end{figure}

For coprime integers $p,q$ with $p>0$, let $C_{p,q}(L)$ denote the Legendrian $(p,q)$--cable of the Legendrian knot $L$.  For an integer $n$, let $W_n(L)$ denote the $n$--twisted Legendrian Whitehead double of the Legendrian knot $L$.  These satellites, taken in a standard contact solid torus neighborhood of $L$, are illustrated in Figure~\ref{fig:cablewhiteheadLeg}. 
\begin{question}
For a non-loose Legendrian knot $L$, how are the depth, tension, and order of $L$ and $C_{p,q}(L)$ related?  In particular, given coprime positive integers $p$ and $q$, does $d(C_{p,q}(L)) = p\cdot d(L)$?  When is $C_{p,q}(L)$ loose?
\end{question}

\begin{question}
For a non-loose Legendrian knot $L$, how are the depth, tension, and order of $L$ and $W_n(L)$ related?  In particular, if $n\geq0$ does $d(W_{n}(L)) = 2\cdot d(L)$?  When is $W_n(L)$ loose?
\end{question}

\begin{problem}
Construct explicit examples of Legendrian knots $L$ with $d(L)=2$.  
\end{problem}
Of course Theorems~\ref{thm:bigdepth} and \ref{thm:depth2} as well as Remark~\ref{rem:depth2constructions} should be kept in mind.  The previous two questions suggest investigating the cables $C_{2,q}(L^*)$ and Whitehead doubles $W_n(L^*)$ of the $(+1)$--surgery duals to a stabilized knot.

\begin{question}
Do the $(+1)$--surgery duals to the Legendrian knots in $(S^3,\xi_\std)$ having a local configuration as in Figure~1 of \cite{liscastipsicz} all have depth at most $2$?  In particular, do the surgery duals to $(+1)$--surgery on negative torus knots with maximal Thurston-Bennequin number all have depth $2$?
\end{question}
Lisca-Stipcisz show that $(+1)$--contact surgery on these knots produce an overtwisted contact structure by constructing a once-punctured torus with Legendrian boundary in the surgered manifold that violates the Bennequin-type inequality for tight contact structures.  Being duals to knots in a tight manifold, they are necessarily non-loose.  At the very least one should be able to determine an upper bound on depth for all such knots (such as in the discussion preceding Theorem~\ref{thm:depth2}).

\medskip

\begin{problem}
Study the discrepancy between depth and tension.
\begin{enumerate}
\item  How big can $d-t$ be among Legendrian knots? 
\item Does every knot type have an upper bound on $d-t$ for its Legendrian representatives in each overtwisted contact structure?
\item Is there a null-homologous Legendrian knot with $d-t>0$?
\end{enumerate}
While the proof of Theorem~\ref{thm:tensionlessthandepth} only exhibits {\em rationally} null-homologous Legendrian knots with $d-t>0$, there should be null-homologous examples.
Indeed, we expect that every overtwisted manifold contains non-loose Legendrian knots for which this difference can be arbitrarily large, though not within a single knot type.
\end{problem}

\begin{question} 
Can both the positive and negative tension of a non-loose Legendrian knot be large?
\begin{enumerate}
\item Is there a non-loose Legendrian knot $L$ for which $t_+(L)$ and $t_-(L)$ are both bigger than $t(L)$?  That is, is there a non-loose Legendrian which requires both a positive and a negative stabilization to realize its tension?  
\item Are there non-loose Legendrian knots with both $t_+ = \infty$ and $t_-=\infty$?  A Legendrian Whitehead double of a knot with $t_-=\infty$ seems like a likely candidate. 
\end{enumerate}
\end{question}

\begin{problem}
Determine the overtwisted contact structures and knot types for which there is a lower bound on the Thurston-Bennequin number of the non-loose Legendrian representatives.  

If such a knot type is rationally null-homologous, then Corollary~\ref{cor:finiteloosen}(3) with Proposition~\ref{prop:looseLeghasloosepushoff} implies that every transverse representative is loose and $t_- < \infty$ for each Legendrian representative.  If it is actually null-homologous, then by Proposition~\ref{prop:mintbLOSS} $\frL=0$ and $o=0$ as well.
\end{problem}

\begin{question}
If an open book with connected binding $K$ supports an overtwisted contact structure, must $t(K)=1$?

We expect the answer to be no.  For comparison, Theorem~\ref{thm:depth1binding} says that $t(K)=d(K)=1$ whenever the open book admits a negative stabilization. 
\end{question} 

\begin{problem}
Extend the LOSS invariant to rationally null-homologous knots, and study the order of rationally null-homologous non-loose knots.
\end{problem}

\begin{question}
Are there non-loose transverse knots for which every Legendrian approximation has $\frL=0$?
\end{question}

\begin{problem}
Find a non-loose Legendrian knot with non-zero $\ell_g$; see Remark~\ref{rem:sivek}. The non-loose unknots in $(S^3, \xi_{-1})$ are reasonable candidates.
\end{problem}

Recall that we have been working under the assumption that our overtwisted manifolds are closed.  The definitions of depth and tension clearly extend to overtwisted manifolds with boundary in which some overtwisted disk is contained in the interior of the manifold.

 However, if every overtwisted disk in an overtwisted contact manifold with boundary were properly embedded, then stabilizations could not loosen a non-loose knot.  Indeed 
Vela-Vick pointed out that Theorem~\ref{thm:stabtoloose} fails if $(V,\xi)$ is the contact solid torus $V= \{(r,\theta,z) \vert r \leq \pi\}/(z \mapsto z+1)$ with $\xi = \ker(\cos r \, dz + r \sin r \, d\theta)$.  Any Legendrian knot in $V$ not contained in a ball is necessarily non-loose.

\begin{problem}
Characterize overtwisted manifolds with boundary in which every overtwisted disk is properly embedded. 
\end{problem}

\bibliographystyle{amsalpha}
\bibliography{nonloose}

\end{document}